\definecolor{myblue}{rgb}{0.21, 0.34, 0.74}
\definecolor{mygrey}{rgb}{0.55, 0.57, 0.67}
\definecolor{myred}{rgb}{0.79, 0.0, 0.09}
\definecolor{mygreen}{rgb}{0, 0.5, 0}
\theoremstyle{plain}
\newtheorem{theorem}{Theorem}[section]
\newtheorem{definition}[theorem]{Definition}
\newtheorem{proposition}[theorem]{Proposition}
\newtheorem{lemma}[theorem]{Lemma}
\newtheorem{remark}[theorem]{Remark}
\newcommand{\supp}{\mathrm{supp}}
\newcommand{\argmax}{\mathop{\mathrm{arg\,max}}\limits}
\newcommand{\argmin}{\mathop{\mathrm{arg\,min}}\limits}
\newcommand{\R}{\mathbb{R}}
\newcommand{\diff}{\, \mathrm{d}}
\newcommand{\dist}{\mathrm{dist}}
\newcommand{\dive}{\mathrm{div}}
\newcommand{\cP}{\mathscr{P}}
\renewcommand{\leq}{\leqslant}
\renewcommand{\geq}{\geqslant}
\numberwithin{equation}{section}
\definecolor{color1}{RGB}{255,230,230} 
\definecolor{color2}{RGB}{230,255,230} 
\definecolor{color3}{RGB}{230,230,255} 
\definecolor{nodecolor}{RGB}{180,220,240} 
\definecolor{mycolor1}{RGB}{255,200,200} 
\definecolor{mycolor2}{RGB}{200,255,200} 
\definecolor{mycolor3}{RGB}{200,200,255} 
\newcommand\blfootnote[1]{%
  \begingroup
  \renewcommand\thefootnote{}\footnote{#1}%
  \addtocounter{footnote}{-1}%
  \endgroup
}
\title{A Total Variation Flow Scheme for Ergodic Mean Field Games
}
\author{ Dante Kalise}
\affil{Imperial College London}
\author{Alessio Oliviero}
\affil{Politecnico di Milano}
\author{Domènec Ruiz-Balet}
\affil{Universitat de Barcelona}
\begin{document}
\maketitle

\begin{abstract}
    Motivated by recent developments in mean field games in ecology, in this paper we introduce a connection between the best response dynamics in evolutionary game theory, the minimization of the highest income of a game, and minimizing movement schemes. The aim of this work is to develop a variational approach to compute solutions of first order ergodic mean field games that may not possess a priori a variational structure. The study is complemented by a discussion and successful implementation of numerical algorithms, and comparisons between them in a variety of cases.
\end{abstract}

\blfootnote{\noindent \textbf{Acknowledgments:} D. Kalise is partially supported by the EPSRC Standard Grant EP/T024429/1.
 
  \noindent A. Oliviero was partially supported by the European Union -- Next Generation EU, Mission 4, Component 1, CUP 351: B83C22003530006, and is a member of INdAM-GNCS.

 \noindent D. Ruiz-Balet was funded by the UK Engineering and Physical Sciences Research Council (EPSRC) grant
EP/T024429/1. D. Ruiz-Balet also acknowledges the KTK-Sprint Challenge Grant number I44057.}

\tableofcontents

\section{Introduction}

Mean field games (MFGs) \cite{lasry2007mean, huang2006large} provide an effective modelling framework in macroeconomics, finance, crowd motion, power grids, and ecology. In this article, we use the standard forward-backward notation
\begin{equation}\label{eq: time.dep.mfg}
    \begin{cases}
        -\partial_t v(x,t) - \frac12 \|\nabla v(x,t)\|^2= \theta[m](x,t),\quad &(x,t)\in \mathbb{T}\times (0,T),\\
        \partial_t m(x,t) + \dive\left(m(x,t)\,\nabla v(x,t)\right)=0,\quad &(x,t)\in\mathbb{T}\times (0,T),\\
        v(x,T)=G(x,m(x,T)),\quad m(x,0)=m_0(x)\,, \quad & x\in \mathbb{T,}
    \end{cases}
\end{equation}
to refer to a first-order MFG on the $d$-dimensional torus $\mathbb{T}$, where $\theta:\mathscr{P}(\mathbb{T})\times \mathbb{T}\times (0,T)\to \mathbb{R}$ and $G:\mathbb{T}\times \mathscr{P}(\mathbb{T}) \to \R$. Solutions of the PDE system above represent Nash equilibria of a game played by infinitely many negligible players, each one maximising the reward
\begin{equation*}
    \int_0^T \left( \theta[m](x(t),t) - \frac{1}{2}|\alpha(t)|^2 \right)\ dt\,, \qquad\text{subject to}\qquad
    \begin{cases} \dot{x}(t)=\alpha(t), \\ x(0)=x_0. \end{cases}
\end{equation*}

In some situations, such as in \cite{cardaliaguet2013long}, it is known that solutions of \eqref{eq: time.dep.mfg} ``converge'' (see \cite{cardaliaguet2013long, bardi2024long} for the precise notion), for $T \to +\infty$, to solutions of the stationary PDE system
\begin{equation}\label{eq: intro.emfg}
    \begin{cases}
        \lambda - \frac12 \|\nabla u(x)\|^2= \theta[m](x),\quad &x\in\mathbb{T},\\
        -\dive\left(m(x)\nabla u (x)\right)=0,\quad &x\in\mathbb{T},\\
        \int_\mathbb{T} m(x)\ dx=1,\quad \int_\mathbb{T} u(x)\ dx=0,
    \end{cases}
\end{equation}
known as ergodic mean field game.

In this article, we are interested in developing new numerical methods for the solution of ergodic mean field games of the type \eqref{eq: intro.emfg}. Namely, similar to the seminal work of Jordan, Kinderlehrer and Otto \cite{jordan1998variational} and the minimizing movement schemes of De Giorgi \cite{de1993new,ambrosio2005gradient}, we can characterise the solutions of \eqref{eq: intro.emfg} via flows induced by a proximal gradient with a $\mathsf{TV}$ regularization. The stationary points of such flows are solutions of the ergodic mean field game. We employ this variational structure as a numerical method to compute solutions of \eqref{eq: intro.emfg} for certain choices of the coupling $\theta$.

In the literature, one can find a class of MFGs that possesses a natural variational structure, which is quite different from the point of view of this article. Variational mean field games \cite{benamou2017variational} arise whenever the system (both the evolutionary and the stationary one) can be rewritten as a gradient flow.  In this article, we rather develop an approach that sees a best response update (to be defined later) as a flow in the space of probability measures and as such, we are able to apply it to a wider class of mean field games. More specifically, we are interested on games coming from ecological applications. A more in-depth comparison with variational MFGs will be conducted in \Cref{sec: comparison}.

\subsection{Harvesting mean field games}

We will develop most of our results in the case where, for any fixed $m\in \mathscr{P}(\mathbb{T})$, the pay-off function $\theta$ is the solution of the linear elliptic PDE
\begin{equation}\label{eq: linear}
    - \Delta \theta(x) +P(x)\theta(x) =f(x)-m(x), \qquad x\in \mathbb{T},
\end{equation}
where $P,f\in L^\infty(\mathbb{T})$, $f(x),P(x)\geq 0$ and both not identically zero. Properties of such equation with measure data are proven later in \Cref{lem: basic.properties} in the one-dimensional case. Even if the results are proved for linear models, we will perform numerical tests also for more general non-linear equations of the form
\begin{equation}\label{eq: elliptic_nonlinear}
-\Delta \theta(x)= f(x,\theta(x))-m(x)\,\theta(x), \quad x \in \mathbb{T}.
\end{equation}
This PDE models a harvesting term with a bilinear structure, where the higher the density of players is in a point, the higher the decrease on $\theta$. We refer to \cite{kobeissi2023tragedy,kobeissi2024mean} for further modelling aspects. In \cite{kobeissi2023tragedy}, solutions in the form of travelling waves are found for the evolutionary model
\begin{equation}\label{eq: mfg}
    \begin{cases}
         -\partial_t v(x,t) - \frac12 \|\nabla v(x,t)\|^2= \theta[m](x,t)\quad &(x,t)\in \mathbb{T}\times (0,T),\\
        \partial_t \theta(x,t) -\Delta \theta(x,t)= f(x,\theta(x,t))-m(x,t)\theta(x,t)\quad &(x,t)\in \mathbb{T}\times (0,T),\\
        \partial_t m(x,t) + \dive\left(m(x,t)\,\nabla v(x,t)\right)=0\quad &(x,t)\in\mathbb{T}\times (0,T),\\
        v(x,T)=G(x,m(x,T)),\quad m(x,0)=m_0, \quad &x \in \mathbb{T}.
    \end{cases}
\end{equation}
These special solutions are able to capture the phenomenon called \emph{tragedy of the commons}: in the absence of players, for every $x$ the resources tend to $1$, i.e. $\lim_{t\to +\infty}\theta(x,t)=1$, whereas the special solution of the mean field game satisfies that, for every $x$, $\lim_{t\to +\infty}\theta(x,t)=0$. Moreover, it is found that the associated mean field control problem allows strategies that perform better without provoking an extinction of $\theta$. In \cite{kobeissi2024mean}, with some extra hypotheses, a well-posedness setting is derived, taking into account that \eqref{eq: mfg} depends also on all the previous history of $m(\cdot,s)$ for $0\leq s\leq t$. Furthermore, a convergence result to the corresponding ergodic MFG
\begin{equation}\label{eq: ergodic}
    \begin{cases}
         \lambda - \frac12 \|\nabla u(x)\|^2= \theta[m](x),\quad &x\in \mathbb{T},\\
          -\Delta \theta(x)= f(x,\theta(x))-m(x)\theta(x),\quad &x\in \mathbb{T},\\
        -\dive\left(m(x)\nabla u (x)\right)=0,\quad &x\in\mathbb{T},\\
        \int m(x)\ dx=1,\quad \int u(x)\ dx=0
    \end{cases}
\end{equation}
is derived. Clearly, in \eqref{eq: ergodic} the map $\mathscr{P}(\mathbb{T})\ni m\mapsto \theta[m]\in \mathscr{C}(\mathbb{T})$ is more involved than the map given by \eqref{eq: linear}, in the sense that the measure acts in a bilinear manner in the equation and the elliptic equation itself is non-linear. 

\subsection{Weak-KAM formula}

 The weak-KAM formula \cite{cardaliaguet2013long} provides a variational characterisation of the mean field Nash equilibrium. It states that the ergodic constant is given by
\begin{equation}
    \lambda=\min_{\eta\in E_m}\int_{\mathbb{T}\times \mathbb{R}^d}\frac{1}{2}v^2-\theta[m]\ d\eta,
\end{equation}
where
\begin{equation*}
    E_m:=\{\eta\in \mathscr{P}(\mathbb{T}\times \mathbb{R}^d):\text{ $\eta$ is invariant under \eqref{Eq:flow}}\}
\end{equation*}
and
\begin{equation}\label{Eq:flow}
\begin{cases}
    \frac{d}{dt}\dot{x}-D_x\theta[m](x)=0,\\
    x(0)=x,\quad \dot{x}(0)=v.
\end{cases}
\end{equation}
By looking at the structure of the problem, one deduces that $\eta^*=m^*(x)\otimes \delta_{v=0}$, hence
\begin{equation}
    \lambda=-\max_{m\in \mathscr{P}(\mathbb{T})}\int \theta[m]\ dm,
\end{equation}
which in turn implies that
\begin{equation}\label{eq: kam}
 \mathrm{supp}(m^*)\subset \argmax_{x \in \mathbb{T}} \theta[m](x).
\end{equation}
One expects, from the sign of $m$ (resp. $m\theta$) in the elliptic equation, that any measure that is a solution cannot have atoms \cite{kobeissi2023tragedy,kobeissi2024mean}, see \Cref{lem: ac}.
Reducing to absolutely continuous measures, \eqref{eq: kam} implies that the solution $\theta$ needs to have a plateau on its maximum. In particular,
$$
\nabla\theta=0=\Delta\theta \quad \text{on } \mathrm{supp}(m),
$$
from where, looking at \eqref{eq: linear} we can deduce the shape that the equilibrium measure $m$ should have:
\begin{equation}\label{eq: implications.NE}
    m(x)=f(x)-P(x)\,C, \quad  x \in \mathrm{supp}(m),
\end{equation}
for some positive constant $C$. Similarly, we get $m(x)=\frac{f(x,C)}{C}$ on $\mathrm{supp}(m)$ in the non-linear case. These observations will be at the core of \Cref{alg: main} and \Cref{alg: eikonal} in \Cref{sec: numerics}.

We can also interpret \eqref{eq: kam} as an alternative definition of mean field Nash equilibrium in this context.
\begin{definition} \label{def: nash_eq_harvest}
    We say that $m\in \mathscr{P}(\mathbb{T})$ is a mean field Nash equilibrium for the harvesting MFG \eqref{eq: intro.emfg}--\eqref{eq: linear} (or also \eqref{eq: intro.emfg}--\eqref{eq: elliptic_nonlinear}) if there is no player $x\in \supp(m)$ that can improve unilaterally their reward, i.e.
    \begin{equation}
        \theta[m](x)\geq \theta[m](y) \quad \text{for every } y\in \mathbb{T}.
    \end{equation}
Analogously, for any fixed $\tau>0$, we define a $\tau$-Nash equilibrium as any $m\in \mathscr{P}(\mathbb{T})$ such that, for every $x\in \supp(m)$,
\begin{equation}
        \theta[m](x)\geq \theta[m](y)-\tau \quad \text{for every } y\in \mathbb{T}.
    \end{equation}
\end{definition}

Due to numerical errors, all the simulations that will be shown in the following sections are indeed $\tau$-Nash equilibria, with $\tau$ depending on the spatial discretisation for solving \eqref{eq: linear} or \eqref{eq: elliptic_nonlinear} and the temporal discretisation of the $\mathsf{TV}$-flow chosen.

For general references on reaction-diffusion equations and population dynamics, we refer to~\cite{lam2022introduction, fife2013mathematical, cantrell2004spatial}. Also note that harvesting games with spatial structure have been considered in~\cite{bressan2013multidimensional, bressan2019competitive, mazari2022spatial}.

\subsection{Notation}
We denote the set of probability measures on the $d$-dimensional torus $\mathbb{T}$ by $\mathscr{P}(\mathbb{T})$, its subset of absolutely continuous (w.r.t. the Lebesgue measure) probability measures by $\mathscr{P}_{ac}(\mathbb{T})$, and the set of signed measures by $\mathscr{M}(\mathbb{T})$. We address the \emph{Total Variation} of $m\in \mathscr{M}(\mathbb{T})$ as
\begin{equation} \label{def:TV}
    |m|_{\mathsf{TV}(\mathbb{T})}=\int |m|(dx).
\end{equation}
 Furthermore, by $\mathscr{M}(\mathbb{T};r)$ we mean the set of all measures with $\mathsf{TV}$ norm less than $r$. Finally, the Wasserstein distance between $m_1,m_2\in\mathscr{P}(\mathbb{T})$ is denoted by
\begin{equation*}
    \mathsf{W}_1(m_1,m_2) := \inf_{\pi\in \Pi(m_1,m_2)}\int |x-y|\ \pi(dx,dy),
\end{equation*}
where by $\Pi(m_1,m_2)\subset \mathscr{P}(\mathbb{T}\times\mathbb{T})$ is the set of measures in the product space whose marginals are $m_1$ and $m_2$.

Throughout this work, whenever we refer to the weak convergence or weak compactness of probability measures, we mean it in the sense of the weak-$\ast$ topology (often referred to as the narrow topology in the probability literature), which is defined by duality with the space of continuous functions $\mathscr{C}(\mathbb{T})$.

\subsection{Structure of the article}
The remainder of this paper is structured as follows. 
In \Cref{sec: BRflows}, we introduce the concept of Best Response flow and two generalised minimising movement (GMM) schemes for the approximation of \eqref{eq: ergodic}.
In \Cref{sec:main_results}, we establish the theoretical framework, discussing the regularity of the solutions and convergence of the GMMs to the solution of the ergodic MFG system.
Finally, \Cref{sec: numerics} outlines the implementation of our algorithms, accompanied by empirical convergence tests and simulations for both linear and non-linear models in one- and two-dimensional domains.

\section{Best response flows}\label{sec: BRflows}

The fundamental idea for a numerical solution of \eqref{eq: ergodic} (and for its linear version) is to exploit the concept of best response dynamics. In evolutionary game theory, players update their strategy depending on the performance of the others' strategies or on the absolute best choice currently available. In our context, the strategy of a player is simply their physical position $x \in \mathbb{T}$. 

To understand how this approach naturally embeds the fundamental principles of mean field games, one must view the probability measure $m(x)$ not just as a mass density, but as the collective strategic distribution of infinitely many infinitesimal, perfectly selfish agents. The function $\theta[m](x)$ represents the localised payoff (e.g., the common resource in harvesting MFG) resulting from the current population's behaviour. By \Cref{def: nash_eq_harvest}, the system is only at rest when no agent can unilaterally improve their payoff. This implies that $\theta[m]$ must be constant across all populated regions, and no unpopulated region can offer a higher reward.

If the system is out of equilibrium, agents located in areas with a low payoff are incentivised to abandon their current position and move to the location offering the highest payoff: $\argmax \theta[m]$. Algorithmically, rather than tracking the physical, continuous trajectories of individual agents via the Hamilton--Jacobi--Bellman equation, we simulate this strategic update macroscopically. Relocating mass from a suboptimal region directly to the $\argmax \theta[m]$ is the Eulerian equivalent of a fraction of the population simultaneously and instantaneously adopting the best response.

It is known that such best response dynamics converge for potential games \cite{monderer1996potential}, a fact heavily exploited in variational MFGs (such as congestion games) \cite{benamou2017variational} to compute ergodic solutions. While our specific harvesting problem may not possess a classic variational structure, we mimic this evolutionary process by iteratively updating the players with the lowest incomes. We call this discrete mass-shifting dynamics a \textit{Best Response flow}. 

\subsection{Best Response algorithm}

To make this notion clearer, we introduce \Cref{alg:best_response}, which we will refer to as the Best Response algorithm. A more detailed version, specifying how we practically compute each step, will be provided in the numerical sections. \\
\begin{algorithm}[!h]
    \caption{Best response algorithm}
    \label{alg:best_response}
\begin{algorithmic}[1]
	\Require $\varepsilon > 0$, $m_0 \in \cP(\mathbb{T})$, $\tau >0$
    \State $k \gets 0$
    \State $R_k = \text{highest income possible} - \text{least income among players}$
    \While{$R_k > \tau$}
        \State \text{solve elliptic PDE to get } $\theta_k$
        \State \text{determine players with lowest income } $m_k^-$ \text{ s.t. } $\int m_k^- = \varepsilon$
        \State \text{relocate } $m_k^-$ \text{ in } $\argmax \theta_k$
        \State \text{update density } $m_{k+1}$
        \State \text{update } $R_{k+1}$
        \State $k \gets k+1$
    \EndWhile
\end{algorithmic}
\end{algorithm}

\noindent The key insight linking this algorithm to optimal transport theory is that we can understand this specific mass update as a generalised minimising movement (GMM) scheme \cite{ambrosio2005gradient, chambolle20231, jordan1998variational} of the form
\begin{equation}\label{eq: mm.BR}
m^{k+1}=\argmin_{m\in \mathscr{P}(\mathbb{T})}\left\{\|\theta[m]\|_{L^\infty(\mathbb{T})}-\min_{x\in\supp(m)}\theta[m](x)+\frac{1}{2\varepsilon}|m-m^k|_{\mathsf{TV}}^2\right\},
\end{equation}
where $|\cdot|_{\mathsf{TV}}$ is the Total Variation ($\mathsf{TV}$) norm defined in \eqref{def:TV} and $\theta[m]$ is the solution of the elliptic equation \eqref{eq: linear} or \eqref{eq: elliptic_nonlinear}.

\subsection{Eikonal-based algorithm}

A second approximation algorithm, alternative to \Cref{alg:best_response}, is based on geographical distance from the players who earn the most, instead of income difference. In order to do so, we only change Step 5 of \Cref{alg:best_response}, obtaining \Cref{alg:furthest_mass}.
\begin{algorithm}
    \caption{Eikonal-based algorithm}
    \label{alg:furthest_mass}
\begin{algorithmic}[1]
	\Require $\varepsilon > 0$, $m_0 \in \cP(\mathbb{T})$, $\tau >0$
    \State $k \gets 0$
    \State $R_k = \text{highest income possible}$
    \While{$R_k > \tau$}
        \State \text{solve elliptic PDE to get } $\theta_k$
        \State \text{determine furthest players from } $\argmax \theta_k,\ m_k^-,$ \text{ s.t. } $\int m_k^- = \varepsilon$
        \State \text{relocate } $m_k^-$ \text{ in } $\argmax \theta_k$
        \State \text{update density } $m_{k+1}$
        \State \text{update } $R_{k+1}$
        \State $k \gets k+1$
    \EndWhile
\end{algorithmic}
\end{algorithm} \\
Since we cannot say \emph{a priori} if $\argmax\theta_k$ is going to be a connected set at each step, we solve the eikonal equation 
\begin{equation} \label{eqn:eikonal}
    \begin{cases}
        \begin{aligned}     
            &|\nabla v_k| = 1, &\quad &\text{ in } \quad \mathbb{T}\setminus\argmax\theta_k, \\
            &v_k = 0, &\quad &\text{ on } \quad \partial\argmax\theta_k,
        \end{aligned}
    \end{cases}
\end{equation}
whose solution (in the viscosity sense) is known to be $v_k(x) = \dist(x,\argmax\theta_k),\ x \in \mathbb{T}\setminus\argmax\theta_k$. Under certain hypothesis on $\theta$, such as that the associated Green kernel $G(x,\cdot)$ to the elliptic equation \eqref{eq: linear} is a decreasing function with respect to $x$ (guaranteed by $P(x)\geq 0$), we can understand \Cref{alg:furthest_mass} as an approximation of the generalised minimising movement
\begin{equation}\label{eq: mm.GF}
    m^{k+1}=\argmin_{m\in \mathscr{P}(\mathbb{T})}\left\{\|\theta[m]\|_{L^\infty(\mathbb{T})}+\frac{1}{2\varepsilon}|m-m^k|_{\mathsf{TV}}^2\right\}.
\end{equation}

To clarify the link between the explicit steps of \Cref{alg:best_response} and \Cref{alg:furthest_mass} and their respective implicit GMM schemes \eqref{eq: mm.BR} and \eqref{eq: mm.GF}, one must consider the geometric nature of the Total Variation metric. Unlike Wasserstein metrics, which penalise the spatial distance that mass travels, the $\mathsf{TV}$ metric acts as an $L^1$ penalty on the density difference $|m^{k+1} - m^k|$. It penalises only the \emph{amount} of mass modified, perfectly allowing for spatial ``teleportation.'' 

In the Best Response scheme \eqref{eq: mm.BR}, the objective is to minimise the income gap $\max \theta - \min_{\supp(m)} \theta$. For a fixed $\mathsf{TV}$ budget of $\varepsilon$, the most efficient way to decrease the $-\min_{\supp(m)} \theta$ term is to completely strip away the mass from the worst-performing locations. To keep the distribution a probability measure, this mass must be relocated. Placing it precisely at the $\argmax \theta$ simultaneously provides those players with the highest possible income and, via the elliptic equation, suppresses the global peak, thereby minimising the objective. This is exactly the logic executed in Steps 4--6 of \Cref{alg:best_response}.

Conversely, the Eikonal-based scheme \eqref{eq: mm.GF} seeks only to minimise the global peak $\|\theta[m]\|_{L^\infty}$. To push the peak down optimally, mass must be added to $\argmax \theta$. However, to conserve total mass and respect the $\mathsf{TV}$ budget $\varepsilon$, an equivalent mass must be removed from elsewhere. Because the Green function $G(x,y)$ associated with the elliptic equation decays with distance, removing mass from a point $y$ causes an uplift in $\theta$ that is strongest near $y$ and weakest far away. Therefore, to minimise the collateral uplifting effect on the global peak, mass should be harvested from players situated as far away from the $\argmax \theta$ as possible. The eikonal equation \eqref{eqn:eikonal} strictly identifies these furthest geographic candidates, mirroring the implicit minimisation of \eqref{eq: mm.GF}. A more rigorous justification for this mass-shifting behaviour is formally proven later in \Cref{sec:main_results}.

\subsection{Comparison with variational MFG and other numerical approaches} \label{sec: comparison}

Variational mean field games \cite{benamou2017variational} are a type of typically time-dependent systems that can be seen as a gradient flow  for the Wasserstein-2 metric \cite{ambrosio2005gradient}. Assuming that the right hand side in \eqref{eq: time.dep.mfg} takes the form $\theta[m](x)=\theta(m(x))$,  considering $\Theta(m(x))=\int_0^{m(x)}\theta(s)\ ds$  and $G=0$, one can find an equivalence between the evolutionary MFG \eqref{eq: time.dep.mfg} and the minimisation of the action functional (via the Benamou-Brenier formula):
\begin{equation*}
    \max_{\alpha}\int_0^T\int_{\mathbb{T}} \Theta(m(x,t))-\frac{1}{2}|\alpha(x,t)|^2m(x,t) \ dx\, dt.
\end{equation*}
Moreover, this can be seen as a Jordan--Kinderlehrer--Otto (JKO) scheme of the form
\begin{equation*}
    m^{k+1}=\argmin_{m\in \mathscr{P}(\mathbb{T})}\left\{\int_\mathbb{T} - \Theta(m(x))\ dx + \frac{1}{2\varepsilon}\mathsf{W}_2(m,m^{k})^2\right\}.
\end{equation*}
The convergence results of the evolutionary mean field game to the ergodic one \cite{cardaliaguet2013long} guarantee that in long time the solution of the JKO scheme above will be close to the ergodic one. See also \cite{achdou2021mean, santambrogio2015optimal} for further details on variational MFG. We point out that, in contrast, the formulations \eqref{eq: mm.BR} and \eqref{eq: mm.GF} using the $\mathsf{TV}$ metric are valid even if $\theta[m]$ does not have a variational structure.

For numerical approximations of variational mean field games, our main reference is \cite{briceno2018proximal}. There are also other contributions in the literature exploiting numerics in the JKO scheme \cite{gallouet2017jko} and other JKO-type schemes \cite{liero2018optimal}. Finally, we mention \cite{chambolle20231}, where the authors consider an $L^1$ gradient flow using also a JKO-type scheme.

There is also a useful parallel with thresholding schemes for geometric flows. The works of Laux and Otto on the MBO scheme show how efficient discrete thresholding algorithms can be interpreted as minimizing movements and then analysed through De Giorgi energy-dissipation methods \cite{laux2016thresholding,laux2020degiorgi,laux2020brakke}; see also Laux's survey \cite{laux2018gradient}, the extension with bulk effects \cite{laux2017bulk}, and the Wasserstein-flow formulation of Mullins--Sekerka by Chambolle and Laux \cite{chambolle2021mullins}. Although these papers concern geometric flows rather than MFGs, they provide a useful analogue for our use of a simple mass-rearrangement rule as a structure-preserving minimizing-movement scheme. On the MFG side, fictitious play gives a closer best-response learning reference for potential MFGs \cite{cardaliaguet2017learning}; see also the recent overview \cite{graber2025remarks}.

Originally, the idea of congestion games and its variational approaches is far older than mean field games. For instance, the reader can see Rosenthal's original work \cite{rosenthal1973class}, or Monderer and Shapley \cite{monderer1996potential}. Furthermore, in \cite{bardi2024long}, the authors point to a global optimisation setting to find ergodic mean field games, employing the long-time convergence of time-dependent MFG to compute solutions (also known as the turnpike phenomenon, observed in optimal control \cite{geshkovski2022turnpike}). Similarly, we want to see ergodic systems in an optimisation context, but here we find the fixed point algorithmically via a $\mathsf{TV}$-flow.

\section{Theoretical framework}
\label{sec:main_results}

We now report some theoretical results on well-posedness of the problem, regularity of the solution and convergence of the generalised minimising movement (GMM) schemes to the solution of the MFG system. The analysis is restricted to the one-dimensional case, i.e. $\mathbb{T} = \mathbb{T}_1 = [0,1]$ with periodic boundary conditions.

\subsection{Main results}

\begin{theorem}\label{thm: main}
Let us consider $\theta$ satisfying either \eqref{eq: linear} or \eqref{eq: elliptic_nonlinear}. The following hold:
\begin{enumerate}
    \item For fixed $T>0$, there exists a sequence $\{\varepsilon_l\}_{l\in \mathbb{N}}$ such that $\varepsilon_l\to 0$, for which the scheme \eqref{eq: mm.GF} converges to  $m\in \mathscr{C}((0,T);\mathscr{P}(\mathbb{T}))$ in the following sense: given $m^k$ from \eqref{eq: mm.GF}, for every $t\in (0,T)$
    \begin{equation*}
        \mathsf{W}_1(m^{\lfloor\frac{t}{\varepsilon_l}\rfloor},m(t))\to 0\quad \text{ as }
        l\to +\infty.
    \end{equation*}
    \item Furthermore, if $\theta$ is the solution of the linear equation \eqref{eq: linear}, as $T\to +\infty$, $m(T)$ converges in $\mathsf{TV}$ to a solution of \eqref{eq: intro.emfg}.
\end{enumerate}
\end{theorem}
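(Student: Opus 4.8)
The plan is to read \eqref{eq: mm.GF} as a De Giorgi minimizing‑movement (proximal) scheme for the functional $\mathcal{F}(m):=\|\theta[m]\|_{L^\infty(\mathbb{T})}$ on $(\cP(\mathbb{T}),|\cdot|_{\mathsf{TV}})$: run the classical compactness argument for the first claim, and combine convexity with Lasry--Lions monotonicity for the second. For the first claim I would first note that the scheme is well posed: by \Cref{lem: basic.properties} the map $m\mapsto\theta[m]$ is continuous from $(\cP(\mathbb{T}),\mathsf{W}_1)$ to $\mathscr{C}(\mathbb{T})$, hence $\mathcal{F}$ is continuous, while $m\mapsto|m-m^k|_{\mathsf{TV}}$ is weak‑$*$ lower semicontinuous and $\cP(\mathbb{T})$ is weak‑$*$ compact, so a minimizer $m^{k+1}$ exists (uniqueness is irrelevant). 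Comparing $m^{k+1}$ with the competitor $m^k$ gives the discrete energy--dissipation inequality $\mathcal{F}(m^{k+1})+\tfrac{1}{2\varepsilon}|m^{k+1}-m^k|_{\mathsf{TV}}^2\le\mathcal{F}(m^k)$, hence $k\mapsto\mathcal{F}(m^k)$ is non‑increasing and, telescoping, $\sum_{k\ge0}\tfrac{1}{2\varepsilon}|m^{k+1}-m^k|_{\mathsf{TV}}^2\le\mathcal{F}(m^0)$. For the piecewise‑constant interpolant $m_\varepsilon(t):=m^{\lfloor t/\varepsilon\rfloor}$ and $s\le t$, Cauchy--Schwarz yields the $\varepsilon$‑uniform estimate
\begin{equation*}
|m_\varepsilon(t)-m_\varepsilon(s)|_{\mathsf{TV}}^2\le\big(\lfloor t/\varepsilon\rfloor-\lfloor s/\varepsilon\rfloor\big)\sum_{k}|m^{k+1}-m^k|_{\mathsf{TV}}^2\le 2\,\mathcal{F}(m^0)\,(t-s+\varepsilon).
\end{equation*}
Since $\mathsf{W}_1\le C\,|\cdot|_{\mathsf{TV}}$ on $\mathbb{T}$, the curves $m_\varepsilon:[0,T]\to(\cP(\mathbb{T}),\mathsf{W}_1)$ are asymptotically equicontinuous and valued in a fixed compact set, so a refined Arzelà--Ascoli argument handling the $O(\sqrt{\varepsilon})$ defect produces $\varepsilon_l\to0$ along which $m_{\varepsilon_l}\to m$ uniformly on $[0,T]$ with $m\in\mathscr{C}([0,T];\cP(\mathbb{T}))$; in particular $\mathsf{W}_1(m^{\lfloor t/\varepsilon_l\rfloor},m(t))\to0$ for every $t$, and a diagonal extraction over $T_n\uparrow\infty$ gives a curve on $(0,\infty)$. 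The only genuine care here is the Arzelà--Ascoli step; the rest is routine.

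For the second claim, for $\theta$ from \eqref{eq: linear} one has $\theta[m]=\theta_f-Gm$ with $G=(-\partial_{xx}+P)^{-1}$ a positive operator with symmetric kernel $G(x,y)>0$ peaked on the diagonal, so $\mathcal{F}(m)=\|\theta_f-Gm\|_{L^\infty(\mathbb{T})}$ is a norm composed with an affine map, hence \emph{convex} on $\cP(\mathbb{T})$. I would then use two structural facts. (i) Every global minimizer $m_*$ of $\mathcal{F}$ solves \eqref{eq: intro.emfg}: by the sign information of \Cref{lem: basic.properties} one reduces near $m_*$ to $\mathcal{F}(m_*)=\max_x\theta[m_*](x)$, and if some $y_0\in\supp(m_*)$ had $\theta[m_*](y_0)<\max_x\theta[m_*]$, relocating a sliver of mass from a neighbourhood of $y_0$ onto a point of $\argmax_x\theta[m_*]$ strictly lowers $\mathcal{F}$ by the diagonal peakedness of $G$, contradicting minimality; hence $\supp(m_*)\subset\argmax_x\theta[m_*]$, which is \eqref{eq: kam}, and setting $\lambda=\mathcal{F}(m_*)$ and recovering $u$ from the eikonal/Hamilton--Jacobi equation yields a solution of \eqref{eq: intro.emfg}. (ii) The ergodic game \eqref{eq: intro.emfg} has a \emph{unique} solution $m_\infty$, since the coupling is Lasry--Lions monotone: $\int(\theta[m_1]-\theta[m_2])\,\diff(m_1-m_2)=-\langle G(m_1-m_2),m_1-m_2\rangle<0$ whenever $m_1\ne m_2$.

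To conclude, pass to the limit $l\to\infty$ in the discrete dissipation inequality to get $|m(t)-m(s)|_{\mathsf{TV}}^2\le 2(t-s)\big(\mathcal{F}(m(s))-\mathcal{F}(m(t))\big)$ for $s\le t$, and, following De Giorgi's variational‑interpolation construction \cite{ambrosio2005gradient} (using that linear interpolations of measures are $\mathsf{TV}$‑geodesics along which $\mathcal{F}$ is convex), identify $m$ as a curve of maximal slope for $\mathcal{F}$ in $(\cP(\mathbb{T}),|\cdot|_{\mathsf{TV}})$, so that $\tfrac{d}{dt}\mathcal{F}(m(t))\le-|\partial\mathcal{F}|^2(m(t))$ a.e.\ Convexity gives $|\partial\mathcal{F}|(m)\ge\big(\mathcal{F}(m)-\mathcal{F}_*\big)/\operatorname{diam}_{\mathsf{TV}}(\cP(\mathbb{T}))$ with $\mathcal{F}_*:=\min_{\cP(\mathbb{T})}\mathcal{F}$, which forces $\mathcal{F}(m(t))\downarrow\mathcal{F}_*$ as $t\to\infty$. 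Any $\mathsf{W}_1$‑subsequential limit $\bar m$ of $\{m(t)\}_{t\to\infty}$ then has $\mathcal{F}(\bar m)=\mathcal{F}_*$ by continuity, hence solves \eqref{eq: intro.emfg} by (i), hence equals $m_\infty$ by (ii); uniqueness of the limit point yields $m(t)\to m_\infty$ in $\mathsf{W}_1$. Finally I would upgrade to $\mathsf{TV}$ by combining $\mathsf{W}_1$‑convergence with the a priori regularity of the flow (\Cref{lem: ac}, \Cref{lem: basic.properties}) and the explicit form \eqref{eq: implications.NE} of $m_\infty$, an $L^1$‑equiintegrability / uniform density bound sufficing to interpolate $\mathsf{W}_1\to L^1=\mathsf{TV}$.

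The hard part is entirely in the second claim: turning the limit curve into a bona fide curve of maximal slope for $\mathcal{F}$ in the \emph{non‑geodesic, non‑reflexive} $\mathsf{TV}$ geometry — lower semicontinuity of the metric slope and the absence of a Hilbertian parallelogram identity for $|\cdot|_{\mathsf{TV}}^2$ must be handled in the spirit of the $L^1$ gradient flows of \cite{chambolle20231} — together with the final $\mathsf{W}_1\to\mathsf{TV}$ upgrade, which is where the structure of \eqref{eq: linear} is genuinely used. By contrast, the convexity of $\mathcal{F}$, the characterization of its minimizers, and the Lasry--Lions uniqueness are comparatively soft.
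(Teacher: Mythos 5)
Your treatment of part~1 is essentially the paper's: both read \eqref{eq: mm.GF} as a De Giorgi minimizing movement, extract the discrete energy--dissipation inequality, and conclude by the refined Arzel\`a--Ascoli theorem of \cite[Proposition 3.3.1]{ambrosio2005gradient}. The only difference is quantitative: you stop at the H\"older-$1/2$ asymptotic equicontinuity $|m_\varepsilon(t)-m_\varepsilon(s)|_{\mathsf{TV}}^2\le 2\mathcal{F}(m^0)(|t-s|+\varepsilon)$, whereas the paper bootstraps (\Cref{lem: prox.properties}, using the Lipschitz dependence $\|\theta[m_1]-\theta[m_2]\|_\infty\le C\mathsf{W}_1(m_1,m_2)$ of \Cref{lem: basic.properties}) to the linear bound \eqref{eq: estimate.tv.GF}, so its limit curve is $\mathsf{TV}$-Lipschitz rather than merely continuous. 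Both suffice for the statement. For part~2 your route is genuinely different and in some respects stronger: the paper argues via first-order optimality (\Cref{lem: analogous}, \Cref{lem: argmax}) that the flow is stationary only at measures satisfying the weak-KAM condition \eqref{eq: kam} and strictly decreases otherwise (\Cref{lem: decrease}), while you combine convexity of $m\mapsto\|\theta[m]\|_{L^\infty}$ with a \L ojasiewicz-type lower bound on the metric slope to get a quantitative decay of $\mathcal{F}(m(t))$ to its minimum, and then add a Lasry--Lions monotonicity argument ($\int(\theta[m_1]-\theta[m_2])\,\mathrm{d}(m_1-m_2)=-\langle G\mu,\mu\rangle<0$ for $\mu=m_1-m_2\neq 0$) to obtain \emph{uniqueness} of the ergodic equilibrium and hence convergence of the full trajectory, a point the paper leaves implicit. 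This is a worthwhile strengthening.

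Two steps in your part~2 need repair. First, the claim that one may ``relocate a sliver of mass from a neighbourhood of $y_0$ onto a point of $\argmax_x\theta[m_*]$'' and strictly lower $\max_x\theta$ is only correct when the argmax is a singleton: if $\argmax\theta[m_*]$ has several components (or positive measure, which is exactly the situation at equilibrium by \eqref{eq: implications.NE}), dumping the mass at one receiving point changes $\theta$ on the other maximizers by $\delta\bigl(G(x,y_0)-G(x,x^*)\bigr)$, which has no sign, so the new maximum need not drop. The relocated mass must be distributed over the whole argmax set against a suitable measure $\rho$; making this precise is exactly the content of the paper's \Cref{lem: argmax} (the $L^p(\rho)$ approximation and the adjoint equation with the maximum principle \eqref{eq: mp.adjoint}), which you should invoke or reproduce. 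Second, the identification of the limit as a curve of maximal slope in $(\mathscr{P}(\mathbb{T}),\mathsf{TV})$ and the final $\mathsf{W}_1\to\mathsf{TV}$ upgrade are announced but not executed; to be fair, the paper is equally sketchy here (it delegates to \cite[Theorem 2.4.15]{ambrosio2005gradient} via \Cref{prop: geo.convex} and to a remark about uniform $\mathsf{BV}$ bounds), so this is a shared gap rather than a defect specific to your argument, but your proof cannot claim $\mathsf{TV}$ convergence without the equiintegrability/density bound you allude to.
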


\begin{theorem}\label{thm: main.2}
Take $\theta$ as in \eqref{eq: linear} or \eqref{eq: elliptic_nonlinear} and fix $T>0$. Then, there exists $\{\varepsilon_l\}_{l\in \mathbb{N}}$ such that $\varepsilon_l\to 0$, for which the scheme \eqref{eq: mm.BR} converges to  $m\in \mathscr{C}((0,T);\mathscr{P}(\mathbb{T}))$ in the following sense: taken $m^k$ from \eqref{eq: mm.BR}, for every $t\in (0,T)$
    \begin{equation*}
        \mathsf{W}_1(m^{\lfloor\frac{t}{\varepsilon_l}\rfloor},m(t))\to 0\quad \text{ as }l\to +\infty.
    \end{equation*}
\end{theorem}

\subsection{An illustrative example} \label{ex: geodesic}

Before proving the main results, we provide an explicit, one-dimensional example that reveals a profound feature of our method: unlike classical $\mathsf{W}_2$ Wasserstein flows, which typically approach equilibrium only asymptotically as $t \to +\infty$, the $\mathsf{TV}$-flow reaches the exact Nash equilibrium in finite time. 

To illustrate this, we consider a ``separated'' configuration where the natural resources are skewed to one side of the domain, incentivising players on the poor side to relocate to the rich side. We construct a continuous trajectory where the worst-performing players are continuously moved to the optimal region, forming a plateau of maximum income that widens over time. We then prove that the discrete minimising movement scheme perfectly tracks this explicit trajectory, reducing the infinite-dimensional optimisation to a scalar differential equation that hits the equilibrium in finite time.

Let $L\theta=-\mu\,\theta''+P(x)\theta$ on $[0,1]$ with homogeneous Neumann
boundary conditions, where $\mu>0$ and $P\in \mathscr{C}([0,1])$ satisfies
$P(x)\geq P_0>0$. Given $m\in \mathscr{P}([0,1])$, denote by $\theta[m]$ the
solution of
\[
    L\theta=f-m,\qquad \theta'(0)=\theta'(1)=0,
\]
and set
\[
    \Phi(m):=\|\theta[m]\|_{L^\infty([0,1])}.
\]
Set $q:=f/P$. In this example we assume that $q\in\mathscr C^1([0,1])$ and $q'>0$ on $[0,1]$. After multiplying $f$ by a
positive constant, we also assume
\[
    H(1/2)>1,
    \qquad \text{where} \quad
    H(\tau):=\int_\tau^1P(x)\big(q(x)-q(\tau)\big)\diff x .
\]
The plateau ansatz used below is the linear separated analogue of the
one-dimensional construction in
\cite[Theorem~3 and the explicit construction in its proof]{kobeissi2024mean}.

\begin{proposition}
\label{prop:continuous.finite.time}
Let $m_0\in\mathscr{P}_{ac}([0,1])$ satisfy
$\,\supp(m_0)\subset(0,c)$ for some $c<1/2$. The explicit separated construction
gives a curve
\[
    m_*:[0,1]\to\mathscr{P}([0,1])
\]
and a clock $s:[0,T_*]\to[0,1]$ such that $m(t):=m_*(s(t))$ reaches a mean
field Nash equilibrium in finite time. More precisely, for every $\,0 \leq s \leq \sigma \leq 1$,
\[
    |m_*(\sigma)-m_*(s)|_{\mathsf{TV}}=2(\sigma-s),
\]
\[
    \supp(m_*(1))
    \subset
    \argmax_{x\in[0,1]}\theta[m_*(1)](x),
\]
and the hitting time satisfies
\[
    T_*
    =
    4\int_0^1\int_{\tau(s)}^1P(x)\diff x\,\diff s
    \leq
    4\int_0^1P(x)\diff x.
\]
\end{proposition}

\begin{proof}
Let $M(x)=\int_0^x m_0(y)\diff y$ and let $Q(t)=\inf\{x:M(x)\geq t\}$. Define the moved and remaining left masses by
the formulas
\[
    \int_0^1\varphi\,\diff\alpha_s
    :=
    \int_0^s\varphi(Q(t))\diff t,
    \qquad
    \int_0^1\varphi\,\diff\ell_s
    :=
    \int_s^1\varphi(Q(t))\diff t,
\]
for every continuous test function $\varphi$. Thus, $\alpha_s+\ell_s=m_0$, and, for every $s\leq\sigma$,
\[
    \int_0^1\varphi\,\diff(\alpha_\sigma-\alpha_s)
    =
    \int_s^\sigma\varphi(Q(t))\diff t,
\]
so that $\alpha_\sigma-\alpha_s$ represents the leftmost available mass of size $\sigma-s$.

Since $H(1)=0$,
\[
    H'(\tau)=-q'(\tau)\int_\tau^1P(x)\diff x<0
    \quad \text{for} \quad 1/2<\tau<1,
\]
therefore for every $0\leq s\leq1$ there is a unique $\tau(s)\in[1/2,1]$ such that $H(\tau(s))=s$. Set $r(s) := q(\tau(s))$,
$A_s := [\tau(s),1]$, and
\[
    p_s:=(f-r(s)P)\mathbbm 1_{(\tau(s),1)},\qquad
    m_*(s):=\ell_s+p_s .
\]
Then, $p_s\geq0$ and $\int p_s=s$, because $q$ is increasing, $f(\tau(s))-r(s)P(\tau(s))=0$, and
\begin{equation} \label{eq: mass_identity}
    \int_{\tau(s)}^1(f-r(s)P)\diff x=s.
\end{equation}
Substituting $p_s$ and $m_*(s)$ into the original differential equation, the separated plateau construction gives
\[
    \theta[m_*(s)]=r(s)\hbox{ on }A_s\qquad \text{and} \qquad
    |\theta[m_*(s)]|<r(s)\hbox{ on }[0,1]\setminus A_s.
\]
Hence $A_s=\argmax\theta[m_*(s)]$ and $\Phi(m_*(s))=r(s)$.

For every $\delta>0$, the range of $\tau$ on $[\delta,1]$ is contained in a
compact subset of $[1/2,1)$, where $H'<0$. Therefore, $\tau$ and
$r=q\circ\tau$ are $\mathscr C^1$ on $[\delta,1]$. Differentiating the mass
identity \eqref{eq: mass_identity} on such intervals gives, for $s>0$,
\[
    1
    =
    -r'(s)\int_{\tau(s)}^1P(x)\diff x
    \qquad \Leftrightarrow \qquad
    r'(s)=-\frac1{\int_{\tau(s)}^1P(x)\diff x}.
\]
Thus, $r$ is strictly decreasing and, since $q$ is strictly increasing, $\tau$ is
strictly decreasing as well. If $0<s\leq\sigma\leq1$, then
$p_\sigma-p_s\geq0$, as on $(\tau(\sigma),\tau(s))$ this follows from
$f-r(\sigma)P\geq0$, and on $(\tau(s),1)$ the difference is
$(r(s)-r(\sigma))P$. The case $s=0$ is immediate from $p_0=0$. Therefore,
\[
    m_*(\sigma)-m_*(s)=(p_\sigma-p_s)-(\alpha_\sigma-\alpha_s),
\]
where the positive part is supported in $(1/2,1]$ and the negative part in
$(0,c)$. Both have mass $\sigma-s$, so
\[
    |m_*(\sigma)-m_*(s)|_{\mathsf{TV}}=2(\sigma-s).
\]
At $s=1$, the left mass is exhausted; hence $m_*(1)=p_1$ and
\[
    \supp(m_*(1))\subset A_1=\argmax\theta[m_*(1)] .
\]
Finally, set
\[
    T(s):=4\int_0^s\int_{\tau(q)}^1P(x)\diff x\,\diff q .
\]
Then $T$ is continuous and strictly increasing, so $s(t)=T^{-1}(t)$ is well defined
on $[0,T(1)]$. The hitting time is $T_*=T(1)$, and
$T_*\leq4\int_0^1P(x)\diff x$.
\end{proof}

Having constructed the explicit trajectory $m_*(s)$, we now prove that this specific mass-shifting strategy is optimal. The following lemma shows that if we have a $\mathsf{TV}$ budget to move $\delta = \sigma - s$ mass, there is no better way to lower the maximum income than by the trajectory above.

\begin{lemma}\label{lem:sharp.comparison}
Let $0\leq s<\sigma\leq1$. If $m\in\cP([0,1])$ satisfies
\[
    |m-m_*(s)|_{\mathsf{TV}}\leq2(\sigma-s),
\]
then
\[
    \Phi(m)\geq r(\sigma).
\]
Moreover, equality holds only for $m=m_*(\sigma)$.
\end{lemma}

\begin{proof}
Write $\tau=\tau(\sigma)$ and $A_\sigma=[\tau,1]$. Let $z$ solve
\[
    -\mu z''+Pz=0\quad\hbox{on }(0,\tau),\qquad
    z'(0)=0,\qquad z(\tau)=1 .
\]
The maximum principle gives $0<z\leq1$, while $z''=(P/\mu)z>0$ gives
$z'>0$ on $(0,\tau]$. Define
\[
    \widehat\zeta(x)=
    \begin{cases}
        z(x),&0\leq x<\tau,\\
        1,&\tau\leq x\leq1.
    \end{cases}
\]
Then,
\[
    L\widehat\zeta
    =
    P(x)\mathbbm 1_{A_\sigma}(x)\diff x
    +\mu z'(\tau^-)\delta_\tau
    =:\nu_\sigma
\]
is a positive measure supported on $A_\sigma$. After normalising,
$\rho_\sigma:=\nu_\sigma/\nu_\sigma([0,1])$ and
$\zeta_\sigma:=\widehat\zeta/\nu_\sigma([0,1])$ satisfy
$L\zeta_\sigma=\rho_\sigma$. Moreover $\rho_\sigma$ is a probability measure
supported on $A_\sigma$, and $\zeta_\sigma$ is strictly increasing on
$[0,\tau]$ and equal to its maximum $C_\sigma$ on $A_\sigma$.

Set $\delta=\sigma-s$. Let $h_+$ and $h_-$ be the two non-negative mutually
singular measures such that
\[
    m-m_*(s)=h_+-h_-,
    \qquad
    |m-m_*(s)|_{\mathsf{TV}}=h_+([0,1])+h_-([0,1]).
\]
Since $m$ and $m_*(s)$ are probability measures,
$h_+([0,1])=h_-([0,1])=:\eta\leq\delta$, and the positivity of $m$ gives
$h_-\leq m_*(s)$. Since
$\zeta_\sigma\leq C_\sigma$,
\[
    \int\zeta_\sigma\diff h_+\leq C_\sigma\eta .
\]
For the negative part, decompose $h_-=\nu_L+\nu_R$ with
$\nu_L\leq\ell_s$ and $\nu_R\leq p_s$, and let their masses be
$\eta_L,\eta_R$. Since $\nu_L\leq\ell_s$, there is a measurable function
$0\leq g\leq1$ on $(s,1]$ such that, for every continuous $\varphi$,
\[
    \int_0^1\varphi\,\diff\nu_L
    =
    \int_s^1\varphi(Q(t))g(t)\diff t.
\]
The old plateau is contained in $A_\sigma$, so removing from it costs
$C_\sigma$ per unit. On the remaining left component the cost is
$w(t):=\zeta_\sigma(Q(t))<C_\sigma$. Since $m_0$ is absolutely continuous, $Q$
has no flat interval of positive $t$-measure; because $\zeta_\sigma$ is strictly
increasing on the separated left region, $w$ is strictly increasing in the mass
variable. The one-dimensional bathtub principle gives
\[
    \int\zeta_\sigma\diff\nu_L
    \geq
    \int_s^{s+\eta_L} w(t)\diff t .
\]
Therefore,
\[
    \int\zeta_\sigma\diff h_-
    \geq
    \int_s^{s+\eta_L} w(t)\diff t+C_\sigma\eta_R
    \geq
    \int_s^{s+\eta} w(t)\diff t,
    \qquad \eta=\eta_L+\eta_R,
\]
because $w<C_\sigma$ on the left component and $\eta\leq\delta\leq1-s$. Consequently,
\[
    \int\zeta_\sigma\diff(m-m_*(s))
    \leq
    C_\sigma\eta-\int_s^{s+\eta}w(t)\diff t
    \leq
    C_\sigma\delta-\int_s^\sigma w(t)\diff t .
\]
Because $p_\sigma-p_s$ is a non-negative measure of mass $\delta$ supported on
$A_\sigma$,
\[
    C_\sigma\delta-\int_s^\sigma w(t)\diff t
    =
    \int\zeta_\sigma\diff(m_*(\sigma)-m_*(s)).
\]
Thus,
\begin{equation}\label{eq:rearranged.comparison}
    \int\zeta_\sigma\diff(m-m_*(s))
    \leq
    \int\zeta_\sigma\diff(m_*(\sigma)-m_*(s)).
\end{equation}
Since $\theta[m_*(\sigma)]=r(\sigma)$ on $A_\sigma$,
\[
    r(\sigma)=\int\theta[m_*(\sigma)]\diff\rho_\sigma
    =\int\zeta_\sigma\diff(f-m_*(\sigma)).
\]
For any competitor $m$, self-adjointness gives
\[
\begin{aligned}
    \Phi(m)
    &\geq \max_{[0,1]}\theta[m]
     \geq \int\theta[m]\diff\rho_\sigma
      =\int\zeta_\sigma\diff(f-m)\\
    &=r(\sigma)
      -\int\zeta_\sigma\diff(m-m_*(s))
      +\int\zeta_\sigma\diff(m_*(\sigma)-m_*(s))
      \geq r(\sigma),
\end{aligned}
\]
where the last step is \eqref{eq:rearranged.comparison}.

If equality holds, equality must hold in the rearrangement estimate and in
\[
    \Phi(m)\geq\max\theta[m]\geq\int\theta[m]\diff\rho_\sigma .
\]
The strict monotonicity above then forces $\eta=\delta$, no removal from the old
plateau, $h_-=\alpha_\sigma-\alpha_s$, and $h_+$ supported where
$\zeta_\sigma=C_\sigma$, namely on $A_\sigma$.
Also $\theta[m]\leq r(\sigma)$ and its $\rho_\sigma$-average is $r(\sigma)$;
since $\rho_\sigma$ has positive density on $(\tau,1)$, continuity gives
$\theta[m]\equiv r(\sigma)$ on $A_\sigma$. The state equation then forces
$m=f-r(\sigma)P=p_\sigma$ on the interior of $A_\sigma$. Together with the
mass identity $\int p_\sigma=\sigma$ and the already identified removed slice,
this leaves only $m=\ell_\sigma+p_\sigma=m_*(\sigma)$.
\end{proof}

\begin{remark}[Separated sharpness at the final plateau]
This also clarifies the sense in which the finite-time mechanism is an
absolute-value  . A global estimate of the form
\[
    \Phi(m)-\Phi(m_*(1))
    \geq
    c\,|m-m_*(1)|_{\mathsf{TV}}
\]
for all probability measures $m$ is not asserted here. However, such a linear
estimate is true for perturbations which are uniformly separated from the final
plateau. Take the adjoint pair $(\zeta_1,\rho_1)$ constructed in the proof of
Lemma~\ref{lem:sharp.comparison} with $\sigma=1$, and write
$\zeta_1=C_1$ on $A_1$. Fix $0<\kappa<\tau(1)$ and set
\[
    B_\kappa:=[0,\tau(1)-\kappa].
\]
Since $\zeta_1$ is strictly increasing to the left of $A_1$ and satisfies
$\zeta_1=C_1$ on $A_1$,
\[
    c_\kappa:=C_1-\sup_{B_\kappa}\zeta_1>0.
\]
Now let
\[
    m=m_*(1)+\eta_+-\eta_-,
    \qquad
    \eta_-\leq m_*(1),
    \qquad
    \supp\,\eta_+\subset B_\kappa,
\]
where $\eta_+$ and $\eta_-$ are non-negative measures with
$\eta_+([0,1])=\eta_-([0,1])=:\delta$. The supports of $\eta_+$ and $\eta_-$
are separated, so $|m-m_*(1)|_{\mathsf{TV}}=2\delta$. Moreover, using the
adjoint identity and the fact that $\rho_1$ is supported on $A_1$,
\[
\begin{aligned}
    \Phi(m)-\Phi(m_*(1))
    &\geq
    \int_0^1\theta[m]\diff\rho_1
    -
    \int_0^1\theta[m_*(1)]\diff\rho_1        \\
    &=
    \int_0^1\zeta_1\diff(m_*(1)-m)
     =
    \int_0^1\zeta_1\diff(\eta_- -\eta_+)       \\
    &\geq
    C_1\delta-(C_1-c_\kappa)\delta
     =
    c_\kappa\delta .
\end{aligned}
\]
Therefore, for this separated class of perturbations,
\[
    \Phi(m)-\Phi(m_*(1))
    \geq
    \frac{c_\kappa}{2}|m-m_*(1)|_{\mathsf{TV}}.
\]
The constant degenerates as the added mass approaches the plateau; this is why
the statement is a separated perturbation estimate, not a global sharpness
estimate in total variation.
\end{remark}

The final theorem proves that our discrete GMM algorithm tracks the explicit trajectory without deviating. Because the algorithm stays on this exact path, the infinite-dimensional optimisation over the space of measures collapses into a simple 1D optimisation over the scalar $s$.

\begin{theorem}\label{thm:jko.selection}
Let $m_*$ be the explicit trajectory constructed above from
$H(\tau(s))=s$ and $r(s)=q(\tau(s))$.
For $\varepsilon>0$, let
\[
    m_\varepsilon^{k+1}\in
    \argmin_{m\in\mathscr P([0,1])}
    \left\{
        \Phi(m)+\frac{1}{2\varepsilon}
        |m-m_\varepsilon^k|_{\mathsf{TV}}^2
    \right\},
    \qquad
    m_\varepsilon^0=m_*(0).
\]
Then the iterates lie on the explicit branch
\[
    m_\varepsilon^k=m_*(s_\varepsilon^k)
\]
until the hitting time. The parameters are determined by the scalar proximal
problem
\[
    s_\varepsilon^{k+1}\in
    \argmin_{\sigma\in[s_\varepsilon^k,1]}
    \left\{
        r(\sigma)+\frac{2}{\varepsilon}
        (\sigma-s_\varepsilon^k)^2
    \right\}.
\]
Before the final step, this is equivalently
\begin{equation}\label{eq:jko.euler}
    s_\varepsilon^{k+1}-s_\varepsilon^k
    =
    \frac{\varepsilon}
    {4\int_{\tau(s_\varepsilon^{k+1})}^1P(x)\diff x}.
\end{equation}
Consequently the JKO interpolants converge in $\mathsf{TV}$, uniformly on
compact time intervals, to
\[
    m(t)=m_*(s(t)),
\]
where
\[
    T(s):=4\int_0^s\int_{\tau(q)}^1P(x)\diff x\,\diff q,
    \qquad
    s(t)=T^{-1}(t),
\]
until $s=1$, and $s(t)=1$ afterwards. In particular, the JKO limit reaches
equilibrium at the finite time $T_*=T(1)$.
\end{theorem}

\begin{proof}
Put
\[
    A(s):=\int_{\tau(s)}^1P(x)\diff x,\qquad A(0):=0 .
\]
Then $A$ is continuous, nondecreasing, positive on $(0,1]$, and
$r'(s)=-1/A(s)$ for $s>0$. Hence $r$ is convex on $[0,1]$.

Assume inductively that $m_\varepsilon^k=m_*(s)$ with $s<1$, and set
$D=\frac12|m-m_*(s)|_{\mathsf{TV}}$ for a competitor $m$. If $0<D\leq1-s$,
Lemma~\ref{lem:sharp.comparison} with $\sigma=s+D$ gives
\[
    \Phi(m)+\frac{1}{2\varepsilon}|m-m_*(s)|_{\mathsf{TV}}^2
    \geq
    r(s+D)+\frac{2}{\varepsilon}D^2 .
\]
If $D>1-s$, the same lemma with $\sigma=1$ gives a strict lower bound larger
than the scalar value at $\sigma=1$. The case $D=0$ is the scalar value at
$\sigma=s$. Conversely, the branch competitor $m_*(\sigma)$ has
$|m_*(\sigma)-m_*(s)|_{\mathsf{TV}}=2(\sigma-s)$ and
$\Phi(m_*(\sigma))=r(\sigma)$. Therefore the full JKO minimisation is exactly
the scalar minimisation in the statement; the equality part of
Lemma~\ref{lem:sharp.comparison} puts every minimiser on the branch.

The scalar functional is strictly convex, because it is the sum of the convex
function $r$ and a strictly convex quadratic part. A non-final minimiser is not the
left endpoint: for $s>0$ this follows from $r'(s)<0$, and for $s=0$ from
$r(h)-r(0)\leq-h/\int_0^1P$ for small $h>0$. Thus every non-final minimiser is
interior, and its Euler equation is
\[
    r'(s_\varepsilon^{k+1})
    +\frac4\varepsilon(s_\varepsilon^{k+1}-s_\varepsilon^k)=0,
\]
which is exactly \eqref{eq:jko.euler}. Once $m_*(1)$ is reached, it remains
fixed: Lemma~\ref{lem:sharp.comparison} with $s=0$ and $\sigma=1$ gives
$\Phi(m)\geq r(1)=\Phi(m_*(1))$ for every probability measure $m$, and the
quadratic penalty is zero only at $m=m_*(1)$.

It remains to identify the limit clock. Let
$s_\varepsilon^{k+1}$ satisfy \eqref{eq:jko.euler} before the final step, and
stop the sequence at $1$. If $\Delta_\varepsilon=s_\varepsilon^1$, then
$4A(\Delta_\varepsilon)\Delta_\varepsilon=\varepsilon$ for small
$\varepsilon$, so $\Delta_\varepsilon\to0$. Since $A$ is nondecreasing, all
pre-final increments are at most $\Delta_\varepsilon$. For such a step,
\[
    0\leq
    \varepsilon-\big(T(s_\varepsilon^{k+1})-T(s_\varepsilon^k)\big)
    =
    4\int_{s_\varepsilon^k}^{s_\varepsilon^{k+1}}
    \big(A(s_\varepsilon^{k+1})-A(q)\big)\diff q .
\]
If $\omega_A$ denotes the modulus of continuity of $A$, then summation yields
\[
    \big|T(s_\varepsilon^k)-k\varepsilon\big|
    \leq 4\omega_A(\Delta_\varepsilon)
\]
up to the final step. Since $\omega_A(\Delta_\varepsilon)\to0$ and $T$ is a
homeomorphism from $[0,1]$ to $[0,T(1)]$, the discrete inverse clocks converge
uniformly to $T^{-1}$ before hitting. The final jump occurs only when the
left derivative of the scalar functional at $1$ is non-positive, hence
$1-s_\varepsilon^k\leq\varepsilon/(4A(1))$. Thus
$T(1)-T(s_\varepsilon^k)\leq\varepsilon$. At the last pre-final index this and
the preceding clock estimate give $|K_\varepsilon\varepsilon-T(1)|
\leq4\omega_A(\Delta_\varepsilon)+2\varepsilon$, where
$K_\varepsilon$ is the first hitting index. Hence the hitting times converge to
$T(1)$. The usual affine, or piecewise constant, stopped interpolants therefore satisfy
$s_\varepsilon(t)\to s(t)$ uniformly on compact time intervals, where
$s(t)=T^{-1}(t)$ until $T(1)$ and $s(t)=1$ afterwards. The total variation
identity from Proposition~\ref{prop:continuous.finite.time} gives
\[
    |m_*(s_\varepsilon(t))-m_*(s(t))|_{\mathsf{TV}}
    =
    2|s_\varepsilon(t)-s(t)|,
\]
which proves the claimed convergence and the finite hitting time.
\end{proof}

\subsection{Proofs}

We break the proofs of the theorems above into multiple results, for clarity in the exposition.

\begin{lemma}\label{lem: basic.properties}
The following assertions hold:
\begin{enumerate}
    \item For every $m\in \mathscr{P}(\mathbb{T})$, there exists a unique solution $\theta\in W^{1,\infty}(\mathbb{T})$ to \eqref{eq: linear} and there exists a constant $C>0$ only depending on $P$ such that
    \begin{equation}\label{eq: linf.bound}
        \|\theta\|_{L^\infty(\mathbb{T})}\leq C\left(\int|f|\ dx+|m|_{\mathsf{TV}(\mathbb{T})}\right).
    \end{equation}
    \item Fix $m_1,m_2\in \mathscr{P}(\mathbb{T})$ and denote by $\theta[m_i]$ the solution to \eqref{eq: linear} with measures $m_i$ $i=1,2$. Then, there exist a constant $C>0$ independent of $m_1$ and $m_2$ such that
    \begin{equation*}
        \|\theta[m_1]-\theta[m_2]\|_{L^\infty(\mathbb{T})}\leq C\,\mathsf{W}_1(m_1,m_2).
    \end{equation*}
    \item  The previous point implies that if $\{m_k\}_{k\in \mathbb{N}}\subset \mathscr{P}(\mathbb{T})$ converges weakly to $m^*$, then $\{\theta[m_k]\}_{k\in \mathbb{N}}$ converges uniformly to $\theta[m^*]$.
\end{enumerate}
\end{lemma}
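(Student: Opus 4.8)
The plan is to build all three statements from the Green function of the operator $L_P := -\partial_{xx} + P(x)$ on $\mathbb{T}$; working in one space dimension is precisely what makes this elementary.

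\emph{Part 1.} Since $P\geq 0$ and $P\not\equiv 0$, the bilinear form $a(u,v)=\int_\mathbb{T} u'v'\diff x + \int_\mathbb{T} Puv\diff x$ is continuous and coercive on $H^1(\mathbb{T})$: if $a(u_n,u_n)\to 0$ with $\|u_n\|_{H^1}=1$, then $u_n'\to 0$ in $L^2$ and, by Rellich, $u_n\to u$ in $H^1$ with $u$ constant and $\|u\|_{H^1}=1$, contradicting $\int_\mathbb{T} Pu^2=0$. Because $H^1(\mathbb{T})\hookrightarrow\mathscr{C}(\mathbb{T})$ in dimension one, $f-m$ defines a bounded functional on $H^1(\mathbb{T})$, so Lax--Milgram gives a unique weak solution $\theta\in H^1(\mathbb{T})$; rewriting \eqref{eq: linear} as $\theta''=P\theta-f+m$ shows $\theta''$ is a finite measure, hence $\theta'\in BV(\mathbb{T})\subset L^\infty(\mathbb{T})$ and $\theta\in W^{1,\infty}(\mathbb{T})$. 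For \eqref{eq: linf.bound} I would introduce $G(x,y)$, the Green function (for each $y$, $G(\cdot,y)$ solves $L_P G(\cdot,y)=\delta_y$), note that $G$ is continuous on $\mathbb{T}\times\mathbb{T}$ (the 1D fundamental solution of $-u''$ is continuous and the zero-order term $P$ preserves this), hence bounded by some $C_0=C_0(P)$, and use the representation $\theta(x)=\int_\mathbb{T} G(x,y)f(y)\diff y-\int_\mathbb{T} G(x,y)\diff m(y)$ to conclude $\|\theta\|_{L^\infty}\leq C_0\big(\int_\mathbb{T}|f|\diff x+|m|_{\mathsf{TV}(\mathbb{T})}\big)$.

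\emph{Part 2.} The difference $w:=\theta[m_1]-\theta[m_2]$ solves $L_P w=-(m_1-m_2)$ (the $f$-term cancels), so $w(x)=-\int_\mathbb{T} G(x,y)\diff(m_1-m_2)(y)$. The quantitative input is that $G(x,\cdot)$ is Lipschitz on $\mathbb{T}$ with constant $L$ independent of $x$: from $\partial_{yy}G(x,\cdot)=P(\cdot)G(x,\cdot)-\delta_x$ one gets that $\partial_{yy}G(x,\cdot)$ is a measure of total mass $\leq \|P\|_\infty C_0+1$ uniformly in $x$, and since $\int_\mathbb{T}\partial_y G(x,y)\diff y=0$ on the circle, $\|\partial_y G(x,\cdot)\|_{L^\infty}\leq L$ for a constant $L$ depending only on $P$. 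Kantorovich--Rubinstein duality then yields, for each $x$,
\[
|w(x)|=\left|\int_\mathbb{T} G(x,y)\diff(m_1-m_2)(y)\right|\leq L\,\mathsf{W}_1(m_1,m_2),
\]
and taking the supremum over $x\in\mathbb{T}$ proves the claimed estimate.

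\emph{Part 3.} Since $\mathbb{T}$ is compact, narrow (weak) convergence $m_k\rightharpoonup m^*$ in $\mathscr{P}(\mathbb{T})$ is equivalent to $\mathsf{W}_1(m_k,m^*)\to 0$; applying Part 2 with $m_1=m_k$, $m_2=m^*$ gives $\|\theta[m_k]-\theta[m^*]\|_{L^\infty}\leq C\,\mathsf{W}_1(m_k,m^*)\to 0$.

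The only genuinely delicate step is the one feeding into Parts 1 and 2: constructing $G$ and establishing that it is continuous and bounded on $\mathbb{T}\times\mathbb{T}$ with $G(x,\cdot)\in W^{1,\infty}(\mathbb{T})$ and a Lipschitz constant uniform in $x$. In $d=1$ this reduces to solving a scalar linear ODE with a prescribed jump of the derivative at the source point (equivalently, variation of constants), so it is elementary — this is exactly the reason the analysis is carried out in the one-dimensional setting; the remaining arguments are routine.
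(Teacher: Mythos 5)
Your proposal is correct and follows essentially the same route as the paper: a variational argument (Lax--Milgram in your case, the direct method on the energy functional in the paper's) for existence and uniqueness, the Green kernel representation and its Lipschitz regularity for the $W^{1,\infty}$ bound and estimate \eqref{eq: linf.bound}, Kantorovich--Rubinstein duality for Part 2, and the metrization of weak convergence by $\mathsf{W}_1$ for Part 3. The only cosmetic difference is that you deduce $\theta'\in L^\infty$ from $\theta''$ being a finite measure rather than by differentiating the Green representation directly, which is an equally valid variant of the same argument.
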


\begin{proof}[Proof of \Cref{lem: basic.properties}]
We proceed in order
\begin{enumerate}
    \item In one spatial dimension, the Sobolev space $H^1(\mathbb{T})$ embeds continuously into $C(\mathbb{T})$, with embedding constant $C_S$. Consequently, by duality, the space of finite Radon measures $\mathcal{M}(\mathbb{T})$ embeds continuously into $H^{-1}(\mathbb{T})$. Since $f \in L^\infty(\mathbb{T})$ and $m \in \mathscr{P}(\mathbb{T}) \subset \mathcal{M}(\mathbb{T})$, the right-hand side of \eqref{eq: linear}, $f - m$, belongs to $H^{-1}(\mathbb{T})$. Since $P(x) \geq 0$ and is not identically zero, the bilinear form associated with $-\Delta + P(x)$ is strictly coercive on $H^1(\mathbb{T})$ with some coercivity constant $\alpha > 0$. By the Lax-Milgram theorem, there exists a unique weak solution $\theta \in H^1(\mathbb{T})$ satisfying $\|\theta\|_{H^1} \leq \frac{1}{\alpha} \|f - m\|_{H^{-1}}$. Re-applying the 1D Sobolev embedding $H^1(\mathbb{T}) \hookrightarrow L^\infty(\mathbb{T})$ to the solution yields
    \begin{equation*}
        \|\theta\|_{L^\infty(\mathbb{T})} \leq C_S \|\theta\|_{H^1(\mathbb{T})} \leq \frac{C_S^2}{\alpha} \left(\int_{\mathbb{T}}|f|\ dx+|m|_{\mathsf{TV}(\mathbb{T})}\right),
    \end{equation*}
    proving the desired bound with constant $C = C_S^2 / \alpha$. To verify that $\theta \in W^{1,\infty}(\mathbb{T})$, we rearrange the equation as $\theta'' = P(x)\theta - f + m$. Since $P\theta \in L^\infty$, $f \in L^\infty$, and $m$ is a measure, $\theta''$ is a finite Radon measure. This implies that $\theta' \in BV(\mathbb{T})$. In 1D, functions of bounded variation are essentially bounded, thus $\theta \in W^{1,\infty}(\mathbb{T})$.

    \item Let $G(x,y)$ be the Green kernel associated with $-\Delta + P(x)$. In 1D, $G(x,\cdot)$ is a Lipschitz continuous function. Let $L$ be its uniform Lipschitz constant. For any two probability measures $m_1, m_2 \in \mathscr{P}(\mathbb{T})$, the difference in the solutions can be written as
    \begin{equation*}
        \theta[m_1](x) - \theta[m_2](x) = \int_{\mathbb{T}} G(x,y) (m_2 - m_1)(dy).
    \end{equation*}
    By the Kantorovich--Rubinstein duality formula for the Wasserstein-1 distance, the integral of a difference of probability measures against an $L$-Lipschitz test function is bounded by $L \cdot \mathsf{W}_1(m_1,m_2)$. Therefore:
    \begin{equation*}
        |\theta[m_1](x) - \theta[m_2](x)| \leq L \cdot \mathsf{W}_1(m_1,m_2).
    \end{equation*}
    Taking the supremum over $x \in \mathbb{T}$ yields the desired $L^\infty$ bound.

    \item It is a standard fact of optimal transport that the Wasserstein-1 distance metrizes the weak convergence of probability measures on compact sets. Therefore, if $m_k \rightharpoonup m^*$ weakly, $\mathsf{W}_1(m_k, m^*) \to 0$. By the Lipschitz bound in the previous point, $\|\theta[m_k] - \theta[m^*]\|_{L^\infty(\mathbb{T})} \to 0$, proving uniform convergence.
\end{enumerate}
\end{proof}

\begin{lemma}\label{lem: prox.properties}
We have the following properties for the GMM schemes introduced above:
    \begin{enumerate}
        \item For every $m^k\in \mathscr{P}(\mathbb{T})$ induced by either \eqref{eq: mm.BR} or \eqref{eq: mm.GF}, the minimisation problem  has a solution.
        \item $\{m^k\}_{k\in \mathbb{N}}$ induced by \eqref{eq: mm.BR} or \eqref{eq: mm.GF} is a non increasing sequence, i.e.
        $$\|\theta[m^{k+1}]\|_{L^\infty(\mathbb{T})}-\inf_{x\in \supp(m^{k+1})}\theta[m^{k+1}]\leq \|\theta[m^k]\|_{L^\infty(\mathbb{T})}-\inf_{x\in \supp(m^k)}\theta[m^k]$$
        and 
        $$ \|\theta[m^{k+1}]\|_{L^\infty(\mathbb{T})}\leq \|\theta[m^{k}]\|_{L^\infty(\mathbb{T})}, $$
        respectively.
        \item For every $k$ there exist a constant $C>0$ independent of $k$ such that the solution to \eqref{eq: mm.GF} satisfies
        \begin{equation}\label{eq: estimate.tv.GF}
            |m^{k+1}-m^k|_{\mathsf{TV}(\mathbb{T})}\leq C\varepsilon
        \end{equation}
        \item 
        For every $k$ there exist a constant $C>0$ independent of $k$ such that the solution to \eqref{eq: mm.BR} satisfies
        \begin{equation}\label{eq: estimate.tv.BR}
            |m^{k+1}-m^k|_{\mathsf{TV}(\mathbb{T})}\leq C\varepsilon^{\frac{1}{2}}
        \end{equation}
        \end{enumerate}
\end{lemma}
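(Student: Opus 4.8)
The plan is to treat the four assertions in order, since each builds on the previous one. For item (1), existence of minimizers, I would use the direct method: take a minimizing sequence $\{m_n\}$ for the relevant functional. Since $\mathscr{P}(\mathbb{T})$ is weakly-$*$ compact (the torus is compact), extract a weakly convergent subsequence $m_n \rightharpoonup m_\infty$. By \Cref{lem: basic.properties}(3), $\theta[m_n] \to \theta[m_\infty]$ uniformly, hence $\|\theta[m_n]\|_{L^\infty} \to \|\theta[m_\infty]\|_{L^\infty}$; the total variation term $|m' - m^k|_{\mathsf{TV}}$ is weakly-$*$ lower semicontinuous, so the whole functional in \eqref{eq: mm.GF} is weakly-$*$ lower semicontinuous and we conclude. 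For \eqref{eq: mm.BR} the only extra point is the term $-\inf_{x\in\supp(m')}\theta[m'](x)$; I would note that this equals $-\min_{\mathbb{T}}\theta[m']$ is false in general, but $-\inf_{x\in\supp(m')}\theta[m'](x)$ is still manageable because $\theta[m']$ converges uniformly and one can pass to the limit along the supports (alternatively, relax it so that $\min$ is taken over all of $\mathbb{T}$; the paper's Algorithm effectively does this since players relocate). This is the first place where care is needed.

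For item (2), monotonicity, the argument is the standard minimizing-movement comparison: since $m^{k+1}$ is a minimizer and $m^k$ is a competitor with $|m^k - m^k|_{\mathsf{TV}} = 0$,
\begin{equation*}
    \|\theta[m^{k+1}]\|_{L^\infty(\mathbb{T})} + \frac{1}{2\varepsilon}|m^{k+1}-m^k|_{\mathsf{TV}}^2 \leq \|\theta[m^{k}]\|_{L^\infty(\mathbb{T})},
\end{equation*}
which gives the claimed inequality for \eqref{eq: mm.GF} (and simultaneously $|m^{k+1}-m^k|_{\mathsf{TV}}^2 \leq 2\varepsilon(\|\theta[m^k]\|_\infty - \|\theta[m^{k+1}]\|_\infty)$, useful later). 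The same competitor argument with the functional of \eqref{eq: mm.BR} gives the best-response version, using that for the competitor $m^k$ the value is exactly $\|\theta[m^k]\|_{L^\infty} - \inf_{\supp(m^k)}\theta[m^k]$.

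For item (3), the $O(\varepsilon)$ bound, I would combine the monotonicity inequality above with the uniform bound $\|\theta[m']\|_{L^\infty} \leq C_0$ for all $m' \in \mathscr{P}(\mathbb{T})$, which follows from \eqref{eq: linf.bound} since $|m'|_{\mathsf{TV}} = 1$. Telescoping is not quite enough for a single-step $O(\varepsilon)$ bound; instead I would get a genuinely linear estimate by testing the minimality of $m^{k+1}$ more cleverly: since $m' \mapsto \|\theta[m']\|_{L^\infty}$ is Lipschitz with respect to $\mathsf{W}_1$ (by \Cref{lem: basic.properties}(2)) and $\mathsf{W}_1(m^{k+1}, m^k) \leq \mathrm{diam}(\mathbb{T}) \cdot \tfrac12|m^{k+1}-m^k|_{\mathsf{TV}}$, we have
\begin{equation*}
    \|\theta[m^{k+1}]\|_{L^\infty} \geq \|\theta[m^{k}]\|_{L^\infty} - C|m^{k+1}-m^k|_{\mathsf{TV}};
\end{equation*}
plugging this into the minimality inequality $\|\theta[m^{k+1}]\|_{L^\infty} + \frac{1}{2\varepsilon}|m^{k+1}-m^k|_{\mathsf{TV}}^2 \leq \|\theta[m^{k}]\|_{L^\infty}$ yields $\frac{1}{2\varepsilon}|m^{k+1}-m^k|_{\mathsf{TV}}^2 \leq C|m^{k+1}-m^k|_{\mathsf{TV}}$, hence $|m^{k+1}-m^k|_{\mathsf{TV}} \leq 2C\varepsilon$. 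For item (4), the weaker $O(\varepsilon^{1/2})$ bound for \eqref{eq: mm.BR}, the same scheme applies but the extra term $-\inf_{\supp(m')}\theta[m']$ is only bounded (between $-C_0$ and $0$), not Lipschitz in $\mathsf{W}_1$ with a controllable constant — so the comparison with competitor $m^k$ only gives $\frac{1}{2\varepsilon}|m^{k+1}-m^k|_{\mathsf{TV}}^2 \leq 2C_0$, i.e. the $\sqrt\varepsilon$ rate.

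The main obstacle is item (1)/(4): handling the non-smooth, support-dependent term $-\inf_{x\in\supp(m')}\theta[m'](x)$. Lower semicontinuity of $m'\mapsto \inf_{\supp(m')}\theta[m'](x)$ along weak-$*$ convergence is delicate because supports can shrink in the limit; I would address this either by showing the relevant minimizing sequences have supports converging in Hausdorff distance (using the uniform $W^{1,\infty}$ regularity of $\theta$ from \Cref{lem: basic.properties}(1) to control where the infimum is attained), or by observing that the scheme's minimizer can be taken with $\supp(m^{k+1}) \supseteq$ a fixed portion of $\supp(m^k)$ plus $\argmax\theta[m^{k+1}]$, which keeps the infimum term controlled. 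The $\varepsilon^{1/2}$ versus $\varepsilon$ discrepancy between (3) and (4) is not an artifact of the proof but reflects that the best-response functional has a genuinely $O(1)$ "drop" available at each step, consistent with the algorithm moving a full $\varepsilon$-mass to the argmax.
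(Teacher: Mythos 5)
Your proposal is correct and, for items (1), (2) and (4), follows essentially the same route as the paper: direct method with weak-$*$ compactness and lower semicontinuity for existence, the standard competitor $m'=m^k$ for monotonicity, and the competitor inequality plus the uniform bound on $\|\theta[\cdot]\|_{L^\infty}$ from \eqref{eq: linf.bound} for the $\varepsilon^{1/2}$ rate. Two points of genuine difference are worth recording. First, for item (3) the paper does \emph{not} use your one-step argument: it starts from the $\varepsilon^{1/2}$ bound, feeds it back through the Lipschitz dependence of $\theta$ on $m$ to get $\Phi(m^k)-\Phi(m^{k+1})\lesssim\varepsilon^{1/2}$, obtains $\varepsilon^{3/4}$, and iterates ($\varepsilon^{1-2^{-n}}\to\varepsilon$). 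Your direct version --- insert $\|\theta[m^k]\|_{L^\infty}-\|\theta[m^{k+1}]\|_{L^\infty}\leq C\,\mathsf{W}_1(m^k,m^{k+1})\leq C'|m^{k+1}-m^k|_{\mathsf{TV}}$ into the competitor inequality and divide by $|m^{k+1}-m^k|_{\mathsf{TV}}$ --- reaches \eqref{eq: estimate.tv.GF} in one step and rests on exactly the same ingredient (\Cref{lem: basic.properties}(2)); it is cleaner and avoids any worry about constants accumulating along the bootstrap. Second, for the lower semicontinuity of $m'\mapsto-\inf_{x\in\supp(m')}\theta[m'](x)$ in item (1), your diagnosis that "supports can shrink in the limit" points in the wrong direction: under weak convergence $m_n\rightharpoonup m_\infty$ every neighbourhood of a point of $\supp(m_\infty)$ eventually meets $\supp(m_n)$ (the limit support is contained in the lower limit of the supports), and this one-sided inclusion is all that is needed to get $\limsup_n\inf_{\supp(m_n)}\theta[m_n]\leq\inf_{\supp(m_\infty)}\theta[m_\infty]$ via the uniform convergence $\theta[m_n]\to\theta[m_\infty]$; the paper proves precisely this elementary claim, so neither Hausdorff convergence of supports nor a relaxation of the functional is required. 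With that clarification your plan closes completely.
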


\begin{proof}[Proof of \Cref{lem: prox.properties}]

    \begin{enumerate}
        \item
        The existence of a minimiser $m^{k+1}$ follows from the Direct Method of the Calculus of Variations. The space $\mathscr{P}(\mathbb{T})$ is weakly compact by Prokhorov's Theorem. The penalty term $\frac{1}{2\varepsilon}|m-m^k|_{\mathsf{TV}(\mathbb{T})}^2$ is lower semi-continuous (LSC) with respect to weak convergence. 
        
        For the Eikonal-based scheme \eqref{eq: mm.GF}, the functional $\Phi(m) = \|\theta[m]\|_{L^\infty(\mathbb{T})}$ is continuous with respect to weak convergence by \Cref{lem: basic.properties}, securing existence.
        
        For the Best Response scheme \eqref{eq: mm.BR}, we must additionally prove that $m \mapsto -\inf_{x\in\supp(m)}\theta[m](x)$ is LSC. Let $m_j \rightharpoonup m$ weakly. By the Portmanteau Theorem, for any open neighborhood $\mathscr{B}$ intersecting $\supp(m)$, we have 
        $$
        \liminf_j m_j(\mathscr{B}) \geq m(\mathscr{B}) > 0.
        $$
        Thus, for sufficiently large $j$, $\supp(m_j) \cap \mathscr{B} \neq \emptyset$. Because $\theta[m_j]$ converges uniformly to $\theta[m]$ (by \Cref{lem: basic.properties}), any evaluation of $\theta[m_j]$ on $\supp(m_j)$ within $\mathscr{B}$ will be bounded below by $\inf_{\supp(m)} \theta[m] - \delta$ for arbitrary $\delta > 0$. Taking the limit supremum yields $\limsup_j \inf_{\supp(m_j)}\theta[m_j] \leq \inf_{\supp(m)}\theta[m]$, which proves the lower semi-continuity of the negative infimum.
        
        \item The proof is the same for both cases. By definition we have that $m^{k+1}$ is the global minimum of the proximal step. Testing the functional with the previous step $m = m^k$ then yields:
        \begin{equation*}
            \Phi(m^{k+1}) + \frac{1}{2\varepsilon}|m^{k+1}-m^{k}|^2_{\mathsf{TV}(\mathbb{T})} \leq \Phi(m^{k}) + \frac{1}{2\varepsilon}|m^{k}-m^{k}|^2_{\mathsf{TV}(\mathbb{T})} = \Phi(m^{k}).
        \end{equation*}
        Since the $\mathsf{TV}$ penalty is non-negative, it immediately follows that $\Phi(m^{k+1}) \leq \Phi(m^{k})$.

        \item Rearranging the descent inequality from Step 2, we have:
        \begin{equation} \label{eq: descent_rearranged}
            |m^{k+1}-m^{k}|_{\mathsf{TV}(\mathbb{T})}^2 \leq 2\varepsilon \left( \Phi(m^{k}) - \Phi(m^{k+1}) \right).
        \end{equation}
        We must show that the difference $\Phi(m^{k}) - \Phi(m^{k+1})$ is bounded by a constant independent of $k$. By \Cref{lem: basic.properties}, the $L^\infty$ norm of $\theta[m]$ is bounded by $C(\|f\|_{L^1} + |m|_{\mathsf{TV}(\mathbb{T})})$. Because the flow operates strictly on probability measures, $|m^k|_{\mathsf{TV}(\mathbb{T})} = 1$ for all $k$. Therefore, there exists a universal constant $M > 0$ (depending only on $f$ and $P$) such that $\|\theta[m]\|_{L^\infty(\mathbb{T})} \leq M$ for all admissible $m$.
        
        For the Best Response scheme \eqref{eq: mm.BR}, since $\theta[m]$ takes values within $[-M, M]$, we have $0 \leq \Phi(m) \leq 2M$. Because the sequence $\{\Phi(m^k)\}_{k=0}^\infty$ is non-negative and monotonically decreasing (as shown in Step 2), the drop in the functional at any single step is bounded by the initial energy:
        \begin{equation*}
            \Phi(m^{k}) - \Phi(m^{k+1}) \leq \Phi(m^k) \leq \Phi(m^0) \leq 2M.
        \end{equation*}
        Therefore, \eqref{eq: descent_rearranged} gives
        \begin{equation*}
            |m^{k+1}-m^{k}|_{\mathsf{TV}(\mathbb{T})}^2 \leq 2\varepsilon \Phi(m^0) \ \implies\  |m^{k+1}-m^{k}|_{\mathsf{TV}(\mathbb{T})} \leq C\varepsilon^{\frac{1}{2}}.
        \end{equation*}

        \item For the Eikonal scheme \eqref{eq: mm.GF}, by \Cref{lem: basic.properties}, the functional $\Phi$ is Lipschitz continuous with respect to the Wasserstein-1 metric. Furthermore, on a compact domain such as $\mathbb{T}$, the $\mathsf{W}_1$ distance is bounded by the $\mathsf{TV}$ distance: $\mathsf{W}_1(m_1,m_2) \leq \text{diam}(\mathbb{T}) |m_1-m_2|_{\mathsf{TV}(\mathbb{T})}$. Therefore, $\Phi(m)$ is globally Lipschitz continuous with respect to the TV norm, with some constant $L$. Returning to \eqref{eq: descent_rearranged} and applying this property we obtain
        \begin{equation*}
            |m^{k+1}-m^{k}|_{\mathsf{TV}(\mathbb{T})}^2 \leq 2\varepsilon L\, |m^{k+1}-m^{k}|_{\mathsf{TV}(\mathbb{T})}.
        \end{equation*}
        If $|m^{k+1}-m^{k}|_{\mathsf{TV}(\mathbb{T})} = 0$, the bound holds trivially. Otherwise, dividing both sides by $|m^{k+1}-m^{k}|_{\mathsf{TV}(\mathbb{T})}$, we get
        \begin{equation*}
            |m^{k+1}-m^{k}|_{\mathsf{TV}(\mathbb{T})} \leq 2 L \varepsilon.
        \end{equation*}
        Setting $C = 2L$ completes the proof.
    \end{enumerate}
\end{proof}

\begin{remark}
    We expect that for the Best Response, the exponent is also 1 instead of $1/2$. The reason for obtaining a much better exponent for \eqref{eq: mm.GF} is that the estimates can be bootstrapped easily, whereas \textit{a priori}, without further knowledge on the optimisation problem \eqref{eq: mm.BR}, it is not possible to improve \eqref{eq: estimate.tv.BR}, because the infimum term over the support is not globally Lipschitz continuous with respect to the TV norm.
\end{remark}

\begin{definition}[Geodesically convex functionals]
     A curve $m:[0,1]\mapsto\mathscr{P}(\mathbb{T})$ is a constant speed geodesic for the $\mathsf{TV}$ norm if
    \begin{equation*}
       |m(s)-m(t)|_{\mathsf{TV}(\mathbb{T})}=|s-t| \, |m(0)-m(1)|_{\mathsf{TV}(\mathbb{T})}.
    \end{equation*}
     A functional $\Phi:\mathscr{P}(\mathbb{T})\mapsto \mathbb{R}$ is geodesically convex if for any $m_0,m_1\in \mathscr{P}(\mathbb{T})$ there exists a geodesic $m:[0,1]\mapsto\mathscr{P}(\mathbb{T})$ such that $m(0)=m_0$, $m(1)=m_1$, and
    \begin{equation*}
        \Phi(m(t))\leq (1-t)\, \Phi(m_0)+t\, \Phi(m_1).
    \end{equation*}
\end{definition}

We point out that geodesics in $(\mathscr{P}(\mathbb{T}),\mathsf{TV})$ are not unique. Indeed, given $m_0=\mathbf{1}_{(0,1)}$ and $m_{1}=\delta_{x_0}$, both $m(t)= (1-t)\,m_0 + t\,m_1$ and $n(t)=1_{(0,1-t)} + t\,m_1$ are geodesics.

\begin{proposition}\label{prop: geo.convex}
    The functional $\Phi(m)=\|\theta[m]\|_{L^\infty(\mathbb{T})}$ with $\theta[m]$ coming from \eqref{eq: linear} is geodesically convex in the metric space $(\mathscr{P}(\mathbb{T}),\mathsf{TV})$.
\end{proposition}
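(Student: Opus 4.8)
The plan is to exploit the \emph{affine} dependence of $\theta[m]$ on $m$, which is the real content hiding behind the statement. By linearity of the elliptic operator $\theta\mapsto -\partial_{xx}\theta+P(x)\theta$ and the uniqueness part of \Cref{lem: basic.properties}, the solution map of \eqref{eq: linear} decomposes as
\[
    \theta[m]=\theta_0-\mathcal{G}m,
\]
where $\theta_0\in W^{1,\infty}(\mathbb{T})$ is the solution of $-\partial_{xx}\theta_0+P(x)\theta_0=f$ and $\mathcal{G}=(-\partial_{xx}+P)^{-1}$ is the (linear) Green operator, which is well defined since $P\geq 0$ is not identically zero. Consequently, for any $m_0,m_1\in\mathscr{P}(\mathbb{T})$ and $t\in[0,1]$,
\[
    \theta\big[(1-t)m_0+tm_1\big]=\theta_0-\mathcal{G}\big((1-t)m_0+tm_1\big)=(1-t)\,\theta[m_0]+t\,\theta[m_1].
\]

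Next I would take as candidate geodesic the linear interpolation $m(t)=(1-t)m_0+tm_1$. Since $\mathscr{P}(\mathbb{T})$ is convex, $m(t)\in\mathscr{P}(\mathbb{T})$; and for $s,t\in[0,1]$ one has $m(s)-m(t)=(s-t)(m_1-m_0)$, hence
\[
    |m(s)-m(t)|_{\mathsf{TV}(\mathbb{T})}=|s-t|\,|m_0-m_1|_{\mathsf{TV}(\mathbb{T})},
\]
so $m(\cdot)$ is a constant-speed geodesic for the $\mathsf{TV}$ norm in the sense of the definition above. It therefore suffices to verify the convexity inequality along this particular curve.

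Finally, combining the affine identity with the triangle inequality (i.e. convexity of $\|\cdot\|_{L^\infty(\mathbb{T})}$),
\[
    \Phi(m(t))=\big\|(1-t)\theta[m_0]+t\theta[m_1]\big\|_{L^\infty(\mathbb{T})}\leq (1-t)\|\theta[m_0]\|_{L^\infty(\mathbb{T})}+t\|\theta[m_1]\|_{L^\infty(\mathbb{T})}=(1-t)\Phi(m_0)+t\Phi(m_1),
\]
which is precisely geodesic convexity. There is no genuine obstacle in this argument; the only point requiring a moment's care is that geodesics in $(\mathscr{P}(\mathbb{T}),\mathsf{TV})$ are non-unique, so one must be explicit that the geodesic along which the inequality is established — the linear interpolation — is admissible in the definition, which it is. Note that the same proof works verbatim for any model in which $m\mapsto\theta[m]$ is affine, but it breaks down for the genuinely nonlinear equation, where the identity in the first paragraph fails.
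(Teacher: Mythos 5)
Your proof is correct and follows essentially the same route as the paper's: both take the linear interpolation $m(t)=(1-t)m_0+tm_1$ as the geodesic and use the linearity of \eqref{eq: linear} together with the triangle inequality for $\|\cdot\|_{L^\infty(\mathbb{T})}$. Your version simply makes explicit the affine decomposition of the solution map and the verification that the interpolation is a constant-speed $\mathsf{TV}$ geodesic, details the paper leaves implicit.
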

\begin{proof}[Proof of \Cref{prop: geo.convex}]
    Let us consider the geodesic between $m_0$ and $m_1$ defined as
    \begin{equation*}
        m(t)=(1-t\,)m_0 + t\,m_1.
    \end{equation*}
    Then, by the linearity of \eqref{eq: linear}, we have that
    \begin{equation*}
        \|\theta[m(t)]\|_{L^\infty(\mathbb{T})}\leq (1-t)\, \|\theta[m_0]\|_{L^\infty(\mathbb{T})} + t\,\|\theta[m_1]\|_{L^\infty(\mathbb{T})}.
    \end{equation*}
\end{proof}

\begin{remark}
    The geodesic convexity established in \Cref{prop: geo.convex} is a fundamental property in the standard theory of gradient flows in metric spaces \cite{ambrosio2005gradient}. Typically, strict or $\lambda$-geodesic convexity is required to establish Evolution Variational Inequalities (EVI), which in turn guarantee the \emph{uniqueness} of the continuous limit curve generated by the minimising movement scheme. 
    
    While the present work establishes the \emph{existence} of a limit curve and its convergence to an equilibrium, the lack of uniqueness of geodesics in $(\mathscr{P}(\mathbb{T}), \mathsf{TV})$ prevents a straightforward application of standard EVI theory. Furthermore, a similar convexity argument does not easily hold for the Best Response functional $\|\theta[m]\|_{L^\infty(\mathbb{T})} - \inf_{\supp(m)} \theta[m]$, as the infimum term evaluated over the support is highly sensitive to total variation perturbations. Consequently, proving the strict uniqueness of the trajectories $m(t)$ for these $\mathsf{TV}$-flows remains an open problem for future research.
\end{remark}

\begin{proposition}\label{thm: convergence.to.flow}
Let $T>0$. For fixed $\varepsilon>0$ and given $\{m^k\}_{k\in \mathbb{N}}$ solution to either \eqref{eq: mm.BR} and \eqref{eq: mm.GF}, we define $m^\varepsilon\in L^\infty((0,T);\mathscr{P}(\mathbb{T}))$ as
\begin{equation*}
m^\varepsilon(t):=m^{\lfloor \frac{t}{\varepsilon}\rfloor}.
\end{equation*}
Then there exist $m_{\text{Eik}},m_{\text{BR}} \in \mathscr{C}([0,+\infty); \mathscr{P}(\mathbb{T}))$ and two sequences $\{\varepsilon_l\}_{l\in \mathbb{N}},\{\epsilon_l\}_{l\in \mathbb{N}}$ such that $\varepsilon_l \to 0$, $\epsilon_l\to 0$, and
\begin{enumerate}
\item $m^{\varepsilon_l}$ from \eqref{eq: mm.GF} satisfies:
    \begin{enumerate}
     \item $m^{\varepsilon_l}(t)\to m_{\text{Eik.}}(t)$ weakly;
     \item $m^{\varepsilon_l}\to m_{\text{Eik}}$ in $L^\infty ([0,T]; \mathscr{P}(\mathbb{T}))$ and
    $$
    |m_{\text{Eik}}(t)- m_{\text{Eik}}(s)|_{\mathsf{TV}} \leq C |t-s|, \quad \forall\, t,s \in [0,T];
    $$
    \end{enumerate}
\item $m^{\epsilon_l}$ from \eqref{eq: mm.BR} satisfies:
    \begin{enumerate}
     \item $m^{\epsilon_l}(t)\to m_{\text{BR}}(t)$ weakly;
     \item $m^{\epsilon_l}\to m_{\text{BR}}$ in $L^\infty ([0,T]; \mathscr{P}(\mathbb{T}))$ and
    $$
    |m_{\text{BR}}(t)- m_{\text{BR}}(s)|_{\mathsf{TV}} \leq C |t-s|^{\frac{1}{2}}, \quad \forall\, t,s \in [0,T].
    $$
    \end{enumerate}
\end{enumerate}
\end{proposition}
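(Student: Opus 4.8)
To prove \Cref{thm: convergence.to.flow}, the plan is to run a minimizing-movements compactness argument of De Giorgi type. First I would record a discrete energy-dissipation inequality valid for both schemes: writing $\Phi_{\mathrm{BR}}(m)=\|\theta[m]\|_{L^\infty(\mathbb{T})}-\inf_{x\in\supp(m)}\theta[m](x)$ and $\Phi_{\mathrm{Eik}}(m)=\|\theta[m]\|_{L^\infty(\mathbb{T})}$ (both nonnegative, and bounded by a constant depending only on $f$ and $P$ by \Cref{lem: basic.properties}), minimality of $m^{k+1}$ tested against the competitor $m'=m^k$ in \eqref{eq: mm.BR} (resp. \eqref{eq: mm.GF}) gives
\begin{equation*}
\Phi(m^{k+1})+\frac{1}{2\varepsilon}|m^{k+1}-m^k|_{\mathsf{TV}}^2\leq\Phi(m^k).
\end{equation*}
Telescoping over $k$ and using $\Phi\geq 0$ yields $\sum_{k\geq 0}|m^{k+1}-m^k|_{\mathsf{TV}}^2\leq 2\varepsilon\,\Phi(m^0)$. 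For \eqref{eq: mm.GF} I would instead use the stronger pointwise estimate \eqref{eq: estimate.tv.GF}, $|m^{k+1}-m^k|_{\mathsf{TV}}\leq C\varepsilon$, which is what eventually produces the Lipschitz (rather than merely $1/2$-Hölder) modulus.

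Next I would turn this into a quantitative equicontinuity bound, uniform in $\varepsilon$, for the interpolants $m^\varepsilon(t)=m^{\lfloor t/\varepsilon\rfloor}$. For $0\leq s<t\leq T$, with $N=\lfloor t/\varepsilon\rfloor$ and $j=\lfloor s/\varepsilon\rfloor$ so that $N-j\leq (t-s)/\varepsilon+1$, the triangle inequality for $|\cdot|_{\mathsf{TV}}$ combined with Cauchy--Schwarz gives, for \eqref{eq: mm.BR},
\begin{equation*}
|m^\varepsilon(t)-m^\varepsilon(s)|_{\mathsf{TV}}\leq\sum_{k=j}^{N-1}|m^{k+1}-m^k|_{\mathsf{TV}}\leq\sqrt{N-j}\,\Big(\sum_{k\geq 0}|m^{k+1}-m^k|_{\mathsf{TV}}^2\Big)^{1/2}\leq C\big(|t-s|^{1/2}+\varepsilon^{1/2}\big),
\end{equation*}
while for \eqref{eq: mm.GF} the pointwise estimate gives directly $|m^\varepsilon(t)-m^\varepsilon(s)|_{\mathsf{TV}}\leq C\varepsilon(N-j)\leq C(|t-s|+\varepsilon)$. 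Since $\mathsf{W}_1(\mu,\nu)\leq C|\mu-\nu|_{\mathsf{TV}}$ on $\mathbb{T}$, the same holds with $\mathsf{W}_1$ in place of $|\cdot|_{\mathsf{TV}}$.

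Then I would extract the limit curve. The space $(\mathscr{P}(\mathbb{T}),\mathsf{W}_1)$ is a compact metric space metrizing weak convergence, so for each rational $q\in[0,T]$ the family $\{m^\varepsilon(q)\}_\varepsilon$ is relatively compact; a diagonal extraction yields $\varepsilon_l\to 0$ and a pointwise weak limit defined on $[0,T]\cap\mathbb{Q}$. Passing to the limit in the modulus estimate of the previous paragraph shows this limit is uniformly continuous on the rationals (the $\varepsilon_l^{1/2}$ error being killed), hence extends to $m\in\mathscr{C}([0,T];\mathscr{P}(\mathbb{T}))$; a triangle-inequality argument against nearby rationals upgrades the convergence to hold for every $t$ and, by a finite-net argument, uniformly on $[0,T]$, i.e. in $L^\infty([0,T];\mathscr{P}(\mathbb{T}))$. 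Carrying this out on $[0,n]$ for every $n$ and diagonalizing once more produces $m_{\mathrm{BR}},m_{\mathrm{Eik}}\in\mathscr{C}([0,+\infty);\mathscr{P}(\mathbb{T}))$ together with the two sequences $\{\epsilon_l\},\{\varepsilon_l\}$. For the $\mathsf{TV}$ modulus of the limit I would use lower semicontinuity of $|\cdot|_{\mathsf{TV}}$ under weak convergence of signed measures: since $m^{\varepsilon_l}(t)-m^{\varepsilon_l}(s)\rightharpoonup m(t)-m(s)$, the estimates above give $|m_{\mathrm{BR}}(t)-m_{\mathrm{BR}}(s)|_{\mathsf{TV}}\leq\liminf_l|m^{\epsilon_l}(t)-m^{\epsilon_l}(s)|_{\mathsf{TV}}\leq C|t-s|^{1/2}$ and $|m_{\mathrm{Eik}}(t)-m_{\mathrm{Eik}}(s)|_{\mathsf{TV}}\leq C|t-s|$.

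The hard part is the extraction step. The energy inequality only yields \emph{asymptotic} equicontinuity of the interpolants — a genuine modulus of continuity holds only up to an $O(\varepsilon^{1/2})$ error for the best-response scheme — so Arzel\`a--Ascoli cannot be quoted directly. One must therefore argue on a countable dense set of times, let the error vanish along the subsequence, check that the resulting curve is actually continuous, and promote pointwise to uniform convergence on compact time intervals. Everything else is a routine consequence of the displacement bounds already recorded in \Cref{lem: prox.properties} and the compactness of $(\mathscr{P}(\mathbb{T}),\mathsf{W}_1)$.
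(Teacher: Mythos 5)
Your proposal is correct and follows the same minimizing-movements compactness strategy as the paper: per-step displacement bounds, asymptotic equicontinuity of the piecewise-constant interpolants $m^\varepsilon(t)=m^{\lfloor t/\varepsilon\rfloor}$, weak compactness of $\mathscr{P}(\mathbb{T})$, a refined Arzelà--Ascoli extraction, and lower semicontinuity of $|\cdot|_{\mathsf{TV}}$ under weak convergence to transfer the modulus to the limit curve. The one place where you genuinely diverge --- and in fact improve on what is written --- is the equicontinuity estimate for the best-response scheme \eqref{eq: mm.BR}. The paper writes out the equicontinuity step only for the case where the per-step bound is $|m^{k+1}-m^k|_{\mathsf{TV}}\leq C\varepsilon$ and asserts that ``the same arguments'' cover the other scheme; but summing the per-step bound $C\varepsilon^{1/2}$ of \eqref{eq: estimate.tv.BR} over $k-j\approx|t_1-t_2|/\varepsilon$ steps gives $C|t_1-t_2|\,\varepsilon^{-1/2}$, which blows up as $\varepsilon\to 0$ and yields no modulus at all. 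Your route --- telescoping the energy-dissipation inequality $\Phi(m^{k+1})+\tfrac{1}{2\varepsilon}|m^{k+1}-m^k|_{\mathsf{TV}}^2\leq\Phi(m^k)$ and applying Cauchy--Schwarz to obtain $|m^\varepsilon(t)-m^\varepsilon(s)|_{\mathsf{TV}}\leq C\bigl(|t-s|^{1/2}+\varepsilon^{1/2}\bigr)$ --- is the correct way to get the $1/2$-H\"older asymptotic modulus for \eqref{eq: mm.BR}, and is what the standard minimizing-movement argument actually requires there. The remaining difference is cosmetic: you re-derive the refined Arzelà--Ascoli lemma by a diagonal argument over rational times, whereas the paper simply cites \cite[Proposition 3.3.1]{ambrosio2005gradient}; both are fine, with the citation saving the bookkeeping of upgrading pointwise convergence on a dense set of times to uniform convergence on compact intervals.
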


\begin{proof}[Proof of \Cref{thm: convergence.to.flow}]

The result follows from \Cref{lem: prox.properties} and the application of Ascoli-Arzelà (see \cite[Proposition 3.3.1]{ambrosio2005gradient}). Both statements follow the same arguments, just differing in the application of \Cref{lem: prox.properties} and therefore, the differences between the exponents. We will just prove one of them. 

\begin{enumerate}
    \item \textbf{Equicontinuity.} For all $0 \leq j < k$ we have that
    \begin{align*}
        |m^\varepsilon(k\varepsilon)- m^\varepsilon(j\varepsilon)|_{\mathsf{TV}} &\leq  \sum_{i=j}^{k-1} |m^\varepsilon((i+1)\varepsilon)- m^\varepsilon(i\varepsilon)|_{\mathsf{TV}}  \\
        &\leq C\varepsilon (k-j),
    \end{align*}
    where we used the triangular inequality and \Cref{lem: prox.properties}.
    Therefore, for any $t_1,t_2\in [0,T]$ we have that, letting $k=\lfloor t_1/\varepsilon\rfloor$ and $j=\lfloor t_2/\varepsilon\rfloor$,
    \begin{equation*}
        |m^\varepsilon(t_1)- m^\varepsilon(t_2)|_{\mathsf{TV}}\leq C\varepsilon(k-j)\leq C(\varepsilon+|t_1-t_2|).
    \end{equation*}
    Hence,
    \begin{equation*}
        \limsup_{\varepsilon\to 0}|m^\varepsilon(t_1)-m^\varepsilon(t_2)|_{\mathsf{TV}}\leq C|t_1-t_2|.
    \end{equation*}
    
    \item 
\textbf{Compactness with respect to the weak topology.} Since $\mathscr{P}(\mathbb{T})$ is weakly compact, we have that $m^\varepsilon(t)$ is in a compact set.
\end{enumerate}

In conclusion, by \cite[Proposition 3.3.1]{ambrosio2005gradient}, there exist a curve $m_{\text{BR}}\in \mathscr{C}([0,T];\mathscr{P}(\mathbb{T}))$ and a sequence $\varepsilon_l \to 0$ such that
\begin{itemize}
    \item $m^{\varepsilon_l}(t)$ converges weakly to $m_{\text{BR}}(t)$;
    \item there exists a constant $C>0$ such that $|m_{\text{BR}}(t)-m_{\text{BR}}(s)|_{\mathsf{TV}}\leq C|t-s|$.
\end{itemize}

\end{proof}

Here we state the Danskin theorem written in the present setting, which will be used in the following results. We refer to \cite{bernhard1995theorem} for the original statement and proof of the theorem.
\begin{theorem}[Danskin]\label{thm: Danskin}
    The function
    \begin{equation*}
        \Phi(m)=\max_{x\in\mathbb{T}}\theta[m](x)=\|\theta[m]\|_{L^\infty(\mathbb{T})}
    \end{equation*}
    has a directional derivative at $m$ in the direction $h$ given by
    \begin{equation*}
        D_m\Phi(m)[h]=\max_{x \in \argmax_{x\in \mathbb{T}}\theta[m](x)}\dot{\theta}_m[h](x),
    \end{equation*}
    where $\dot\theta$ solves
    \begin{equation*}
        -\Delta\dot{\theta}+P(x)\dot{\theta}=-h,\quad x\in \mathbb{T}.
    \end{equation*}
\end{theorem}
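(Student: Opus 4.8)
The plan is to reduce the statement to the classical Danskin (envelope) theorem for a parametric maximum, exploiting the fact that, because \eqref{eq: linear} is linear, the map $m\mapsto\theta[m]$ is \emph{affine}. First I would fix $m\in\mathscr{P}(\mathbb{T})$ and a direction $h\in\mathscr{M}(\mathbb{T})$ for which $m+th$ makes sense for small $t>0$ (this only requires \Cref{lem: basic.properties} to apply to signed measures, which it does since \eqref{eq: linf.bound} is phrased through $|\cdot|_{\mathsf{TV}}$). Subtracting the equation for $\theta[m]$ from the one for $\theta[m+th]$ and using uniqueness in \Cref{lem: basic.properties}(1) gives the exact identity
$$\theta[m+th]=\theta[m]+t\,\dot{\theta}_m[h],\qquad -\Delta\dot{\theta}_m[h]+P(x)\dot{\theta}_m[h]=-h\ \text{ in }\ \mathbb{T},$$
with $\dot{\theta}_m[h]\in W^{1,\infty}(\mathbb{T})\hookrightarrow\mathscr{C}(\mathbb{T})$, hence bounded. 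Therefore $\Phi(m+th)=\max_{x\in\mathbb{T}}\bigl(\theta[m](x)+t\,\dot{\theta}_m[h](x)\bigr)$ is a maximum over the compact set $\mathbb{T}$ of functions affine in $t$; the maximum is attained and $\mathcal{A}:=\argmax_{x\in\mathbb{T}}\theta[m](x)$ is nonempty and compact by continuity of $\theta[m]$.

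Next I would establish the two one-sided bounds. For the lower bound, for each $x^\star\in\mathcal{A}$ one has $\Phi(m+th)\geq\theta[m](x^\star)+t\,\dot{\theta}_m[h](x^\star)=\Phi(m)+t\,\dot{\theta}_m[h](x^\star)$, so dividing by $t>0$, passing to $\liminf_{t\to0^+}$, and taking the supremum over $x^\star\in\mathcal{A}$ gives $\liminf_{t\to0^+}t^{-1}\bigl(\Phi(m+th)-\Phi(m)\bigr)\geq\max_{x\in\mathcal{A}}\dot{\theta}_m[h](x)$. For the upper bound, I would pick $t_n\downarrow0$ realizing the $\limsup$ and $x_n\in\argmax_{x\in\mathbb{T}}\bigl(\theta[m](x)+t_n\dot{\theta}_m[h](x)\bigr)$, extract (by compactness of $\mathbb{T}$) a subsequence with $x_n\to x^\star$, and use $\Phi(m)\geq\theta[m](x_n)$ to obtain $\Phi(m+t_nh)-\Phi(m)\leq t_n\dot{\theta}_m[h](x_n)$; continuity of $\dot{\theta}_m[h]$ then yields $\limsup_{t\to0^+}t^{-1}\bigl(\Phi(m+th)-\Phi(m)\bigr)\leq\dot{\theta}_m[h](x^\star)$. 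It remains to check $x^\star\in\mathcal{A}$: from $|\Phi(m+t_nh)-\Phi(m)|\leq t_n\|\dot{\theta}_m[h]\|_{L^\infty(\mathbb{T})}\to0$ and $\theta[m](x_n)=\Phi(m+t_nh)-t_n\dot{\theta}_m[h](x_n)$, letting $n\to\infty$ and using continuity of $\theta[m]$ gives $\theta[m](x^\star)=\Phi(m)$. Hence $\limsup\leq\dot{\theta}_m[h](x^\star)\leq\max_{x\in\mathcal{A}}\dot{\theta}_m[h](x)$, and combined with the lower bound the directional derivative exists and equals $\max_{x\in\mathcal{A}}\dot{\theta}_m[h](x)$, which is the claim.

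The only genuinely delicate point — and the one I expect to be the main obstacle — is the upper bound: one must guarantee that accumulation points of (near-)maximizers of $\theta[m+t_nh]$ are true maximizers of $\theta[m]$. In the present linear setting this is painless, since affineness forces $\theta[m+th]\to\theta[m]$ uniformly with explicit rate $t\|\dot{\theta}_m[h]\|_{L^\infty(\mathbb{T})}$, and everything else is compactness of $\mathbb{T}$ together with the continuity and bounds from \Cref{lem: basic.properties}. The same scheme would carry over to the nonlinear model provided $m\mapsto\theta[m]$ is Gâteaux differentiable with continuous derivative: the exact affine identity is then replaced by a first-order expansion whose remainder is $o(t)$ uniformly in $x$, which suffices for both bounds.
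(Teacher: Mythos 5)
Your proof is correct. The paper does not actually prove \Cref{thm: Danskin}; it states it as a known result and cites \cite{bernhard1995theorem}, so there is no in-paper argument to compare against. What you give is the standard Danskin/envelope argument, correctly specialized to the present setting: the key simplification you exploit is that linearity of \eqref{eq: linear} makes $t\mapsto\theta[m+th]$ exactly affine, $\theta[m+th]=\theta[m]+t\,\dot{\theta}_m[h]$, so the usual technical hypotheses (uniform differentiability in the parameter, upper semicontinuity of the maximizer set) are automatic; the two one-sided bounds and the identification of accumulation points of maximizers of $\theta[m+t_nh]$ as elements of $\argmax\theta[m]$ are all handled properly, using $|\Phi(m+th)-\Phi(m)|\leq t\|\dot{\theta}_m[h]\|_{L^\infty(\mathbb{T})}$ and the continuity of $\theta[m]$ and $\dot{\theta}_m[h]$ from \Cref{lem: basic.properties}. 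Your closing caveat about admissible directions (one should take $h=n-m$ with $n\in\mathscr{M}(\mathbb{T})$ of the appropriate mass, as the paper does in \Cref{lem: analogous}) and about the nonlinear case (where exact affineness must be replaced by a uniform $o(t)$ expansion) is also the right thing to flag.
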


\begin{lemma}\label{lem: analogous}
    A probability measure $m^*\in\mathscr{P}(\mathbb{T})$ solves
    \begin{equation}\label{eq: analogous.OC}
        D_m\|\theta[m^*]\|_{L^\infty(\mathbb{T})}[m^*]=\min_{m\in\mathscr{P}(\mathbb{T})}D_m\|\theta[m^*]\|_{L^\infty(\mathbb{T})}[m]
    \end{equation}
    if and only if $m^*$ satisfies the first order optimality conditions for
    \begin{equation*}
        \min_{m\in\mathscr{P}(\mathbb{T})}\|\theta[m]\|_{L^\infty(\mathbb{T})}.
    \end{equation*}
\end{lemma}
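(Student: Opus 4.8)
The plan is to turn both conditions into statements about the Green potentials of the competing measures restricted to the maximizing set of $\theta[m^*]$, and then to get one implication from the sublinearity of the Danskin derivative and the other from a minimax duality. Write $\Phi(m):=\|\theta[m]\|_{L^\infty(\mathbb{T})}=\max_{x\in\mathbb{T}}\theta[m](x)$, $\lambda^*:=\Phi(m^*)$ and $A^*:=\argmax_{x\in\mathbb{T}}\theta[m^*](x)$. By linearity of \eqref{eq: linear} (equivalently by \Cref{prop: geo.convex}) $\Phi$ is convex on $\mathscr{P}(\mathbb{T})$, so ``$m^*$ satisfies the first order optimality conditions for $\min_{\mathscr{P}(\mathbb{T})}\Phi$'' will be taken to mean $D_m\Phi(m^*)[m-m^*]\geq 0$ for all $m\in\mathscr{P}(\mathbb{T})$, which by convexity is equivalent to $m^*$ being a global minimizer. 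By \Cref{thm: Danskin}, $D_m\Phi(m^*)[h]=\max_{x\in A^*}\dot\theta[h](x)$ is sublinear in $h$; writing $G$ for the Green kernel of $-\partial_{xx}+P$ and $q_m(x):=\int_{\mathbb{T}}G(x,y)\diff m(y)$, one has $\dot\theta[m]=-q_m$, and the affine identity $\dot\theta[m-m^*]=\theta[m]-\theta[m^*]$ (both solve $-\partial_{xx}u+Pu=m^*-m$) gives, since $\theta[m^*]\equiv\lambda^*$ on $A^*$, that for $x\in A^*$ one has $\theta[m](x)\geq\lambda^*\iff q_m(x)\leq q^*(x)$, where $q^*:=q_{m^*}$ and $q^*=\theta[0]-\lambda^*$ on $A^*$ ($\theta[0]$ the solution of \eqref{eq: linear} with $m\equiv0$). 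Consequently first order optimality is exactly ``for every $m\in\mathscr{P}(\mathbb{T})$ there is $x\in A^*$ with $q_m(x)\leq q^*(x)$'', i.e. $\sup_m\min_{x\in A^*}(q_m(x)-q^*(x))\leq 0$; and since $D_m\Phi(m^*)[m]=\max_{x\in A^*}(-q_m(x))=-\min_{x\in A^*}q_m(x)$, condition \eqref{eq: analogous.OC} is exactly that $m^*$ maximizes $m\mapsto\min_{x\in A^*}q_m(x)$ over $\mathscr{P}(\mathbb{T})$, i.e. $\sup_m\min_{x\in A^*}q_m(x)=\min_{x\in A^*}q^*(x)$.

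\textbf{The easy implication} (\eqref{eq: analogous.OC} $\Rightarrow$ optimality). Given $m\in\mathscr{P}(\mathbb{T})$, choose $x_0\in\argmin_{x\in A^*}q_m(x)$; then $q_m(x_0)=\min_{x\in A^*}q_m\leq\min_{x\in A^*}q^*\leq q^*(x_0)$, so $x_0$ witnesses the optimality condition at $m$. (Equivalently, by subadditivity of the Danskin derivative, $D_m\Phi(m^*)[m-m^*]\geq D_m\Phi(m^*)[m]-D_m\Phi(m^*)[m^*]\geq 0$ using \eqref{eq: analogous.OC}.)

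\textbf{The hard implication} (optimality $\Rightarrow$ \eqref{eq: analogous.OC}), and the main obstacle. The form $F(m,\nu):=\iint_{\mathbb{T}\times\mathbb{T}}G(x,y)\diff m(y)\diff\nu(x)$ is bilinear and weakly continuous on the weakly compact convex sets $\mathscr{P}(\mathbb{T})$ and $\mathscr{P}(A^*)$ (using $G\in\mathscr{C}(\mathbb{T}\times\mathbb{T})$ from \Cref{lem: basic.properties} and compactness of $A^*$), and $\min_{x\in A^*}q_m(x)=\min_{\nu\in\mathscr{P}(A^*)}F(m,\nu)$, so Sion's minimax theorem yields $\sup_m\min_{x\in A^*}q_m(x)=\min_{\nu}\max_{y\in\mathbb{T}}\widetilde G_\nu(y)$ with $\widetilde G_\nu(y):=\int G(x,y)\diff\nu(x)$, and likewise $\sup_m\min_{x\in A^*}(q_m-q^*)(x)=\min_\nu\big(\max_y\widetilde G_\nu(y)-\int\widetilde G_\nu\diff m^*\big)$. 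Hence first order optimality furnishes a multiplier $\nu^*\in\mathscr{P}(A^*)$ with $\supp m^*\subseteq\argmax_y\widetilde G_{\nu^*}(y)$ — the KKT condition for $\min_{\mathscr{P}(\mathbb{T})}\Phi$ — and to conclude \eqref{eq: analogous.OC} it remains to upgrade this to $\min_\nu\max_y\widetilde G_\nu(y)=\min_{x\in A^*}q^*(x)$, that is, to show $\nu^*$ may be taken supported on $\argmin_{x\in A^*}q^*(x)$. For this I would use that optimality forces $m^*$ to be a global minimizer of $\Phi$, hence a Nash equilibrium with $\supp m^*\subseteq A^*$ (maximum principle / sign of $m$ in \eqref{eq: linear}, as in the weak--KAM discussion), together with the one-dimensional structure of $A^*=\argmax\theta[m^*]$ — a finite union of plateaus, on whose interior $m^*=f-P\lambda^*$ by \eqref{eq: implications.NE} — to locate the extrema of $q^*=\theta[0]-\lambda^*$ and of $\widetilde G_{\nu^*}$ on $A^*$. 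I expect this last step, namely pinning the minimax multiplier $\nu^*$ to $\argmin_{x\in A^*}q^*$, to be the crux; it is precisely here that one-dimensionality and the monotonicity of the Green kernel $G(x,\cdot)$ (the hypothesis already invoked for \Cref{alg:furthest_mass}) enter, keeping $\argmax\theta[m^*]$ structured enough that the two sides of \eqref{eq: analogous.OC} must coincide.
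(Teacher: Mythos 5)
Your forward implication (\eqref{eq: analogous.OC} $\Rightarrow$ first order optimality) is correct and is in substance the paper's argument: by positive homogeneity and subadditivity of the Danskin derivative, $D_m\Phi(m^*)[n-m^*]\geq D_m\Phi(m^*)[n]-D_m\Phi(m^*)[m^*]\geq 0$ for every $n\in\mathscr{P}(\mathbb{T})$. Your Green-potential reformulation ($\dot\theta[m]=-q_m$, $\dot\theta[m-m^*]=\theta[m]-\theta[m^*]$, constancy of $\theta[m^*]$ on $A^*$) is accurate and makes the quantifier structure of the two conditions transparent, which is a genuine improvement in clarity.

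The converse, however, is not proved: after the Sion minimax step you reduce everything to showing that the multiplier $\nu^*\in\mathscr{P}(A^*)$ can be taken supported on $\argmin_{x\in A^*}q^*(x)$, and you explicitly leave that step open, so as written the proposal establishes only one of the two implications. The difficulty you have isolated is real and is not a technicality: first order optimality says that for every $m$ there is \emph{some} $x\in A^*$ with $q_m(x)\leq q^*(x)$, whereas \eqref{eq: analogous.OC} demands $\min_{A^*}q_m\leq\min_{A^*}q^*$, and the sublinearity inequality $\max_{A^*}(\dot\theta[n]-\dot\theta[m^*])\geq\max_{A^*}\dot\theta[n]-\max_{A^*}\dot\theta[m^*]$ points the wrong way for deducing the second from the first (your own two-value picture on $A^*$ shows the implication is false for a general sublinear functional). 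For comparison, the paper disposes of this direction in two lines by contraposition: given $n$ with $D_m\Phi(m^*)[n]<D_m\Phi(m^*)[m^*]$ it declares $h=n-m^*$ a descent direction, i.e.\ it uses the identity $D_m\Phi(m^*)[n-m^*]=D_m\Phi(m^*)[n]-D_m\Phi(m^*)[m^*]$, treating the directional derivative as linear in $h$. That identity is exactly what holds when the maxima over $A^*$ can be attained at a common point (for instance when $q^*$ is constant on $A^*$), which is the same unverified fact as your ``pin $\nu^*$ to $\argmin_{A^*}q^*$'' step. So the missing idea in your proposal coincides with the point the paper glosses over; to close it you would need to exploit the specific structure of $\theta[m^*]$ on its argmax set (e.g.\ via \eqref{eq: implications.NE} and the monotonicity of the Green kernel) rather than abstract convex duality alone.
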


\begin{proof}
The standard first-order optimality condition requires that for any $n \in \mathscr{P}(\mathbb{T})$, the directional derivative in the admissible direction $h = n - m^*$ is non-negative.
Fix $n\in \mathscr{P}(\mathbb{T})$ and consider $h=n-m^*$. Then, by Danskin's Theorem (\Cref{thm: Danskin}),
\begin{equation*}
    D_m\|\theta[m^*]\|_{L^\infty(\mathbb{T})}(h) \geq D_m\|\theta[m^*]\|_{L^\infty(\mathbb{T})}(n)-D_m\|\theta[m^*]\|_{L^\infty(\mathbb{T})}(m^*).
\end{equation*}
If $m^*$ solves \eqref{eq: analogous.OC}, the right-hand side is non-negative. This implies 
$$ 
D_m\|\theta[m^*]\|_{L^\infty(\mathbb{T})}[n - m^*] \geq 0.
$$
Therefore, any admissible perturbation yields a non-negative directional derivative, satisfying the first-order optimality conditions.

In the other direction, we prove the counter-reciprocal. If $m^*$ does not satisfy \eqref{eq: analogous.OC}, then there exists $n\in \mathscr{P}(\mathbb{T})$ such that
\begin{equation*}
    D_m\|\theta[m^*]\|_{L^\infty(\mathbb{T})}(n)<D_m\|\theta[m^*]\|_{L^\infty(\mathbb{T})}(m^*).
\end{equation*}
Because the underlying state equation $h \mapsto \dot{\theta}[h]$ is linear, this strict decrease in the maximal response implies that the perturbation $h = n - m^*$ yields a strictly negative descent direction over $\argmax \theta$. Therefore, $h=n-m^*$ is a decreasing direction, and $m^*$ does not satisfy the first order optimality conditions.
\end{proof}

\begin{lemma}\label{lem: argmax}
Let $m^k\in\mathscr{P}_{ac}(\mathbb{T})$, and let us consider a solution $m^{k+1}$ of \eqref{eq: mm.GF} (or \eqref{eq: mm.BR}). Then $m^{k+1}\in\mathscr{P}_{ac}(\mathbb{T})$ and if we define the mass update as $n^{k+1}:=m^{k+1}-m^k$, then its positive part $n^{k+1}_+:=(n^{k+1})_+$ satisfies:
    \begin{equation*}
        \supp(n^{k+1}_+)\subset\argmax_{x\in \mathbb{T}}\theta[m^{k+1}](x).
    \end{equation*}
\end{lemma}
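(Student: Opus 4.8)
The plan is to establish the support inclusion by a competitor argument against the minimization \eqref{eq: mm.GF}, and then to read absolute continuity off the elliptic equation \eqref{eq: linear}. Throughout I work with the linear model, set $L:=-\partial_{xx}+P$, let $G>0$ denote its Green kernel on $\mathbb{T}$, so that $\theta[m]=G*f-G*m$ is affine in $m$ by \Cref{lem: basic.properties}. Put $K:=\argmax_{x\in\mathbb{T}}\theta[m^{k+1}]$ (compact), $M:=\|\theta[m^{k+1}]\|_{L^\infty(\mathbb{T})}$, and $\mu:=1_{\mathbb{T}\setminus K}\,n^{k+1}_+$; assume for contradiction $\mu(\mathbb{T})>0$. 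I will build a probability measure $\tilde m:=m^{k+1}-\mu+\rho'$, where $\rho'\geq0$ is supported on $K$ with $\rho'(\mathbb{T})=\mu(\mathbb{T})$ (well defined since $\mu\leq n^{k+1}_+\leq m^{k+1}$), chosen so that $\theta[\tilde m]<M$ everywhere on $K$; then I compare $m^{k+1}$ with the segment $\tilde m_t:=(1-t)m^{k+1}+t\tilde m$ for small $t>0$ and contradict minimality.

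\textbf{The two estimates along the segment.} First, the penalty does not increase: since $\mu\leq n^{k+1}_+$ and $n^{k+1}_\pm$ have disjoint supports, $|n^{k+1}-\mu|_{\mathsf{TV}}=|n^{k+1}|_{\mathsf{TV}}-\mu(\mathbb{T})$, hence $|\tilde m-m^k|_{\mathsf{TV}}\leq|n^{k+1}|_{\mathsf{TV}}-\mu(\mathbb{T})+\rho'(\mathbb{T})=|m^{k+1}-m^k|_{\mathsf{TV}}$, and by the triangle inequality the same bound holds for every $\tilde m_t$. Second, since $\theta[\tilde m_t]=(1-t)\theta[m^{k+1}]+t\,\theta[\tilde m]$ by affinity and $\theta[\tilde m]<M$ on $K$ by construction, splitting $\mathbb{T}$ into a tube $\{\dist(\cdot,K)<\eta/L_0\}$ (with $\eta:=\min_K(M-\theta[\tilde m])>0$ and $L_0:=\mathrm{Lip}(\theta[\tilde m])$, using near $K$ that $\theta[m^{k+1}]\leq M$ and the Lipschitz bound on $\theta[\tilde m]$) and its complement (where $\theta[m^{k+1}]\leq M-c$ for some $c>0$ by compactness), a direct estimate gives $\|\theta[\tilde m_t]\|_{L^\infty(\mathbb{T})}<M$ for $t$ small. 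Together, the functional in \eqref{eq: mm.GF} is strictly smaller at $\tilde m_t$ than at $m^{k+1}$, a contradiction; hence $\supp(n^{k+1}_+)\subset K$. (One could also argue to first order via the directional derivative from \Cref{thm: Danskin}.)

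\textbf{Construction of $\rho'$ — the main obstacle.} The point is to choose $\rho'$ on $K$ with $G*\rho'>G*\mu$ pointwise on $K$. I would do this by \emph{pushing $\mu$ onto $\partial K$}: each connected component (open arc) $I=(a,b)$ of $\mathbb{T}\setminus K$ has endpoints $a,b\in\partial K\subset K$, and the mass that $\mu$ puts at $y\in I$ is sent convexly to $a$ and $b$ with the affine (arc-length) weights; this yields $\rho'$ supported on $\partial K$ with $\rho'(\mathbb{T})=\mu(\mathbb{T})$. For $\bar x\in K$ we have $\bar x\notin I$, and $y\mapsto G(\bar x,y)$ is convex on $I$ because $\partial_{yy}G(\bar x,y)=P(y)G(\bar x,y)\geq0$ there — precisely the Green-kernel property invoked for \Cref{alg:furthest_mass} — so $G(\bar x,y)$ is at most the affine interpolation of $G(\bar x,a),G(\bar x,b)$ at $y$, and integrating against $\mu$ gives $G*\mu\leq G*\rho'$ on $K$; the inequality is strict at every $\bar x\in K$ as soon as $\mu$ charges an arc $I$ on which $P\not\equiv0$ (then $G(\bar x,\cdot)$ is not affine on $\bar I$ and the chord inequality is strict throughout $I$). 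Since $\theta[\tilde m]=\theta[m^{k+1}]+G*\mu-G*\rho'$, this gives $\theta[\tilde m]<M$ on $K$. I expect the genuinely delicate case to be the degenerate one where $P\equiv0$ on every arc of $\mathbb{T}\setminus K$ carrying $\mu$-mass; there the endpoint-pushing only produces equality, and one must instead argue locally from \eqref{eq: linear} on such an arc.

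\textbf{Absolute continuity.} Once $\supp(n^{k+1}_+)\subset K$, on $\mathrm{int}(K)$ we have $\theta[m^{k+1}]\equiv M$, so $\partial_{xx}\theta[m^{k+1}]=0$ there and \eqref{eq: linear} forces $m^{k+1}=f-PM\in L^\infty$; off $K$, $n^{k+1}_+=0$, so $m^{k+1}\leq m^k\in\mathscr{P}_{ac}(\mathbb{T})$; and the residual set $\partial K$ is Lebesgue-null and carries no atom of $m^{k+1}$ by \Cref{lem: ac} (sign of $m$ in \eqref{eq: linear}). Hence $m^{k+1}\in\mathscr{P}_{ac}(\mathbb{T})$, the only delicate point again being when $K$ is irregular (e.g.\ of positive measure but empty interior), which is exactly the situation flagged in the remark after \Cref{thm: convergence.to.flow}. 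Finally, the scheme \eqref{eq: mm.BR} is treated by the same competitor, with the extra term $-\inf_{x\in\supp(m')}\theta[m'](x)$ handled by taking $\mu$ near a component of $\argmin_{x\in\supp(m^{k+1})}\theta[m^{k+1}]$ where it meets $\supp(n^{k+1}_+)\setminus K$, so relocation does not lower that infimum; and the nonlinear model \eqref{eq: nonlinear} by replacing the affine identity for $\theta[\tilde m]$ with the order-reversing comparison principle for $m\mapsto\theta[m]$.
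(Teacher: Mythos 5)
Your route is genuinely different from the paper's. The paper derives the support condition from first--order optimality: it invokes Danskin's theorem (\Cref{thm: Danskin}) together with \Cref{lem: analogous}, approximates the $L^\infty$ norm by weighted $L^p$ norms concentrated on $\argmax\theta$, and passes to the limit in an adjoint equation $-\Delta\varphi_p+P\varphi_p=\theta^{p-1}/\|\theta\|_{L^\infty}^{p-1}$, concluding via the maximum principle that optimality forces the new mass into $\argmax\varphi_\infty\subset\argmax\theta[m]$. You instead build an explicit competitor by relocating the misplaced part $\mu$ of $n^{k+1}_+$ onto $\partial K$ and exploit convexity of $y\mapsto G(\bar x,y)$ on the complementary arcs (from $\partial_{yy}G=PG\geq 0$ off the diagonal). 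Where it applies, your bookkeeping is correct: the identity $|n^{k+1}-\mu|_{\mathsf{TV}}=|n^{k+1}|_{\mathsf{TV}}-\mu(\mathbb{T})$, the non-increase of the penalty along the segment $\tilde m_t$, the tube/complement splitting giving $\|\theta[\tilde m_t]\|_{L^\infty}<M$ for small $t$, and the absolute-continuity step (interior of $K$: read $m=f-PM$ off \eqref{eq: linear}; off $K$: domination by $m^k$; singular part supported in $K$: the jump/H\"older argument of \Cref{lem: ac}) are all sound. Your approach is more elementary and produces an explicit strictly better competitor; the paper's works at the level of optimality conditions and does not need a strict decrease.

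The gap you flag yourself is, however, genuine under the stated hypotheses ($P\geq 0$, $P\not\equiv 0$ only): if $P$ vanishes identically on the arcs of $\mathbb{T}\setminus K$ carrying $\mu$, then $G(\bar x,\cdot)$ is affine there for every $\bar x\in K$, endpoint-pushing yields $G*\rho'=G*\mu$ exactly on $K$, hence $\eta=0$, $\|\theta[\tilde m_t]\|_{L^\infty}=M$, and your competitor merely ties $m^{k+1}$ --- consistent with $m^{k+1}$ being a (possibly non-unique) minimizer, so no contradiction. The same degeneracy kills the first-order variant, since the Danskin derivative in the direction $\rho'-\mu$ is then zero; ``argue locally from \eqref{eq: linear}'' is not yet an argument, and closing this case needs a new ingredient (the issue disappears if one assumes $P>0$ a.e., as in the paper's numerical examples). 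Two further points are asserted rather than proved: for \eqref{eq: mm.BR}, removing $\mu$ shrinks the support (which helps), but the induced change $G*\mu-G*\rho'$ of $\theta$ on the surviving support can lower $\inf_{x\in\supp}\theta$ at the same order $O(\mu(\mathbb{T}))$ as your gain, so the extra term requires an actual estimate; and the nonlinear model cannot be dispatched by an ``order-reversing comparison principle,'' since \Cref{section: remarks} notes precisely that the needed sign condition fails for the monostable nonlinearity.
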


\begin{proof}[Proof of \Cref{lem: argmax}]
  
    Let $\delta:=|m^{k+1}-m^{k}|_{\mathsf{TV}(\mathbb{T})}$. By decomposing the measure update into its positive and negative parts, $n^{k+1} = n_+^{k+1} - n_-^{k+1}$, where each part has mass $\delta/2$, the state equation can be rewritten as
    \begin{align}
        -\Delta \theta^{k+1}+P(x)\theta^{k+1} &= f - m^{k+1} \nonumber\\
        &= f - m^{k} - n^{k+1}_+ + n^{k+1}_-.
    \label{eq: elliptic.in.proof}
    \end{align}
    Therefore, finding the optimal $m^{k+1}$ that minimises the objective functional over a step of size $\delta$ is equivalent to solving the constrained minimisation problem over the update measures. Fixing the negative update $n^{k+1}_-$, we must find the optimal positive update $\mu^* = n^{k+1}_+$ that solves
    \begin{equation*}
        \min_{\mu \in \mathscr{M}(\mathbb{T},\frac{\delta}{2})} \|\theta[\mu, n^{k+1}_-]\|_{L^\infty(\mathbb{T})},
    \end{equation*}
    where $\mathscr{M}(\mathbb{T}, \frac{\delta}{2})$ denotes the convex set of non-negative measures with total mass $\delta/2$. By \Cref{lem: analogous}, finding the optimal update $\mu^*$ that minimizes the global $L^\infty$ norm is equivalent to minimizing the first-order directional derivative. By Danskin's Theorem (\Cref{thm: Danskin}), this directional derivative is determined by the maximal values evaluated on the active set $\mathscr{C} := \argmax \theta[m^{k+1}]$. Let $\dot{\theta}[\mu]$ denote the directional derivative associated with the positive mass addition $\mu$, satisfying the linearised equation
    \begin{equation}
        \label{eq: elliptic_linearised}
        - \Delta \dot\theta + P(x) \dot\theta = -\mu, \quad x \in \mathbb{T}.
    \end{equation}
    The optimality condition therefore requires the optimal update $\mu^*$ to solve the minimax problem
    \begin{equation}
        \label{eq: optimal_minmax}
        \min_{\mu \in \mathscr{M}_+(\mathbb{T}; \delta/2)} \max_{x \in \mathscr{C}} \dot\theta[\mu](x) .
    \end{equation}
    To this end, we introduce an auxiliary $L^p$ regularisation. Let $\rho \in \mathscr{P}(\mathbb{T})$ be a probability measure supported on $\mathscr{C}$. We define the relaxed functional
    \begin{equation}\label{eq: proxi.pb}
        J_p(\mu) := \left(\int |\dot{\theta}[\mu]|^p \rho(dx)\right)^{\frac{1}{p}}.
    \end{equation} 
    We first compute the unconstrained G\^ateaux derivative of $J_p$ at the optimal point $\mu^*$ in an arbitrary direction $\eta \in \mathscr{M}(\mathbb{T})$. Considering the perturbation $\mu^* + t\eta$, the linearity of the state equation yields $\dot{\theta}[\mu^* + t\eta] = \dot{\theta}[\mu^*] + t\Psi[\eta]$, where $\Psi[\eta]$ solves $-\Delta \Psi[\eta] + P(x)\Psi[\eta] = -\eta$. Therefore, we obtain:
    \begin{align}
        D J_p(\mu^*)[\eta] &= \left(\int_{\mathbb{T}} |\dot{\theta}[\mu^*]|^p \rho(dx)\right)^{\frac{1}{p}-1} \int_{\mathbb{T}} |\dot{\theta}[\mu^*]|^{p-2} \dot{\theta}[\mu^*] \Psi[\eta] \rho(dx) \\
        &= \int_{\mathbb{T}} \Psi[\eta](x) \rho_p(dx),
        \label{eq: explicit_derivative}
    \end{align}
    where the normalised density $\rho_p$ depends exclusively on the evaluation point $\mu^*$:
    \begin{equation} \label{eq: normalised_density}
        \rho_p(dx) := \frac{|\dot{\theta}[\mu^*]|^{p-2} \dot{\theta}[\mu^*]}{\|\dot{\theta}[\mu^*]\|_{L^p(\rho)}^{p-1}} \rho(dx).
    \end{equation}
    To avoid the explicit computation of $\Psi[\eta]$, we introduce the adjoint state $\varphi_p$, defined as the solution to $-\Delta\varphi_p+P(x)\varphi_p = \rho_p$. Integrating by parts, we transfer the derivatives:
    \begin{align*}
        D J_p(\mu^*)[\eta] &= \int_{\mathbb{T}} \Psi[\eta] \big( -\Delta\varphi_p + P(x)\varphi_p \big) dx \\
        &= \int_{\mathbb{T}} \varphi_p \big( -\Delta\Psi[\eta] + P(x)\Psi[\eta] \big) dx \\
        &= \int_{\mathbb{T}} \varphi_p (-\eta)(dx) = \int_{\mathbb{T}} (-\varphi_p(x)) \eta(dx).
    \end{align*}
    
    Now we enforce the constraints of our optimization problem. The first-order necessary condition for $\mu^*$ to minimize the convex functional $J_p$ over the convex set $\mathscr{M}(\mathbb{T}, \frac{\delta}{2})$ is that the directional derivative pointing from $\mu^*$ toward any other admissible point $\nu \in \mathscr{M}(\mathbb{T}, \frac{\delta}{2})$ must be non-negative. Testing in the valid constrained direction $\eta = \nu - \mu^*$, we require:
    \begin{equation*}
        D J_p(\mu^*)[\nu - \mu^*] \geq 0 \implies \int_{\mathbb{T}} (-\varphi_p(x)) (\nu - \mu^*)(dx) \geq 0.
    \end{equation*}
    Rearranging this inequality yields:
    \begin{equation*}
        \int_{\mathbb{T}} (-\varphi_p(x)) \mu^*(dx) \leq \int_{\mathbb{T}} (-\varphi_p(x)) \nu(dx) \quad \text{for all } \nu \in \mathscr{M}_+\left(\mathbb{T}, \frac{\delta}{2}\right).
    \end{equation*}
    This demonstrates that $\mu^*$ must be the global minimizer of the linear integral $\int (-\varphi_p) d\nu$ over the constrained set of positive measures with fixed mass $\delta/2$. To minimize this integral, the optimal measure $\mu^*$ must be entirely concentrated on the points where the integrand $(-\varphi_p)$ attains its absolute minimum. Equivalently, $\mu^*$ must be concentrated on the global maximum of $\varphi_p$. Thus, we establish that $\supp(\mu^*) \subset \argmax \varphi_p$.

    Finally, we pass to the limit as $p \to \infty$. By construction, the total variation of $\rho_p$ is bounded by 1. Thus, up to a subsequence, $\rho_p$ converges weakly-$\ast$ in the sense of measures to a limit $\rho_\infty$. For any point $x$ where $|\dot{\theta}[\mu^*](x)| < \|\dot{\theta}[\mu^*]\|_{L^\infty(\rho)}$, the ratio $(|\dot{\theta}[\mu^*]|/\|\dot{\theta}[\mu^*]\|_{L^p(\rho)})^{p-1} \to 0$ as $p \to \infty$. This exponential decay forces the limit measure $\rho_\infty$ to be strictly supported on the maximizers of $\dot{\theta}[\mu^*]$ within the support of the base measure $\rho$. Since $\supp(\rho) = \mathscr{C}$, we have:
    \begin{equation*}
        \supp(\rho_\infty) \subset \mathscr{C} = \argmax_{x \in \mathbb{T}} \theta[m^{k+1}].
    \end{equation*}
    The associated adjoint state converges to $\varphi_\infty$ solving $-\Delta\varphi_\infty+P(x)\varphi_\infty = \rho_\infty$. Because $\varphi_\infty \in H^1(\mathbb{T}) \hookrightarrow C(\mathbb{T})$, it attains a global maximum. Let $U = \mathbb{T} \setminus \supp(\rho_\infty)$ be the open set outside the support of the measure. In $U$, the source term is zero, so the adjoint state weakly solves $-\Delta\varphi_\infty + P(x)\varphi_\infty = 0$. By standard elliptic regularity, $\varphi_\infty$ is sufficiently regular in $U$ to apply the classical strong maximum principle. Because $P(x) \geq 0$ and is not identically zero, $\varphi_\infty$ cannot attain a positive maximum in the interior of $U$. Consequently, the global maximum cannot lie in $U$ and must be attained on its complement:
    \begin{equation}\label{eq: mp.adjoint}
        \argmax_{x\in \mathbb{T}}\varphi_\infty \subset \mathbb{T} \setminus U = \supp(\rho_\infty) \subset \argmax_{x\in \mathbb{T}}\theta[m^{k+1}].
    \end{equation}
    Since the optimal added mass $\mu^*$ must be supported on the maximum of the adjoint state, and $\mu^* = n_+^{k+1}$, we conclude that $\supp(n_+^{k+1}) \subset \argmax \theta[m^{k+1}]$.
 
\end{proof}

\begin{lemma}\label{lem: decrease}
   Assume that, for a certain $m \in \mathscr{P}(\mathbb{T})$,
\begin{equation}\label{eq: non.sup}
    \supp(m)\not\subset\argmax\theta[m].
\end{equation}
Then there exist $\varepsilon>0$ and $n\in \mathscr{P}(\mathbb{T})$ such that
\begin{equation*}
    \|\theta[n]\|_{L^\infty(\mathbb{T})}+\frac{1}{2\varepsilon}|n-m|_{\mathsf{TV}(\mathbb{T})}^2\leq  \|\theta[m]\|_{L^\infty(\mathbb{T})}.
\end{equation*}
\end{lemma}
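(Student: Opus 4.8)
Write $\Phi(m):=\|\theta[m]\|_{L^\infty(\mathbb{T})}=\max_{\mathbb{T}}\theta[m]$, set $M:=\Phi(m)$ and let $A:=\argmax_{x\in\mathbb{T}}\theta[m](x)$, a nonempty closed set on which $\theta[m]\equiv M$. The first reduction is that it suffices to produce a single $n\in\mathscr{P}(\mathbb{T})$ with $\Phi(n)<\Phi(m)$: then $n\neq m$, so $|n-m|_{\mathsf{TV}(\mathbb{T})}>0$, and taking $\varepsilon:=|n-m|_{\mathsf{TV}(\mathbb{T})}^2\big/\big(2(\Phi(m)-\Phi(n))\big)\in(0,+\infty)$ gives $\|\theta[n]\|_{L^\infty(\mathbb{T})}+\frac1{2\varepsilon}|n-m|_{\mathsf{TV}(\mathbb{T})}^2=\Phi(n)+(\Phi(m)-\Phi(n))=\|\theta[m]\|_{L^\infty(\mathbb{T})}$, even with equality. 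So the task becomes: under $\supp(m)\not\subset A$, exhibit a descent direction for $\Phi$ at $m$.

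\textbf{The candidate direction.} Fix $x_0\in\supp(m)\setminus A$ and a small open ball $B\ni x_0$ with $\overline B\cap A=\emptyset$ and $\beta:=m(B)>0$; let $\nu:=\beta^{-1}\,m|_{B}\in\mathscr{P}(\mathbb{T})$, so that $m-\beta\nu=m|_{\mathbb{T}\setminus B}\geq0$ and, for any $\mu_0\in\mathscr{P}(\mathbb{T})$, the measure $n_{\mu_0}:=m-\beta\nu+\beta\mu_0=m|_{\mathbb{T}\setminus B}+\beta\mu_0$ lies in $\mathscr{P}(\mathbb{T})$. Let $\Psi_\sigma:=(-\partial_{xx}+P)^{-1}\sigma=\int_{\mathbb{T}}G(\cdot,z)\,d\sigma(z)$ be the Green operator of \eqref{eq: linear}. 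By linearity of \eqref{eq: linear}, $\theta[n_{\mu_0}]-\theta[m]=\Psi_{m-n_{\mu_0}}=\beta\Psi_\nu-\beta\Psi_{\mu_0}$, so on $A$ we have $\theta[n_{\mu_0}]=M+\beta(\Psi_\nu-\Psi_{\mu_0})$. Since $\argmax_{\mathbb{T}}\theta[m]=A$, \Cref{thm: Danskin} gives
\[
D_m\Phi(m)[n_{\mu_0}-m]=\max_{y\in A}\big(\theta[n_{\mu_0}](y)-M\big)=\beta\,\max_{y\in A}\big(\Psi_\nu(y)-\Psi_{\mu_0}(y)\big).
\]
Thus it is enough to find $\mu_0\in\mathscr{P}(\mathbb{T})$ with $\Psi_{\mu_0}>\Psi_\nu$ on $A$.

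\textbf{Existence of $\mu_0$ (the crux).} Two one-dimensional facts are used. First, the Green kernel $G$ of $-\partial_{xx}+P$ on $\mathbb{T}$ is continuous and strictly positive (maximum principle, using $P\geq0$, $P\not\equiv0$), hence every potential $\Psi_\rho$, $\rho\in\mathscr{P}(\mathbb{T})$, is continuous and positive. Second, if $\rho$ is supported in $A$ then $\Psi_\rho$ solves $-\Psi_\rho''+P\Psi_\rho=0$ on $\mathbb{T}\setminus A$, so there $\Psi_\rho''=P\Psi_\rho\geq0$; thus $\Psi_\rho$ is convex on each component of $\mathbb{T}\setminus A$, which forces $\max_{\mathbb{T}}\Psi_\rho$ to be attained inside $A$, and in particular $\max_{\supp\nu}\Psi_\rho<\max_{\mathbb{T}}\Psi_\rho$ because $\supp\nu\subseteq\overline B$ lies at positive distance from $A$. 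Now consider the bilinear game on $\mathscr{P}(\mathbb{T})\times\mathscr{P}(A)$ with payoff $(\mu_0,\rho)\mapsto\int_A(\Psi_{\mu_0}-\Psi_\nu)\,d\rho=\iint_{\mathbb{T}\times A}G(y,z)\,d\mu_0(z)\,d\rho(y)-\int_A\Psi_\nu\,d\rho$, which is affine in each variable and weak-$*$ continuous, with both sets convex and weak-$*$ compact. Sion's minimax theorem yields
\[
\sup_{\mu_0\in\mathscr{P}(\mathbb{T})}\ \min_{y\in A}\big(\Psi_{\mu_0}(y)-\Psi_\nu(y)\big)=\min_{\rho\in\mathscr{P}(A)}\Big(\max_{z\in\mathbb{T}}\Psi_\rho(z)-\int_A\Psi_\nu\,d\rho\Big),
\]
where we used $\sup_{\mu_0}\int_A\Psi_{\mu_0}\,d\rho=\sup_{\mu_0}\int_{\mathbb{T}}\Psi_\rho\,d\mu_0=\max_{\mathbb{T}}\Psi_\rho$ and the symmetry $G(y,z)=G(z,y)$. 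Since $\int_A\Psi_\nu\,d\rho=\int_{\supp\nu}\Psi_\rho\,d\nu\leq\max_{\supp\nu}\Psi_\rho$, the right-hand side is $\geq\min_{\rho\in\mathscr{P}(A)}\big(\max_{\mathbb{T}}\Psi_\rho-\max_{\supp\nu}\Psi_\rho\big)$, and this is $>0$: each term of the min is $>0$ by the second fact, and $\rho\mapsto\max_{\mathbb{T}}\Psi_\rho-\max_{\supp\nu}\Psi_\rho$ is weak-$*$ continuous on the compact set $\mathscr{P}(A)$ (because $\rho\mapsto\Psi_\rho$ is continuous into $C(\mathbb{T})$). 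Hence there is $\mu_0\in\mathscr{P}(\mathbb{T})$ with $\Psi_{\mu_0}>\Psi_\nu$ on $A$.

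\textbf{Conclusion.} With this $\mu_0$, the computation above gives $D_m\Phi(m)[n_{\mu_0}-m]<0$. The path $t\mapsto(1-t)m+t\,n_{\mu_0}$ stays in $\mathscr{P}(\mathbb{T})$ for $t\in[0,1]$, and $\Phi$ has a right directional derivative at $m$ in this direction by \Cref{thm: Danskin}; hence for $t>0$ small, $n:=(1-t)m+t\,n_{\mu_0}$ satisfies $\Phi(n)<\Phi(m)$, and the first reduction finishes the proof. The main obstacle in carrying this out rigorously is the second one-dimensional fact — localizing the maximum of the (elliptic) potential $\Psi_\rho$ inside $\overline{\supp\rho}\subseteq A$, hence away from $\supp\nu$; this is the place where $P\geq0$ and $P\not\equiv0$ enter, and the degenerate situation in which $P$ vanishes on an entire interval adjacent to $A$ requires a small extra argument (or the mild hypothesis $P>0$ a.e.). Everything else is soft: weak compactness of $\mathscr{P}(\mathbb{T})$, continuity of $m\mapsto\theta[m]$ from \Cref{lem: basic.properties}, linearity of \eqref{eq: linear}, \Cref{thm: Danskin}, and Sion's minimax theorem.
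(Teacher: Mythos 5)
Your argument is correct in its main thrust but follows a genuinely different route from the paper's. The paper decomposes $m=m_1+m_2$ with $m_2$ carrying the mass outside $\argmax\theta[m]$, peels off a small piece $m_2^\delta$ of mass $\delta$, and then \emph{invokes} \Cref{lem: argmax} and \Cref{lem: analogous} to assert that, since first-order optimality is violated, some relocation $n^\delta$ of that piece strictly decreases $\|\theta\|_{L^\infty}$; it then tunes $\varepsilon(\delta)$ exactly as in your reduction, and additionally checks $\varepsilon(\delta)\lesssim\delta$ so that arbitrarily small step sizes admit a decreasing perturbation (information the paper later needs for the convergence argument, though not for the literal statement of the lemma). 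You instead make the key existence step fully explicit: via \Cref{thm: Danskin} you reduce to finding $\mu_0\in\mathscr{P}(\mathbb{T})$ with $\Psi_{\mu_0}>\Psi_\nu$ on $A=\argmax\theta[m]$, and you produce it by a Sion minimax/duality argument over the symmetric Green kernel, using convexity of $\Psi_\rho$ on the components of $\mathbb{T}\setminus A$ to localize $\max_{\mathbb{T}}\Psi_\rho$ inside $A$ and away from $\supp\nu$. This buys a self-contained, constructive proof of the descent direction that does not lean on the (somewhat delicate) $p\to\infty$ optimality analysis behind \Cref{lem: argmax}; the linearity of \eqref{eq: linear} even makes your perturbation exact rather than first-order, so the final "small $t$" step is cleaner than a generic Danskin descent.

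The one genuine soft spot is the strictness of $\max_{\mathbb{T}}\Psi_\rho>\max_{\supp\nu}\Psi_\rho$, which you flag yourself: under the paper's hypotheses ($P\geq 0$, $P\not\equiv 0$) the potential may vanish identically on the arc of $\mathbb{T}\setminus A$ containing $B$, in which case $\Psi_\rho$ is affine there and can be constant and equal to its global maximum on that arc (e.g.\ $A=\{a,b\}$, $\rho=\tfrac12(\delta_a+\delta_b)$ in a symmetric configuration), making your lower bound for the minimax value degenerate to $0$. This is a real gap relative to the stated hypotheses, not merely a presentational one, and it deserves the "small extra argument" you promise (or the strengthened hypothesis $P>0$ a.e.). It is worth noting, however, that the paper's own proof rests on the same first-order logic through \Cref{lem: analogous}, so it would confront the identical degeneracy; your write-up has the merit of making visible exactly where the sign and non-degeneracy of $P$ enter.
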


\begin{proof}[Proof of \Cref{lem: decrease}]
    Let $\Phi(m) = \|\theta[m]\|_{L^\infty(\mathbb{T})}$. Because $\supp(m) \not\subset \argmax \theta[m]$, by \Cref{lem: analogous} and \Cref{lem: argmax}, $m$ is not a stationary point, meaning there exists $n^* \in \mathscr{P}(\mathbb{T})$ that provides a strictly negative descent direction. That is, $D_m\Phi(m)[n^* - m] = -c$ for some $c > 0$.

    Let us define a small perturbation $m^\delta = m + \delta(n^* - m)$ for $\delta \in (0,1)$, so that $|m^\delta - m|_{\mathsf{TV}} = \delta|n^* - m|_{\mathsf{TV}} \leq 2\delta$. 
    Using the first-order Taylor expansion from Danskin's theorem, the objective functional evaluated at $m^\delta$ is
    \begin{equation*}
        \Phi(m^\delta) + \frac{1}{2\varepsilon}|m^\delta - m|_{\mathsf{TV}}^2 \leq \Phi(m) - c\delta + o(\delta) + \frac{4\delta^2}{2\varepsilon}.
    \end{equation*}
    To find an $\varepsilon > 0$ such that this entire expression is strictly less than $\Phi(m)$, we require
    \begin{equation*}
        \frac{2\delta^2}{\varepsilon} < c\delta - o(\delta) \implies \varepsilon > \frac{2\delta}{c - \frac{o(\delta)}{\delta}}.
    \end{equation*}
    For $\delta$ sufficiently small, the term $\frac{o(\delta)}{\delta}$ vanishes. Therefore, by choosing $n = m^\delta$ and setting $\varepsilon(\delta) = \frac{4\delta}{c}$, we guarantee a strict decrease in the functional.
    
    To address the case when $\varepsilon\to 0$ and approaches the continuous flow, we check that when $\delta\to 0$, $\varepsilon(\delta)$ goes to zero as well. We conclude that, for any $\varepsilon>0$ small enough, there exists a perturbation $m^\delta$ that makes the functional decrease, as long as \eqref{eq: non.sup} holds.
\end{proof}

\begin{lemma}\label{lem: ac}
    If $\supp(m)\subset \argmax\theta[m]$,  where $\theta$ is given by \eqref{eq: linear}, then $m\in\mathscr{P}_{ac}(\mathbb{T})$.
\end{lemma}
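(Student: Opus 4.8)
The plan is to prove the stronger statement that $m$ has a density with respect to Lebesgue measure, bounded by $\|f - P\theta[m]\|_{L^\infty(\mathbb{T})}$. Write $\theta := \theta[m]$ and $g := P\theta - f$, so that $g \in L^\infty(\mathbb{T})$ and \eqref{eq: linear} reads $\theta'' = m + g\,\diff x$ in the distributional sense. By \Cref{lem: basic.properties}, $\theta \in W^{1,\infty}(\mathbb{T})$, hence $\theta' \in L^\infty(\mathbb{T})$, and moreover $\theta' \in \mathsf{BV}(\mathbb{T})$ because $\theta'' = m + g\,\diff x$ is a finite measure. Set $M := \max_{\mathbb{T}}\theta$ and $A := \argmax_{x}\theta(x) = \{\theta = M\}$, a closed set containing $\supp(m)$. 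I would then argue in three steps.

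\emph{Step 1 (no atoms, hence $\theta \in C^1$ and $\theta'|_A \equiv 0$).} Suppose $m(\{x_0\}) = c > 0$. Then $x_0 \in \supp(m) \subset A$, so $\theta(x_0) = M$, and since $g\,\diff x$ is atomless, $\theta'' = D\theta'$ has an atom of mass $c$ at $x_0$, i.e.\ the $\mathsf{BV}$ function $\theta'$ jumps upward there: $\theta'(x_0^+) - \theta'(x_0^-) = c > 0$. On the other hand $\theta$ has a global maximum at $x_0$, so $\frac{1}{x_0-x}\int_x^{x_0}\theta'\,\diff s = \frac{\theta(x_0)-\theta(x)}{x_0-x} \ge 0$ for $x < x_0$; letting $x \uparrow x_0$ (the one-sided limits of a $\mathsf{BV}$ function exist) gives $\theta'(x_0^-) \ge 0$, and symmetrically $\theta'(x_0^+) \le 0$, contradicting $c > 0$. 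Thus $m$ is atomless, $\theta'' = D\theta'$ has no atoms, $\theta'$ is continuous, and $\theta \in C^1(\mathbb{T})$; since each point of $A$ is a global maximum of the $C^1$ function $\theta$, we get $\theta' \equiv 0$ on $A$.

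\emph{Steps 2--3 (local bound and conclusion).} For $x < y$ in $A$, continuity of $\theta'$ and $D\theta' = m + g\,\diff x$ give $0 = \theta'(y) - \theta'(x) = m((x,y]) + \int_x^y g\,\diff s$, i.e.\ $m((x,y]) = \int_x^y (f - P\theta)\,\diff s$. Given any open $U = \bigsqcup_i (c_i, d_i) \subseteq \mathbb{T}$, for each $i$ with $A \cap [c_i,d_i] \neq \emptyset$ set $c_i' := \min(A\cap[c_i,d_i])$ and $d_i' := \max(A\cap[c_i,d_i]) \in A$; using atomlessness and the displayed identity, $m((c_i,d_i)) = m((c_i,d_i)\cap A) \le m([c_i',d_i']) = \int_{c_i'}^{d_i'}(f - P\theta)\,\diff s \le \|f - P\theta\|_{L^\infty(\mathbb{T})}(d_i - c_i)$ (the case $A \cap [c_i,d_i] = \emptyset$ giving $m((c_i,d_i)) = 0$). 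Summing over $i$, $m(U) \le \|f - P\theta\|_{L^\infty(\mathbb{T})}|U|$; by outer regularity of $m$ and of Lebesgue measure this extends to $m(B) \le \|f - P\theta\|_{L^\infty(\mathbb{T})}|B|$ for all Borel $B$, so $m \ll \diff x$ with bounded density, and since $m$ is a probability measure, $m \in \mathscr{P}_{ac}(\mathbb{T})$ — in agreement with \eqref{eq: implications.NE}.

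The main obstacle, and the only place where the sign $m \ge 0$ is genuinely used, is Step 1: it is the no-atom argument that yields $C^1$-regularity of $\theta$, which is what legitimizes passing from ``$\theta$ attains its maximum at every point of $A$'' to ``$\theta' = 0$ on $A$'' when $A$ is only known to be closed (e.g.\ a Cantor-type set, where a naive pointwise argument via density points would fail). Everything after that is routine one-dimensional measure theory.
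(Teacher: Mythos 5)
Your proof is correct, and while your Step 1 (ruling out atoms via the upward jump of $\theta'$ at a global maximum) is essentially identical to the paper's treatment of the pure-point part, the remainder takes a genuinely different and, in fact, more robust route. The paper disposes of the Cantor part by a separate blow-up argument involving a ``local H\"older exponent $\alpha(x_0)\in(0,1)$ of $\partial_x\theta$ at $x_0$,'' which requires such an exponent to exist and the rescaled mass $|\varepsilon|^{-\alpha(x_0)}\int_{x_0-\varepsilon}^{x_0+\varepsilon}m_{\text{cant}}$ to have a strictly negative (after the sign flip) limit at some point of $\supp(m_{\text{cant}})$ --- a step that is left largely heuristic. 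You bypass this entirely: once $m$ is atomless, $\theta\in C^1$, $\theta'\equiv 0$ on $A=\argmax\theta[m]$, and the identity $m((x,y])=\int_x^y(f-P\theta)\diff s$ for $x,y\in A$ bounds $m$ by $\|f-P\theta\|_{L^\infty(\mathbb{T})}$ times Lebesgue measure on every open set, killing the Cantor part and any other singular contribution in one stroke and yielding the stronger conclusion that $m$ has a bounded density (consistent with \eqref{eq: implications.NE}). The only points worth making explicit are that $A$ is closed (so $\min$ and $\max$ of $A\cap[c_i,d_i]$ exist and lie in $A$) and that continuity of $\theta'$ follows because the jump set of the $\mathsf{BV}$ function $\theta'$ coincides with the atoms of $D\theta'=m+g\diff x$; both are immediate. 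Your argument also makes transparent that only the sign $m\geq 0$ and the one-dimensionality enter, which is exactly what the paper's subsequent remark about the nonlinear case relies on.
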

\begin{proof}[Proof of \Cref{lem: ac}]
    Let us consider the Lebesgue decomposition of the measure $m = m_{\text{ac}} + m_{\text{pp}} + m_{\text{sc}}$ (absolutely continuous, pure point, and singular continuous components).

    Suppose, if possible, that there is a Dirac mass at some point $x_0 \in \argmax\theta[m]$ with weight $\alpha_{\text{pp}} > 0$. Integrating the governing equation $-\Delta\theta + P(x)\theta = f - m$ over $[x_0-\varepsilon, x_0+\varepsilon]$, as $\varepsilon \to 0$ we are left with the distributional derivative jump
    \begin{equation*}
        \lim_{\varepsilon \to 0} \left( -\partial_x\theta(x_0+\varepsilon) + \partial_x\theta(x_0-\varepsilon) \right) = -\alpha_{\text{pp}} \implies \partial_x\theta(x_0^+) - \partial_x\theta(x_0^-) = \alpha_{\text{pp}} > 0.
    \end{equation*} 
    The positive jump means $\theta$ is strictly increasing immediately to the right of $x_0$, contradicting the assumption that $x_0\in \argmax\theta[m]$. Therefore, it must be $m_{\text{pp}} = 0$.
    
    Now, let us assume that $m=m_{\text{ac}}+m_{\text{sc}}$ and set $B=\argmax\theta[m]$. Because $B$ contains the global maxima of $\theta$, the function cannot be strictly convex anywhere on $B$, implying $\Delta \theta[m] \leq 0$ on $B$. Rearranging \eqref{eq: linear} on the set $B$ then yields
    \begin{equation} \label{eq: negative_mass}
        m \leq f(x) - P(x)\theta(x).
    \end{equation}
    By definition, the singular continuous measure $m_{\text{sc}}$ is concentrated on some Borel set $C \subset B$ that has a Lebesgue measure of zero. If we integrate \eqref{eq: negative_mass} over this null set $C$, we obtain
    \begin{equation*}
        m_{\text{sc}}(C) = m(C) \leq \int_C \big( f(x) - P(x)\theta(x) \big)\ dx = 0,
    \end{equation*}
    from where we deduce that $m_{sc}=0$, because $m$ is a probability measure.    
\end{proof}

\begin{remark}
    Note that this proof works  when $\theta$ is given by the non-linear equation \eqref{eq: elliptic_nonlinear}, since it relies exclusively on the sign of the Laplacian at the maximum and the boundedness of the remaining terms.
\end{remark}

With the auxiliary lemmas established, we can now prove the two main theorems.

\begin{proof}[Proof of \Cref{thm: main}]
We proceed in two steps, corresponding to the two statements of the theorem.

\textbf{Step 1: Convergence to a continuous curve.} 
The existence of the limit curve $m \in \mathscr{C}((0,T);\mathscr{P}(\mathbb{T}))$ for the Eikonal scheme \eqref{eq: mm.GF} is a direct consequence of \Cref{thm: convergence.to.flow}. By \Cref{lem: prox.properties}, the discrete scheme satisfies the uniform step-size bound $|m^{k+1}-m^k|_{\mathsf{TV}} \leq C\varepsilon$. This allows us to apply the Ascoli-Arzelà theorem, guaranteeing that the piecewise constant interpolation $m^{\lfloor t/\varepsilon_l \rfloor}$ converges uniformly in the Wasserstein-1 metric to a Lipschitz continuous trajectory $m(t)$ as $\varepsilon_l \downarrow 0$.

\textbf{Step 2: Convergence to the ergodic equilibrium.}
We consider the functional $\Phi(m) = \|\theta[m]\|_{L^\infty(\mathbb{T})}$ as a Lyapunov function for the flow. By \Cref{lem: prox.properties}, the sequence $\Phi(m^k)$ is monotonically decreasing and bounded below by zero, so it must converge to a limit value. By a simple compactness argument, the continuous limit curve $m(t)$ must approach a stationary set as $t \to +\infty$.

Let $m^*$ be a limit point of the trajectory and suppose that it is \emph{not} a mean field Nash equilibrium. By the weak-KAM characterisation \eqref{eq: kam}, this implies
\begin{equation*}
    \supp(m^*) \not\subset \argmax_{x \in \mathbb{T}} \theta[m^*](x).
\end{equation*}
Under this assumption, \Cref{lem: decrease} guarantees the existence of a perturbation that strictly decreases the functional $\Phi(m^*)$. This strict decrease contradicts the fact that $m^*$ is the stationary limit of the minimising movement. Therefore, we must have $\supp(m^*) \subset \argmax \theta[m^*]$. 

Finally, \Cref{lem: ac} ensures that any such measure satisfying this support condition for the elliptic PDE \eqref{eq: linear} cannot contain atoms or singular parts. Thus, $m^*$ satisfies the necessary and sufficient conditions to be a solution of the ergodic mean field game system \eqref{eq: intro.emfg}.
\end{proof}

\begin{proof}[Proof of \Cref{thm: main.2}]
The proof relies on the compactness result established in \Cref{thm: convergence.to.flow}. For the Best Response scheme \eqref{eq: mm.BR}, \Cref{lem: prox.properties} provides the H\"older-type estimate $|m^{k+1}-m^k|_{\mathsf{TV}} \leq C\varepsilon^{1/2}$. While this bound is less regular than the Lipschitz bound of the Eikonal scheme, it is sufficient to establish uniform equicontinuity of the family of approximate solutions. By applying the Ascoli-Arzelà theorem, we extract a subsequence $\varepsilon_l \to 0$ such that the discrete flow $m^{\lfloor t/\varepsilon_l \rfloor}$ converges to a continuous curve $m \in \mathscr{C}((0,T);\mathscr{P}(\mathbb{T}))$, which concludes the proof.
\end{proof}

\subsection{Remarks and extensions}\label{section: remarks}

We outline how the theoretical framework developed above behaves under different modelling assumptions, boundary conditions, and spatial dimensions.

\begin{enumerate}
    \item \textbf{General domains and boundary conditions.}
    While the theory is presented on the flat torus $\mathbb{T}$ for simplicity of notation (as it naturally eliminates boundary terms during integration by parts), all the results extend directly to bounded unidimensional domains endowed with homogeneous Neumann boundary conditions. The Neumann conditions perfectly cancel the boundary terms in Green's Second Identity during the adjoint state method transfer. Furthermore, the Hopf Lemma ensures that the maximum principle arguments hold even if the payoff is maximised exactly at the boundary of the domain.

    \item \textbf{Bilinear and non-linear interactions.} 
    The proof of \Cref{thm: convergence.to.flow} (convergence of the discrete steps to a continuous flow) remains unaltered for a wider class of models, such as those with bilinear interactions
    \begin{equation}\label{eq: bilinear}
        -\Delta\theta+(P(x)+m(x))\theta=f(x),\quad x\in \mathbb{T},
    \end{equation}
    or general non-linear models
    \begin{equation}\label{eq: nonlinear}
        -\Delta\theta+g(x,\theta)+m(x)\theta=f(x),\quad x\in \mathbb{T},
    \end{equation}
    provided that the existence and uniqueness of the solutions hold, and that the map $\mathscr{P}(\mathbb{T})\ni m\mapsto \theta[m]\in \mathscr{C}(\mathbb{T})$ remains continuous with respect to the weak topology. 
    
    Furthermore, the variational characterisation in \Cref{lem: analogous} holds for these non-linear models. However, a delicate issue arises in \Cref{lem: argmax}. The adjoint state method relies heavily on the strong maximum principle to force the mass update into $\argmax \theta$. This requires that the potential resulting from the linearisation of $g$ possess a constant sign. This hypothesis is unfortunately not fulfilled by the standard monostable nonlinearity $g(x,\theta)=-\theta(K(x)-\theta)$ utilised in our numerical tests. While the numerical algorithms clearly succeed in this regime, extending the rigorous proofs to non-monotone nonlinearities requires a more delicate analysis.

    \item \textbf{Extension to higher dimensions ($d \ge 2$).}
    The analytical framework was explicitly restricted to $d=1$ due to the specific topological properties of the state equation \eqref{eq: linear}. In dimensions $d \ge 2$, the Sobolev embedding $H^1 \hookrightarrow C^0$ (and consequently $H^1 \hookrightarrow L^\infty$) fails, and the fundamental solution of the Laplacian exhibits a singularity at the origin $G(x,x)=-\infty$, therefore, since $G$ cannot be Lipschitz, it  breaks the uniform boundedness and the Wasserstein-1 Lipschitz continuity established in \Cref{lem: basic.properties}.
    Without these properties, the uniform finite-difference bounds required for the Ascoli-Arzelà compactness argument in \Cref{thm: convergence.to.flow} cannot be guaranteed. Fully extending the dynamic convergence of the $\mathsf{TV}$-flow to $d \ge 2$ remains an open problem.

    However, the underlying optimisation geometry works in any dimension. The derivation of the optimal placement via the adjoint state method (\Cref{lem: argmax}) and the strict decrease via Taylor expansion (\Cref{lem: decrease}) are dimension-independent. 
    
    Most notably, the regularity result of \Cref{lem: ac} (absence of atoms in the optimal measure) becomes simpler and stronger in $d \ge 2$. Suppose $m$ contains a positive Dirac mass $m_{\text{pp}}\delta_{x_0}$. In $d \ge 2$, this point mass dominates the bounded terms of the PDE, forcing the Laplacian to behave as $\Delta \theta \approx m_{\text{pp}} \delta_{x_0}$. This induces a fundamental singularity where $\theta(x) \to -\infty$ as $x \to x_0$ (e.g., logarithmic in 2D, or $|x|^{2-d}$ in 3D). A point where the payoff collapses to negative infinity trivially contradicts the assumption that $x_0$ belongs to $\argmax \theta$. Consequently, we expect the stationary optimal measures of this class of harvesting mean field games to be absolutely continuous in any spatial dimension.
\end{enumerate}

\section{Numerical methods and tests} \label{sec: numerics}

We describe here the practical implementations of \cref{alg:best_response} and \cref{alg:furthest_mass}, reported, respectively, in \cref{alg: main} and \Cref{alg: eikonal}. Then, we conduct some numerical convergence tests and simulations both in the case where $\theta$ satisfies a linear equation and in the case where it satisfies a non linear one.

\subsection{A refined version of \Cref{alg:best_response} and \Cref{alg:furthest_mass}}
We report the detailed implementations of the Best Response and Eikonal-based algorithms with a dynamic time step size $\varepsilon$ on $\mathbb{T}$. Extensions to higher dimensions and different boundary conditions are straightforward.

\begin{algorithm}
    \caption{Best response algorithm in $\mathbb{T}$}\label{alg: main}
    \begin{algorithmic}[1]
        \Require $m_0\in \mathscr{P}(\mathbb{T}),\ 0 < \underline{\varepsilon} \leq \varepsilon_0,\ M \in \mathbb{N},\ \tau >0,\ \mu > 0,\ f: \R \times \mathbb{T} \to \R$
    
        \State $j \gets 0$
        \State $\theta_0 \gets \text{solution of} \ -\mu\Delta\theta=f(\theta,x)-m_0\theta,\quad x\in \mathbb{T} $ \Comment{\textcolor{blue}{\textit{Compute initial payoff}}}
        \State $R_0 \gets \max_{\mathbb{T}} \theta_0 - \min_{\supp\, m_0} \theta_0$ \Comment{\textcolor{blue}{\textit{Calculate maximum income gap}}}
        
        \While{ $R_j > \tau \And j<M$}
            \State $\ k \gets 0$
            \While{ $\varepsilon_k > \underline{\varepsilon}$} \Comment{\textcolor{blue}{\textit{Adaptive step-size (backtracking line search)}}}
                \State \Comment{\textcolor{blue}{\textit{Find threshold to identify the $\varepsilon_k$ worst-earning players}}}
                \State Find $\ \eta_k \in \mathbb{R}\ $ s.t.
                $$
                     m_k^- = m_{k}\ \chi_{\{\theta_k \leq \eta_k\}}, \quad \int_{\mathbb{T}} m_{k}^- \, \ d x = \varepsilon_k, \quad m_k^+ = m_k - m_k^- 
                $$
                
                \State \Comment{\textcolor{blue}{\textit{Shape new mass $\nu_k$ to match optimal profile on peaks}}}
                \State Find $\ C_k \in \mathbb{R}\ $ s.t.
                $$
                    \nu_k (x) = \left[f(x) - P(x)\theta_k - m^+(x)\right] \cdot \chi_{\{\theta_k \geq C_k\} }(x),
                    \quad
                    \int_{\mathbb{T}} \nu_k \ d x = \varepsilon_k,
                $$
                \State $m_{k+1} \gets m^+_k + \nu_k $ \Comment{\textcolor{blue}{\textit{Move players to optimal configuration}}}
                \State $\theta_{k+1} \gets \text{solution of} \ -\mu\Delta\theta=f(\theta,x)-m_{k+1}\theta,\quad x\in \mathbb{T} $ \Comment{\textcolor{blue}{\textit{Update payoff}}}
                \State $R_{k+1} \gets \max_{\mathbb{T}} \theta_{k+1} - \min_{\supp\, m_{k+1}} \theta_{k+1}$ \Comment{\textcolor{blue}{\textit{Evaluate new gap}}}
            
                \If{$ R_{k+1} \geq R_j$} \Comment{\textcolor{blue}{\textit{If gap increased, reject step and halve step-size}}}
                    \State $\varepsilon_{k+1} \gets \varepsilon_k /2$
                    \State $k \gets k+1$
                \Else \Comment{\textcolor{blue}{\textit{If gap decreased, accept step and proceed}}}
                    \State $m_{j+1} \gets m_{k+1}$
                    \State $R_{j+1} \gets R_{k+1}$
                    \State break
                \EndIf
            \EndWhile
            \State $j \gets j+1$
        \EndWhile
    \end{algorithmic}
\end{algorithm}

\begin{algorithm}
    \caption{Eikonal-based algorithm in $\mathbb{T}$ (highest-income minimisation)} \label{alg: eikonal}
    \begin{algorithmic}[1]
        \Require $m_0\in \mathscr{P}(\mathbb{T}),\ 0 < \underline{\varepsilon} \leq \varepsilon_0, \ M \in \mathbb{N},\ \tau >0,\ \mu > 0,\ f: \R \times \mathbb{T} \to \R$
        \State $j \gets 0$
        \State $\theta_0 \gets \text{solution of} \ -\mu\Delta\theta=f(\theta,x)-m_0\theta,\quad x\in \mathbb{T} $ \Comment{\textcolor{blue}{\textit{Compute initial payoff}}}
        \State $R_0 \gets \max_{\mathbb{T}} \theta_0 - \min_{\supp\, m_0} \theta_0$
        
        \While{ $R_j > \tau \And j<M$}
            \State $\ k \gets 0$
            \While{ $\varepsilon_k > \underline{\varepsilon}$} \Comment{\textcolor{blue}{\textit{Adaptive step-size search}}}
                \State Solve eikonal equation \eqref{eqn:eikonal} to get distance function $v_k$ \Comment{\textcolor{blue}{\textit{Find distance to peaks}}}
                
                \State \Comment{\textcolor{blue}{\textit{Identify the $\varepsilon_k$ players geographically furthest from peaks}}}
                \State Find $\ \eta_k \in \mathbb{R}\ $ s.t.
                $$
                     m_k^- = m_{k}\ \chi_{\{v_k \geq \eta_k\}}, \quad \int_{\mathbb{T}} m_{k}^- \, \ d x = \varepsilon_k, \quad m_k^+ = m_k - m_k^- 
                $$
                
                \State \Comment{\textcolor{blue}{\textit{Shape relocated mass $\nu_k$ to optimal stationary conditions}}}
                \State Find $\ C_k \in \mathbb{R}\ $ s.t.
                $$
                    \nu_k (x) = \left[f(x) - P(x)\theta_k - m^+(x)\right] \cdot \chi_{\{\theta_k \geq C_k\} }(x),
                    \quad
                    \int_{\mathbb{T}} \nu_k \ d x = \varepsilon_k,
                $$
                \State $m_{k+1} \gets m^+_k + \nu_k $ \Comment{\textcolor{blue}{\textit{Relocate the players}}}
                \State $\theta_{k+1} \gets \text{solution of} \ -\mu\Delta\theta=f(\theta,x)-m_{k+1}\theta,\quad x\in \mathbb{T} $
                \State $R_{k+1} \gets \max_{\mathbb{T}} \theta_{k+1} - \min_{\supp\, m_{k+1}} \theta_{k+1}$
            
                \If{$ R_{k+1} \geq R_j$} \Comment{\textcolor{blue}{\textit{Reject step if functional fails to decrease}}}
                    \State $\varepsilon_{k+1} \gets \varepsilon_k /2$
                    \State $k \gets k+1$
                \Else \Comment{\textcolor{blue}{\textit{Accept step otherwise}}}
                    \State $m_{j+1} \gets m_{k+1}$
                    \State $R_{j+1} \gets R_{k+1}$
                    \State break
                \EndIf
            \EndWhile
            \State $j \gets j+1$
        \EndWhile  
    \end{algorithmic}
\end{algorithm}

Observe that the minimising movement induced by \eqref{eq: mm.BR} or \eqref{eq: mm.GF} can be seen as an implicit Euler discretisation of the curves $m\in \mathscr{C}((0,T),\mathscr{P}(\mathbb{T}))$ found in \Cref{thm: main} and \Cref{thm: main.2}, whereas the algorithms have mixed elements of an explicit and implicit Euler discretisations. As a matter of fact, as proved in \cref{lem: argmax}, for a sequence $\{m^k\}_{k\ge 0}$ satisfying \eqref{eq: mm.BR} or \eqref{eq: mm.GF}, the positive part of the difference $m^{k+1}-m^k$ must be supported in $\argmax\theta[m^{k+1}](x)$, for all $k \ge 0$. We exploit this property in \cref{alg: main} and \cref{alg: eikonal} in order to avoid the computation of the implicit argument minimum on the right hand side of \eqref{eq: mm.BR} and \eqref{eq: mm.GF}. In fact, we impose explicitly that the mass update be of size $\varepsilon$ and that its positive part be supported in $\argmax \theta[m^{k+1}](x)$. The implicit element is on determining the shape that such mass will have, that is done thinking on \eqref{eq: implications.NE}.  Furthermore, the two algorithms have an adaptive temporal step size. The requirement of having a small enough $\varepsilon$ is necessary to avoid oscillations on the solution, as shown numerically in \Cref{fig: non.convergence}. Additionally, although a dynamic step size may result in a non-monotonic sequence $\{\varepsilon_j\}_{j\geq0}$, it produces a monotonic decrease in the functional to be minimised (see \Cref{fig:adaptive_epsilon} for a numerical example).
\begin{figure}[h]
    \centering
    \includegraphics[width=0.7\linewidth]{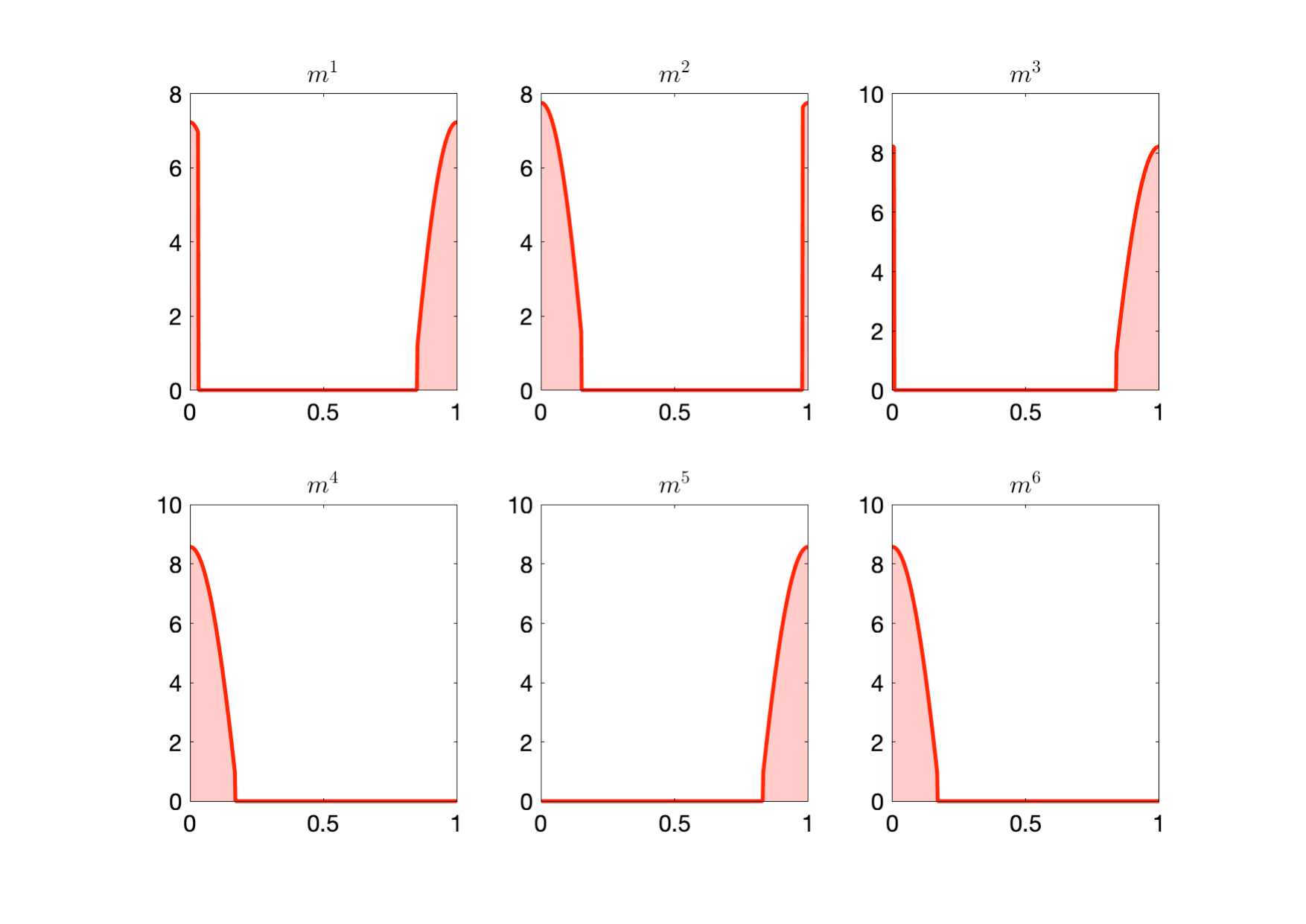}
    \caption{Example of non-convergence of \Cref{alg: main} with fixed step-size $\varepsilon=1$.}
    \label{fig: non.convergence}
\end{figure}

\begin{figure}[h]
    \centering
    \includegraphics[width=0.5\linewidth]{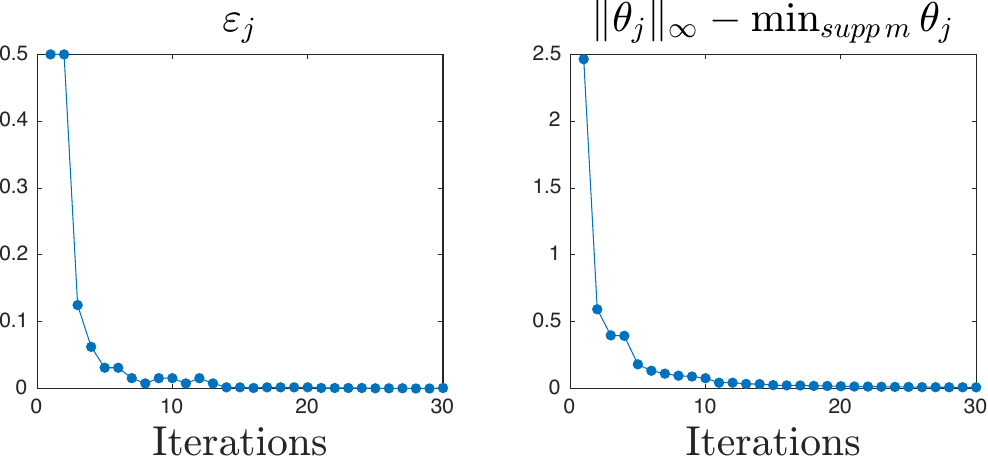}
    \caption{Example of non-monotonic sequence $\{\varepsilon_j\}_{j\geq0}$ producing a strictly decreasing sequence $\{R_j\}_{j\geq0}$ for \Cref{alg: main}.}
    \label{fig:adaptive_epsilon}
\end{figure}

\subsection{Numerical tests}

Before presenting the results, we briefly clarify the scope of the simulations with respect to the theoretical framework developed in \Cref{sec:main_results}. Specifically, we distinguish between the settings that are rigorously supported by our proofs and those that serve as broader empirical explorations.

While the theoretical results are obtained on the torus $\mathbb{T}$ for notational simplicity (eliminating boundary terms during integration by parts), all numerical simulations are performed on bounded domains with homogeneous Neumann boundary conditions. From an ecological modelling perspective, zero-flux Neumann boundaries are the natural choice, representing a closed environment where agents cannot enter or exit the habitat. As detailed in \Cref{section: remarks}, the theoretical framework extends perfectly to this Neumann setting.

The one-dimensional (1D) simulations for the linear case fall entirely within the theoretical framework. In 1D, the properties of \eqref{eq: linear} guarantee the uniform boundedness and convergence of both generalised minimising movement schemes. Conversely, the two-dimensional (2D) simulations presented in this section push the algorithms beyond their proven theoretical limits. The non-linear harvesting model~\eqref{eq: elliptic_nonlinear} utilises a monostable nonlinearity that does not satisfy the strict monotonicity condition required to apply the strong maximum principle in the adjoint state method (\Cref{lem: argmax}). Consequently, the non-linear simulations, even in 1D, fall outside the strict analytical guarantees. Nevertheless, we include these 2D and non-linear tests to demonstrate the practical robustness and broad algorithmic applicability of both \Cref{alg: main} and \Cref{alg: eikonal} in more complex, realistic environments where analytical proofs are currently out of reach.

We now conduct numerical convergence tests and simulations, addressing first the linear case, followed by the non-linear one. In particular, we address both the decrease of the functional to be minimised in \Cref{alg: main} and \Cref{alg: eikonal}, and the convergence of the discrete flow to a gradient flow as $\varepsilon \downarrow 0$. Finally, we show the results obtained with both algorithms in various simulations, both on one- and two-dimensional domains.

For the 1D tests, the computational domain was normalised to $\Omega=[0,1]$, over which we set a uniform grid $x_i,\ i=0,\ldots, 1000$. For the two-dimensional ones, we set $\Omega=[0,1]\times[0,1]$ with the same boundary conditions and a uniform grid $(x_i,y_j)$, for $i,j=0,\ldots, 100$. Unless differently specified, the initial parameters are $\varepsilon_0 = 0.1,\ \underline{\varepsilon} = 10^{-15},\ M=100,\ \tau=\Delta x$. For both algorithms, all the integrals are computed with a trapezoidal quadrature rule for the one-dimensional simulations and with a rectangular quadrature rule for the two-dimensional ones. The levels $\eta_k$ and $C_k$ in Steps 7--8 of \Cref{alg: main} and 8--9 of \Cref{alg: eikonal} are found via a bisection procedure. Finally, we remark that the numerical solution of the eikonal equation in \Cref{alg: eikonal} is rather delicate, as in case of a disconnected target set (i.e. $\argmax \theta$), the solution is non-differentiable and the equation must be regarded only in the viscosity sense. Consequently, numerical schemes that assume regularity may not be suitable. We implemented, both in 1D and 2D, a Fast Marching semi-Lagrangian scheme \cite{sethian1999fast, falcone2013sl}, which has the property to converge to the viscosity solution of \eqref{eqn:eikonal} in case of non-regularity.

\subsubsection{The linear case}\label{sec:numerics_linear}

We begin with the results of various simulations obtained with \Cref{alg: main} and \Cref{alg: eikonal} in the linear case, that is when $\theta[m]$ is given by
\begin{equation*} \label{eqn:linear_explicit}
    -\mu \Delta\theta+P(x)\theta=f(x)-m(x), \quad \text{with } \mu=0.1.
\end{equation*}
This PDE -- which is just \eqref{eq: linear} with an explicit viscosity coefficient $\mu >0$ -- is approximated with a central finite-difference scheme in both one and two dimensions.

In \Cref{fig:funct_conv_linear} we can see how the functional to be minimised by each algorithm decreases in time, first in a simpler case, with $f(x)=4x+1$, then with a more complex $f(x)=\max\{9x \sin(5\pi x), 0\}$. In both cases, the curve we obtain is compared with a linear interpolant of the first and last points and also one that is proportional to the square root of the transported mass.
\begin{figure}[h!]
    \centering
    \includegraphics[width=0.49\linewidth]{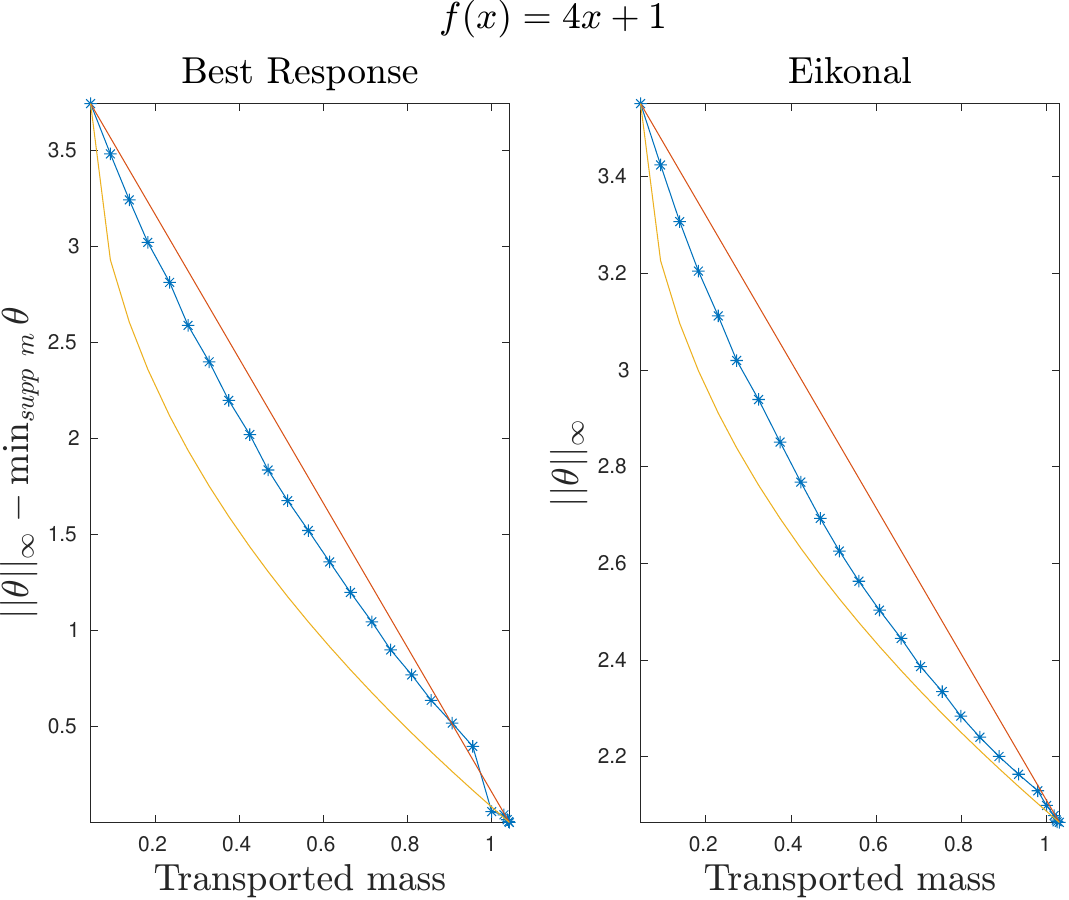} \hfill
    \includegraphics[width=0.49\linewidth]{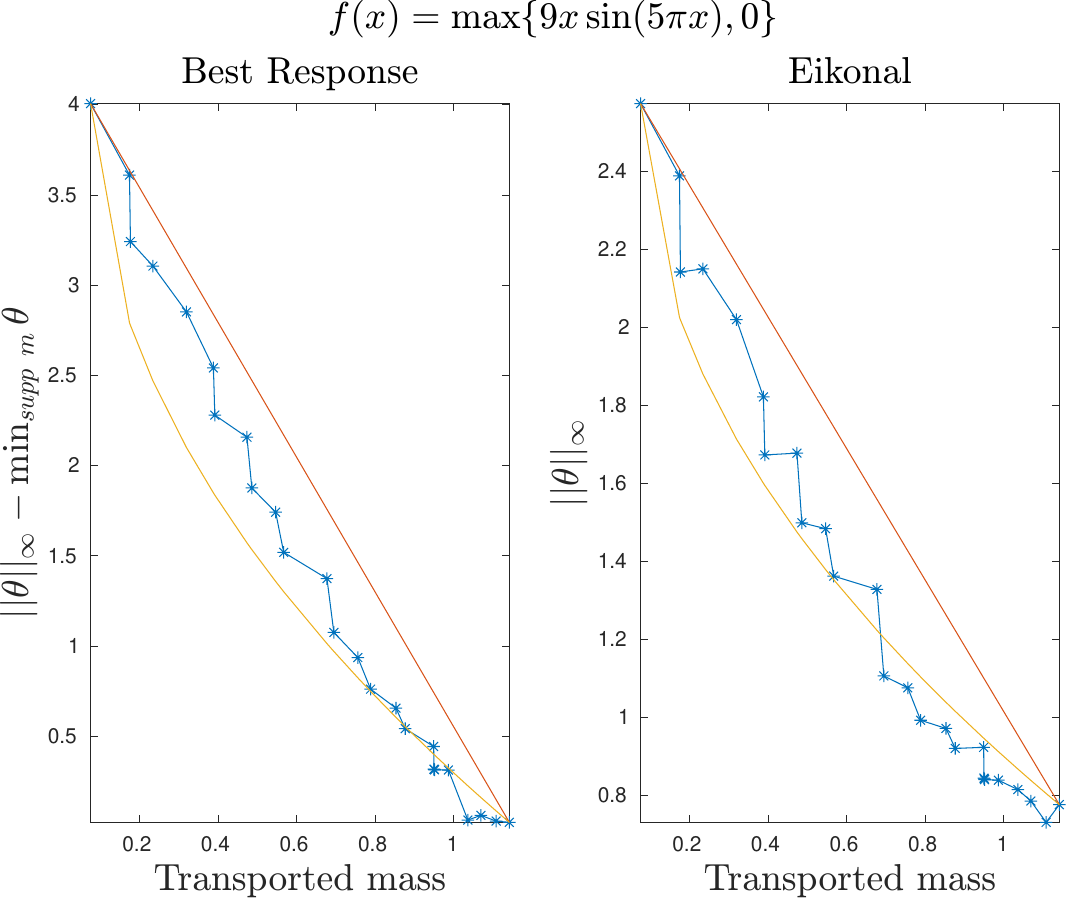}
    \caption{Convergence of the functional to be minimised by \Cref{alg: main} (left) and \Cref{alg: eikonal} (right) in the linear case. The dotted blue curve is the one obtained numerically, the red one is a linear interpolant of the two extrema and the yellow one is an interpolant proportional to the square root of the transported mass.}
    \label{fig:funct_conv_linear}
\end{figure}

Secondly, we test numerically, in \Cref{fig:flow_conv_linear}, the convergence of the sequences generated by the two algorithms to a gradient flow as the step-size $\varepsilon$ decreases. Specifically, starting from $\varepsilon_0 = 0.1$, we compute the $\mathsf{TV}$ distance between the discrete sequences $m^{\big\lfloor\frac{t}{\varepsilon_0/2^k}\big\rfloor}$ and $m^{\big\lfloor\frac{t}{\varepsilon_0/2^{k+1}}\big\rfloor}$, $k=0,\ldots,6$, where time is represented by the cumulative mass that is transported throughout the iterations. Additionally, we plot the maximum of this distance against the step-size epsilon.
\begin{figure}[h!]
    \centering
   \begin{tabular}{c | c}
    \Cref{alg: main} & \Cref{alg: eikonal} \\
    \includegraphics[width=0.48\linewidth, height=6.5cm]{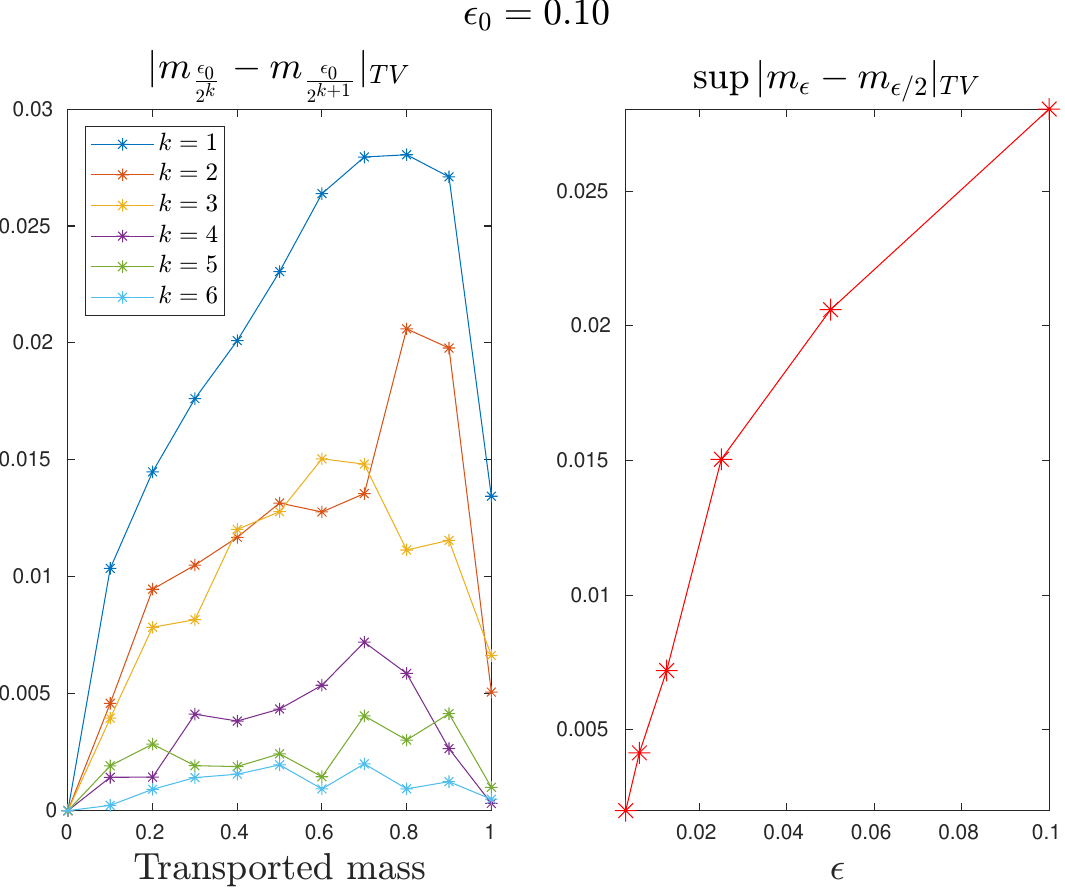} &
    \includegraphics[width=0.48\linewidth, height=6.5cm]{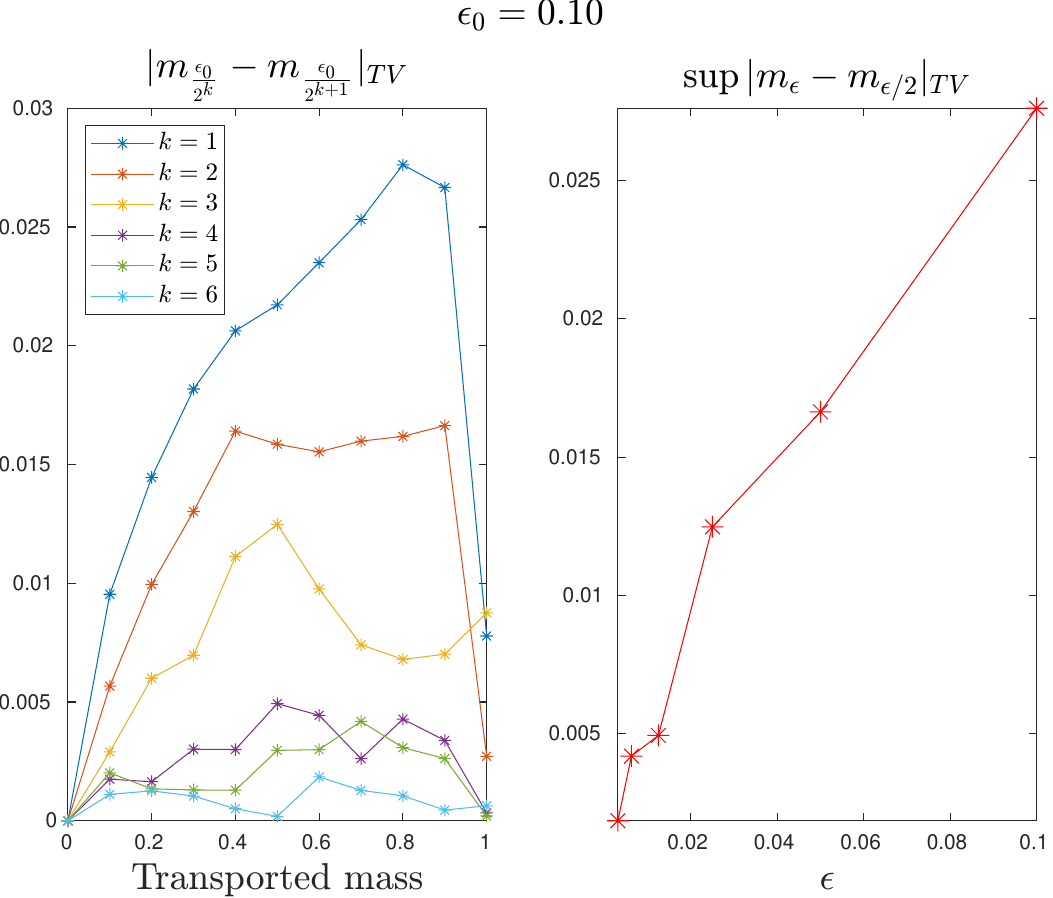} \\
    \end{tabular}
    \caption{Numerical convergence test of the sequence $m^j$ generated by \Cref{alg: main} and \Cref{alg: eikonal} to a gradient flow as the step-size $\varepsilon$ decreases, in the linear case.}
    \label{fig:flow_conv_linear}
\end{figure}

In what follows, we show some numerical simulations. In \Cref{tab:1d_linear_sol} we can see that both algorithms succeed in finding the same configuration -- which, as already mentioned, must be intended as a $\tau$-Nash equilibrium due to the numerical approximation error -- for various choices of the source term $f(x)$ and $P(x)\equiv0.5$. Additionally, \Cref{tab:iterations_1d_linear} shows that the number of iterations needed for convergence is larger for \Cref{alg: eikonal}, although of the same order of magnitude. We point out that the third test, with $f(x)=15(\cos(2\pi x)+1)$, is the same setting that, for fixed $\varepsilon=1$, does not achieve convergence (example in \Cref{fig: non.convergence}).
\begin{figure}[h!]
    \centering
    \begin{tabular}{c c}
    \Cref{alg: main} & \Cref{alg: eikonal} \\
    \toprule
    \multicolumn{2}{c}{\small $f(x)=4x+1$} \\
    \includegraphics[width=0.25\linewidth]{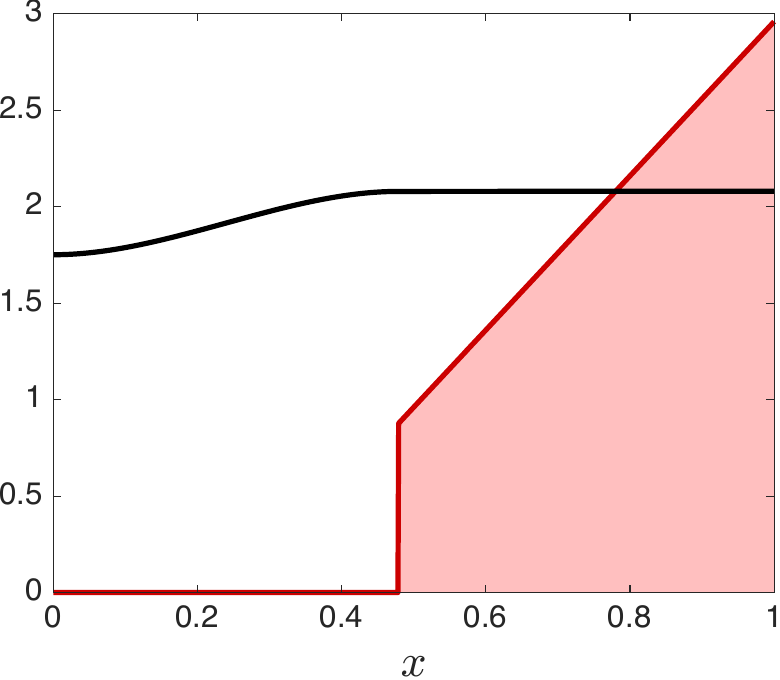} & \includegraphics[width=0.25\linewidth]{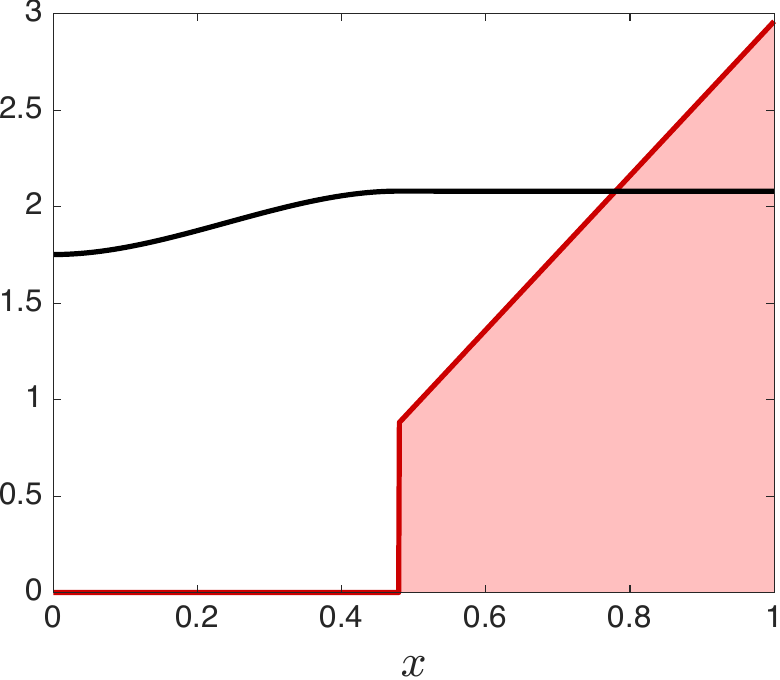} \\
    \multicolumn{2}{c}{\ } \\
    \multicolumn{2}{c}{\small $f(x)=\max(0, 9x\sin(5\pi x))$} \\
    \includegraphics[width=0.25\linewidth]{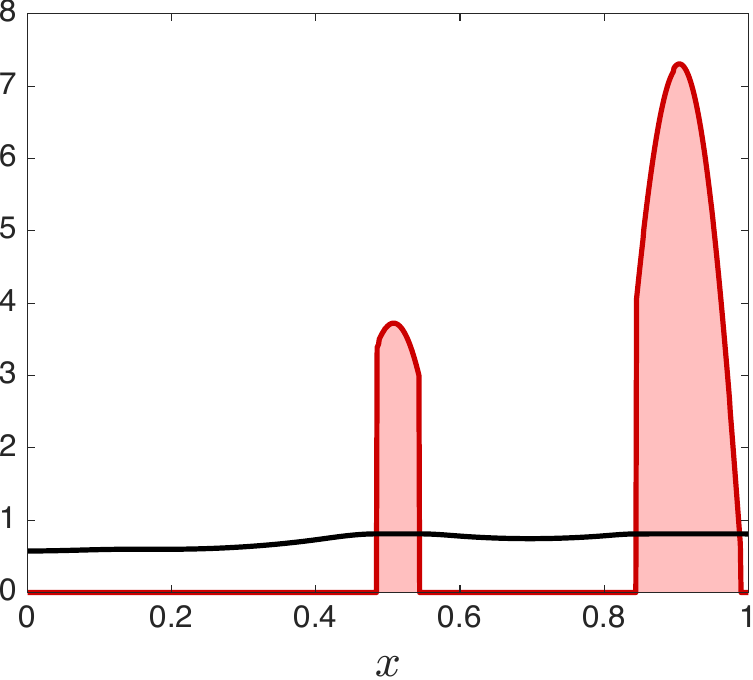} & \includegraphics[width=0.25\linewidth]{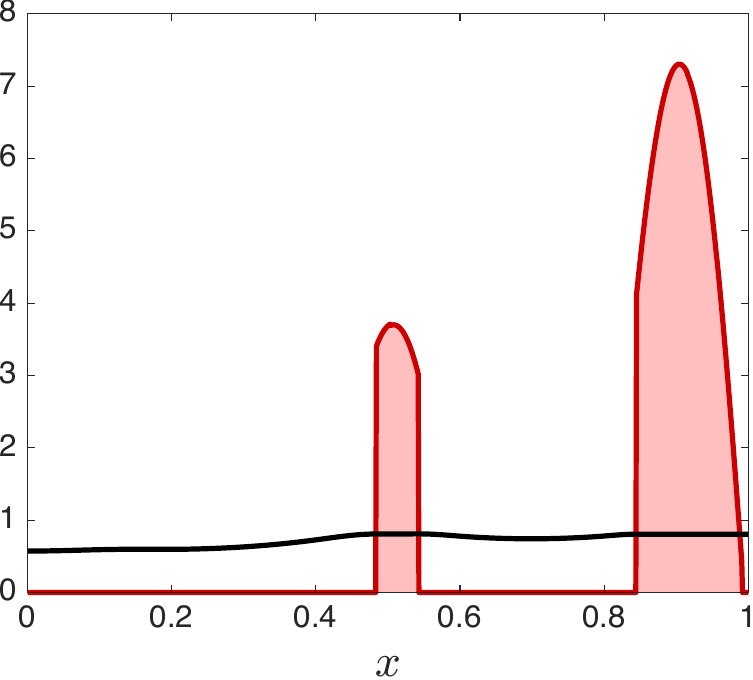} \\
    \multicolumn{2}{c}{\ } \\
    \multicolumn{2}{c}{\small $f(x)=15(\cos(2\pi x)+1)$} \\
    \includegraphics[width=0.25\linewidth]{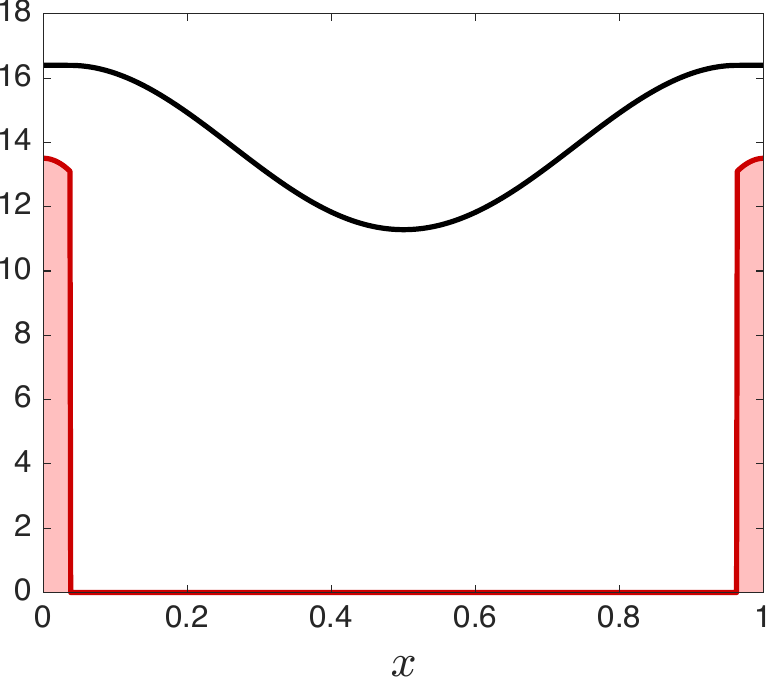} & \includegraphics[width=0.25\linewidth]{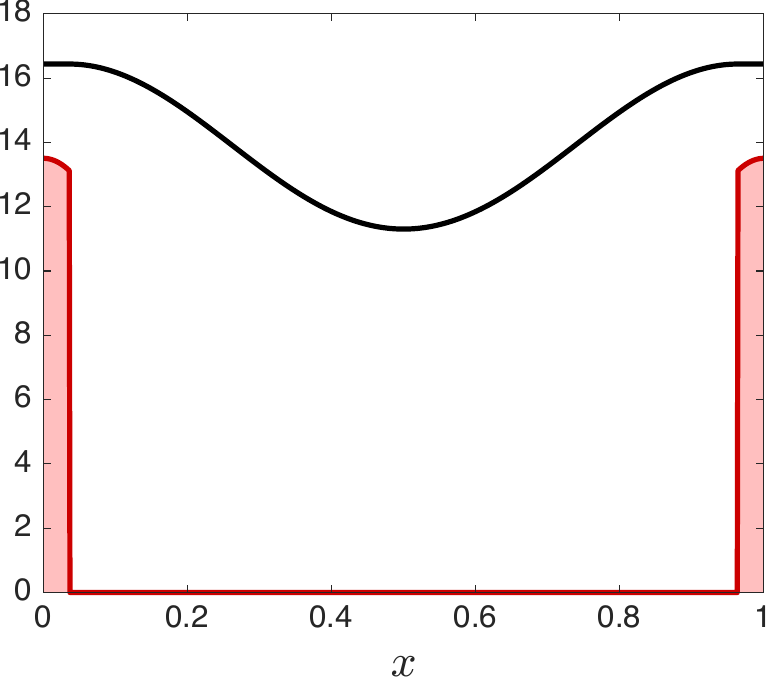} \\
    \end{tabular}
    \caption{Solutions given by \Cref{alg: main} (left column) and \Cref{alg: eikonal} (right column) for various choices of $f$ in one dimension. The red curve represents $m(x)$, the black one represents $\theta[m](x)$.}
    \label{tab:1d_linear_sol}
\end{figure}

\begin{table}[h!]
    \centering
    \begin{tabular}{r c c}
    \toprule
    \ & \Cref{alg: main} & \Cref{alg: eikonal} \\
    \midrule
    $f(x)=4x+1$ & 14 iter. & 15 iter. \\ 
    $f(x)=\max(0, 9x\sin(5\pi x))$ & 30 iter. & 43 iter. \\
    $f(x)=15(\cos(2\pi x)+1)$ & 16 iter. & 20 iter. \\
    \bottomrule
    \end{tabular}
    \caption{Number of iterations until convergence for \Cref{alg: main} and \Cref{alg: eikonal} for the simulations in \Cref{tab:1d_linear_sol}.}
    \label{tab:iterations_1d_linear}
\end{table}

\Cref{tab:2d_linear_sol} reports analogous simulations on a two-dimensional domain. Here, $P(x)\equiv 1$ and the different choices of $f(x)$ are reported on the figures. In particular, in order to test the robustness of both algorithms, $f$ is a sum of cosine functions with random amplitudes and frequencies in the second test. In this case, we observe slight differences in the two solutions, due to numerical accuracy, and \Cref{tab:iterations_2d_linear} shows that in 2D the number of iterations needed by \Cref{alg: eikonal} is about twice as much as those needed by \Cref{alg: main}.
 \clearpage 
\begin{figure}[H]
    \centering
    \begin{tabular}{c c}
    \Cref{alg: main} & \Cref{alg: eikonal} \\
    \toprule
    \multicolumn{2}{c}{\small $f(x,y)=5\exp\left(-\frac{(x-1)^2+(y-1)^2}{0.5}\right)$} \\
    \includegraphics[width=0.3\linewidth]{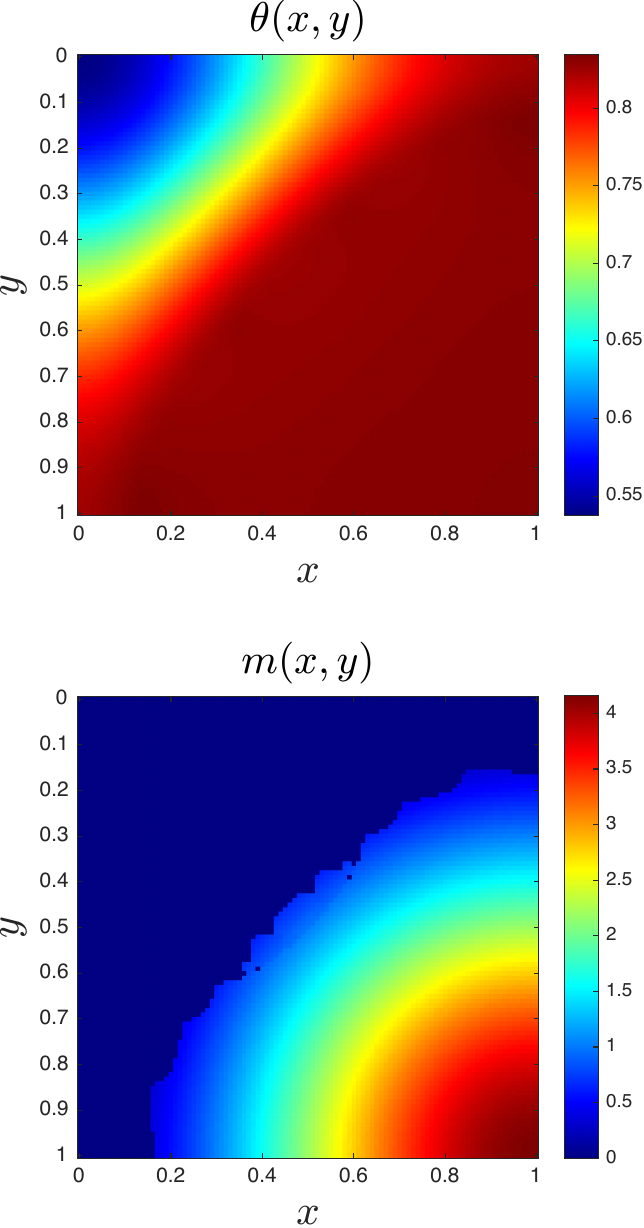} & \includegraphics[width=0.3\linewidth]{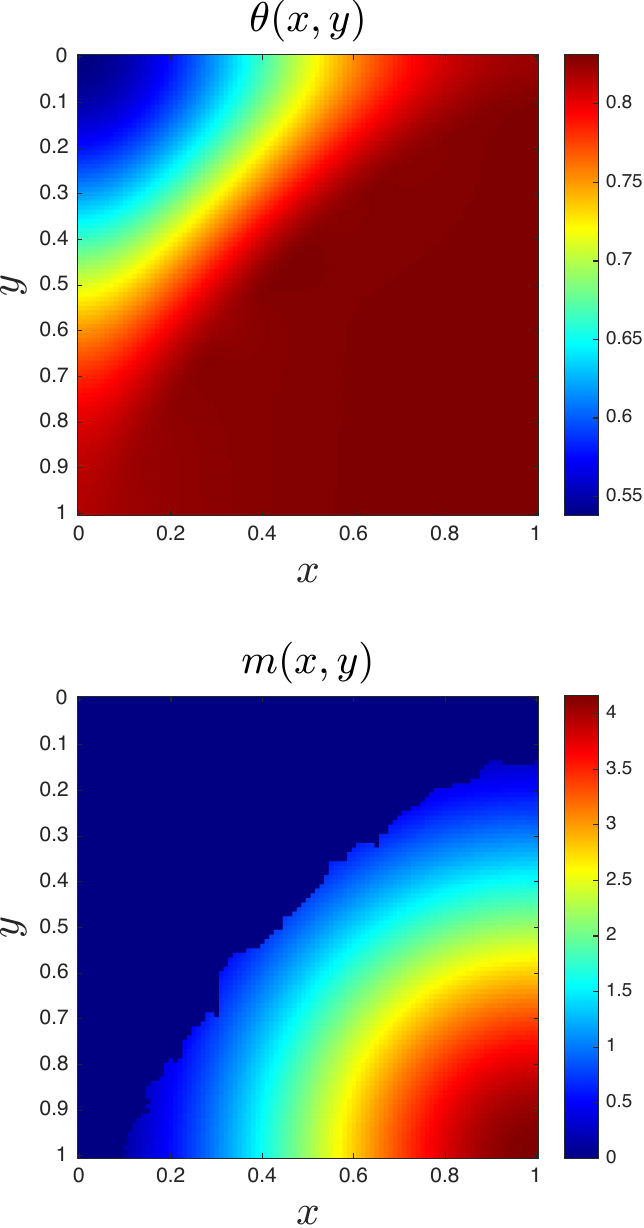} \\
    \multicolumn{2}{c}{\ } \\
    \multicolumn{2}{c}{\small $f(x,y)= \max\left( 0, 4\sum_{i=1}^4 \cos(a_i \pi x)\cos(b_i \pi y) \right), \quad a_i, b_i \sim \mathcal{U}[0,10] $} \\
    \includegraphics[width=0.3\linewidth]{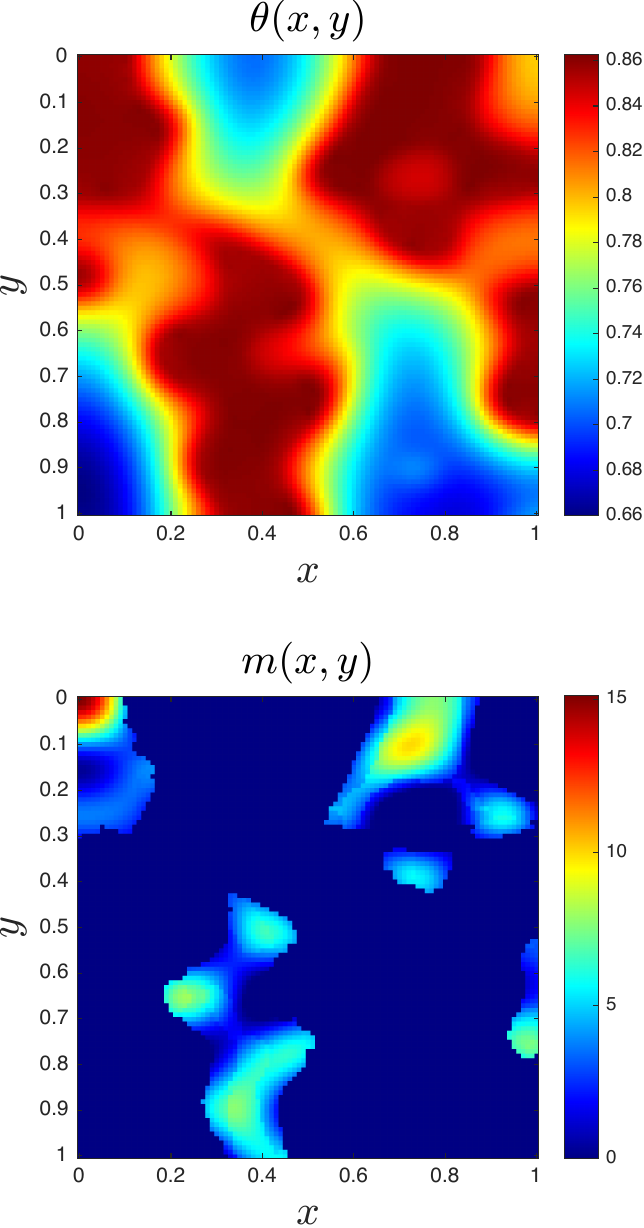} & \includegraphics[width=0.3\linewidth]{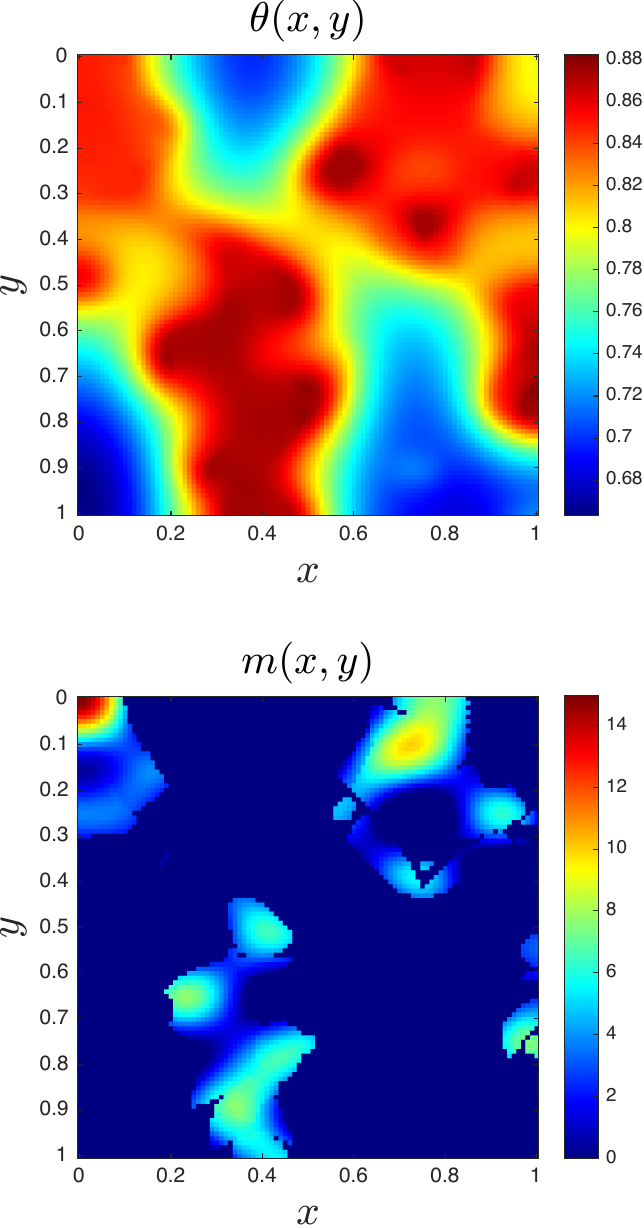} \\
    \bottomrule
    \end{tabular}
    \caption{Solutions given by \Cref{alg: main} (left column) and \Cref{alg: eikonal} (right column) for different choices of $f$ in two dimensions.}
    \label{tab:2d_linear_sol}
\end{figure}

\begin{table}[h!]
    \centering
    \begin{tabular}{r c c}
    \toprule
    \ & \Cref{alg: main} & \Cref{alg: eikonal} \\
    \midrule
    {\small $f(x,y)=5\exp\left(-\frac{(x-1)^2+(y-1)^2}{0.5}\right)$} & 8 iter. & 16 iter. \\ 
    {\small \makecell[r]{ $f(x,y)= \max\left( 0, 4\sum_{i=1}^4 \cos(a_i \pi x) \cos(b_i \pi y) \right),$ \\ $a_i, b_i \sim \mathcal{U}[0,10] $}} & 10 iter. & 24 iter. \\
    \bottomrule
    \end{tabular}
    \caption{Number of iterations until convergence for \Cref{alg: main} and \Cref{alg: eikonal} for the simulations in \Cref{tab:2d_linear_sol}. For these numerical tests, the maximum $\varepsilon$ allowed was $0.5$.}
    \label{tab:iterations_2d_linear}
\end{table}

In conclusion, to further test the robustness of both algorithms and to check whether the number of iterations required for convergence varies depending on the initial mass distribution, we run both \Cref{alg: main} and \Cref{alg: eikonal} with twelve random initialisations $m_i,\ i=1,\ldots,12$ generated as 
$$
m_i(x) = \max\left\{0, \sum_{j=1}^5 a_j \sin(b_j \pi x)\right\}, \quad a_j,b_j \sim \mathcal{U}[1,10],
$$
and normalised so that $\int_\Omega m_i \ d x =1$. A graphical representation of those used in this test is shown in \Cref{fig:random_masses}. The corresponding equilibria found by \Cref{alg: main} and \Cref{alg: eikonal} are reported, respectively, in \Cref{fig:final_masses_BR} and \Cref{fig:final_masses_GF}, whereas the iteration count can be found in \Cref{tab:stress_test_linear}. We observe that the initial distribution does influence the number of iterations, but the order of magnitude stays the same.
\begin{figure}[H]
    \centering
    \includegraphics[width=0.7\linewidth]{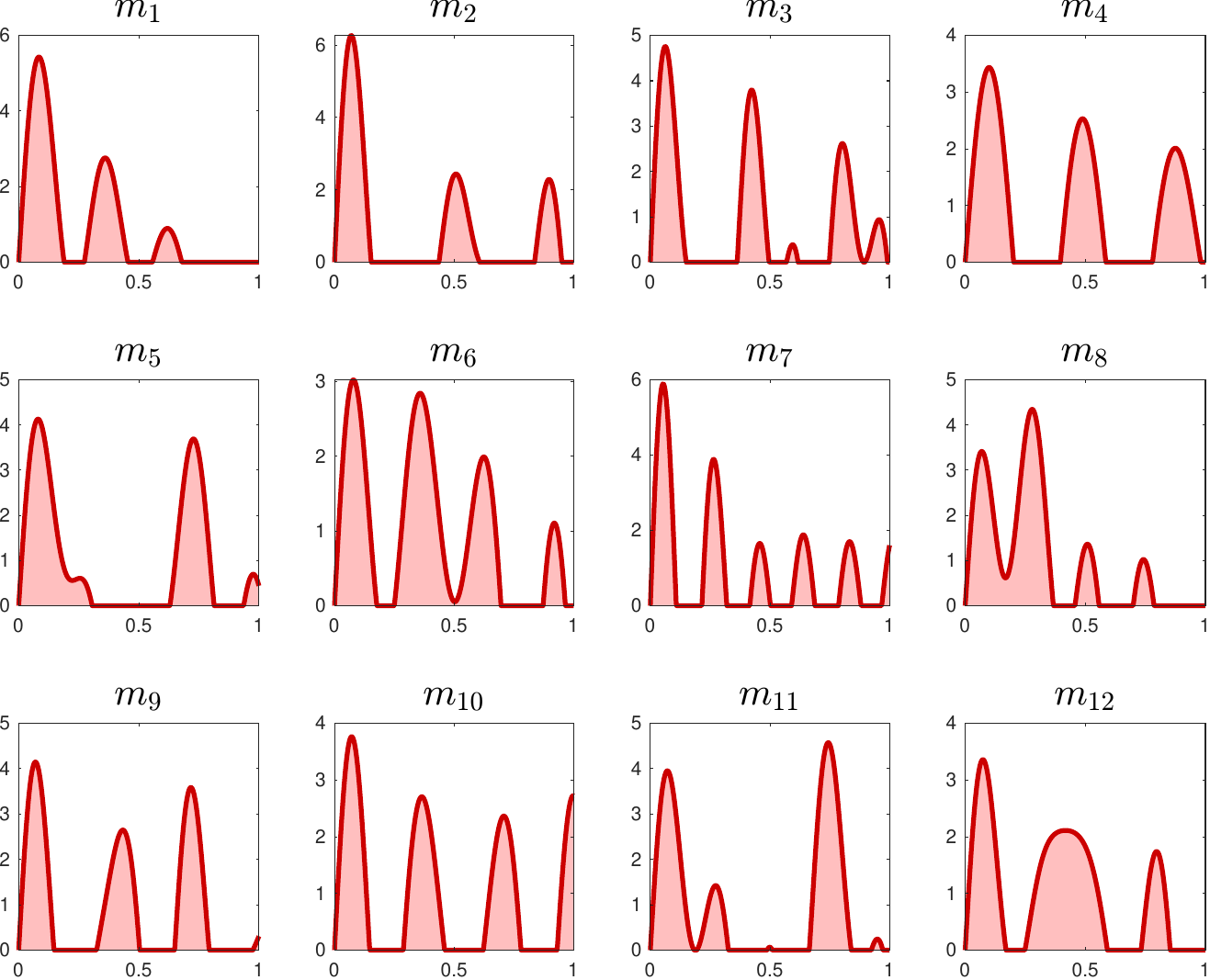}
    \caption{Initialisations $m_i,\ i=1,\ldots,12$ for \Cref{alg: main} and \Cref{alg: eikonal} for the convergence test. }
    \label{fig:random_masses}
\end{figure}

\begin{figure}[H]
    \centering
    \includegraphics[width=0.7\linewidth]{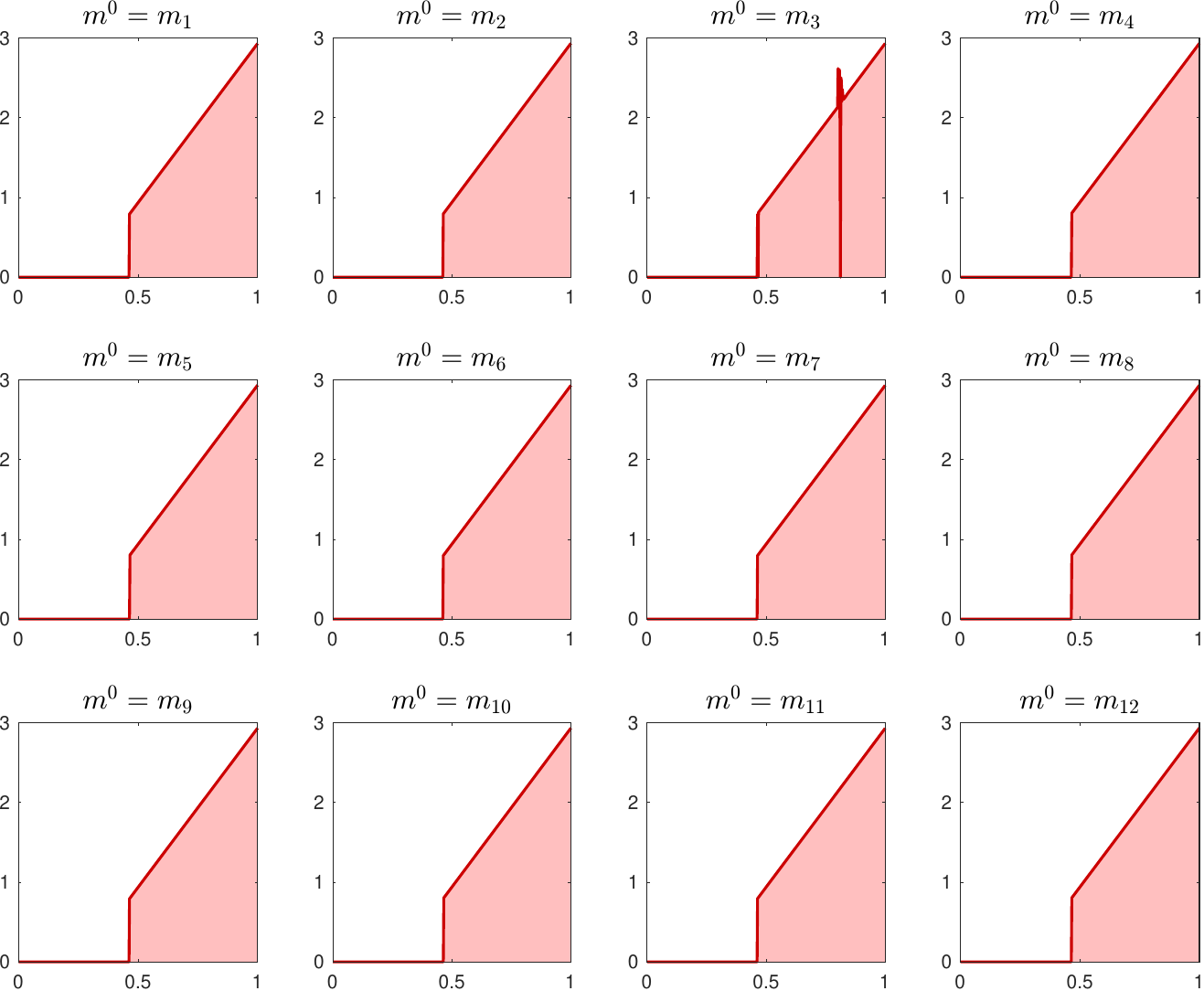}
    \caption{Solutions given by \Cref{alg: main} initialised with the $m_i,\ i=1,\ldots,12$ reported in \Cref{fig:random_masses}.}
    \label{fig:final_masses_BR}
\end{figure}

\begin{figure}[H]
    \centering
    \includegraphics[width=0.7\linewidth]{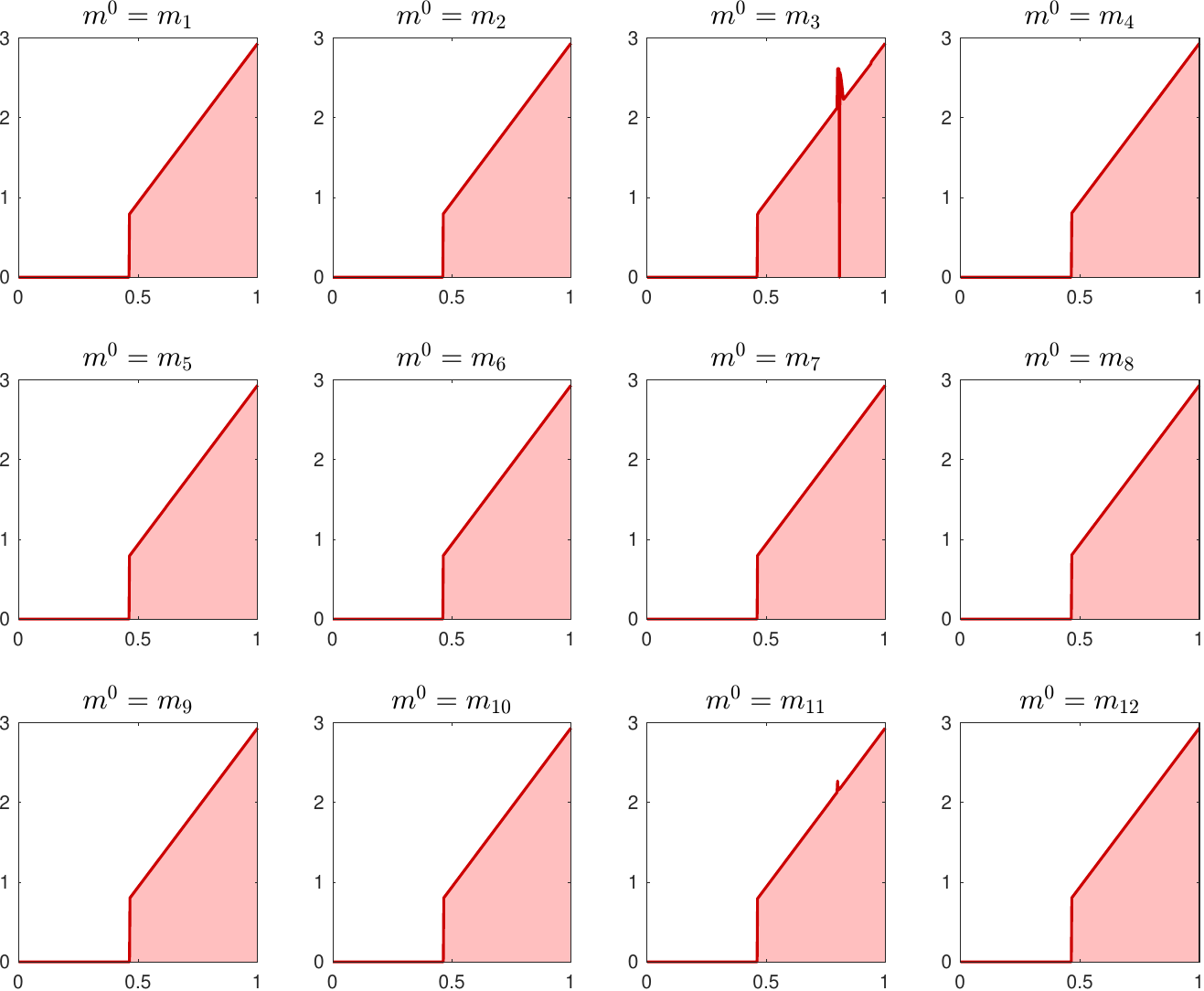}
    \caption{Solutions given by \Cref{alg: eikonal} initialised with the $m_i,\ i=1,\ldots,12$ reported in \Cref{fig:random_masses}.}
    \label{fig:final_masses_GF}
\end{figure}

\begin{table}[H]
    \centering
    \begin{tabular}{r c c}
    \toprule
    \ & \Cref{alg: main} & \Cref{alg: eikonal} \\
    \midrule
    $m_1$ & 8 & 8 \\
    $m_2$ & 8 & 8 \\
    $m_3$ & 12 & 10 \\
    $m_4$ & 7 & 7 \\
    $m_5$ & 7 & 7 \\
    $m_6$ & 7 & 7 \\
    $m_7$ & 7 & 7 \\
    $m_8$ & 8 & 8 \\
    $m_9$ & 7 & 6 \\
    $m_{10}$ & 7 & 7 \\
    $m_{11}$ & 6 & 21 \\
    $m_{12}$ & 8 & 8 \\
    \bottomrule
    \end{tabular}
    \caption{Number of iterations until convergence for \Cref{alg: main} and \Cref{alg: eikonal}  with $f(x)=4x$, initialised with random mass distributions $m_i,\ i=1,\ldots,12$.}
    \label{tab:stress_test_linear}
\end{table}

\subsubsection{The nonlinear case}

Finally, we apply \Cref{alg: main} and \Cref{alg: eikonal} to the bilinear interaction case, i.e. when $\theta[m]$ is the solution of 
\begin{equation} \label{eq:nonlinear_numer}
-\mu \Delta \theta = \theta(K(x)-\theta) - m\theta, \quad \text{with } \mu=0.1,
\end{equation}
which is the second equation in \eqref{eq: ergodic} with an explicit viscosity coefficient $\mu >0$ and $f(x,\theta)=-\theta(K(x)-\theta)$. Regarding its numerical approximation, it is worth noting that, because of our choice of $f$, \eqref{eq:nonlinear_numer} always has at least the trivial solution $\theta \equiv 0$. For this reason, Newton or quasi-Newton iterations are not advisable, as their convergence to one solution or the other is unpredictable \textit{a priori} and highly sensitive to the initialisation. Motivated by \cite{lions1982semilinear}, we opted for the numerical minimisation of the functional
\begin{equation*}
    \int_\Omega \frac{1}{2} |\nabla\theta|^2 - \frac{1}{\mu} F(\theta)\ dx,
\end{equation*}
where $F$ is a primitive of the right-hand side of \eqref{eq:nonlinear_numer} with respect to $\theta$. Furthermore, in this case, we modify Step 8 of \Cref{alg: main} and Step 9 of \Cref{alg: eikonal} to the following:
\begin{algorithmic}
   \State Find $\ C_k \in \mathbb{R}\ $ s.t.
                $$
                    \overline{\theta}_k = \max_{\{\theta_k \geq C_k\} } \theta_k, 
                    \quad
                    \nu_k (x) = \left[\frac{f(x,\overline{\theta}_k)}{\overline{\theta}_k} - m^+(x)\right]\ \chi_{\{\theta_k \geq C_k\} }(x),
                    \quad
                    \int_\Omega \nu_k \ d x = \varepsilon_k
                $$
\end{algorithmic}
in accordance with the different elliptic equation for $\theta$.

First, we repeat the convergence tests also in the non-linear case. In particular, the convergence over time of the functionals to be minimised by \Cref{alg: main} and \Cref{alg: eikonal} is shown in \Cref{fig:funct_conv_NONlinear}, whereas the convergence of the discrete sequence $m^k$ to a gradient flow as the step-size $\varepsilon$ decreases can be seen in \Cref{fig:flow_conv_NONlinear}.
\begin{figure}[H]
    \centering
    \includegraphics[width=0.48\linewidth]{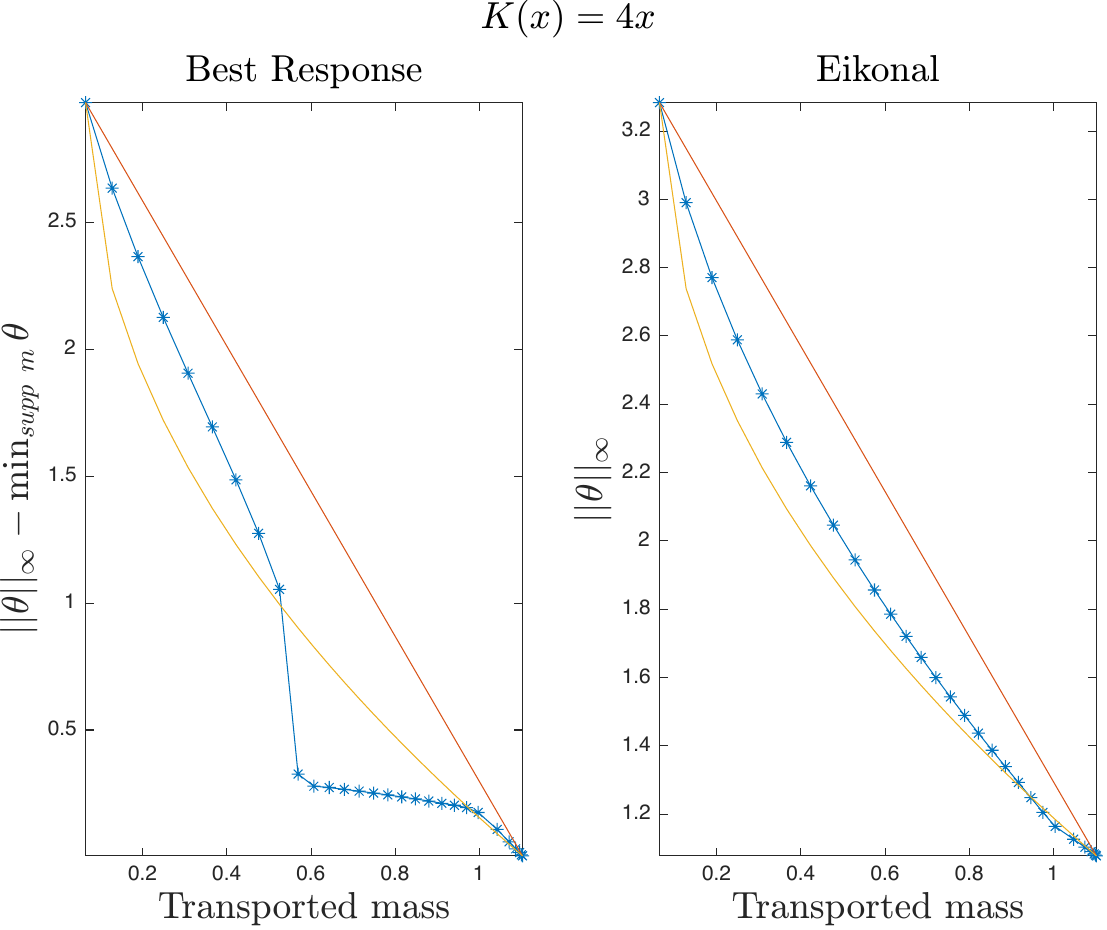} \hfill
    \includegraphics[width=0.48\linewidth]{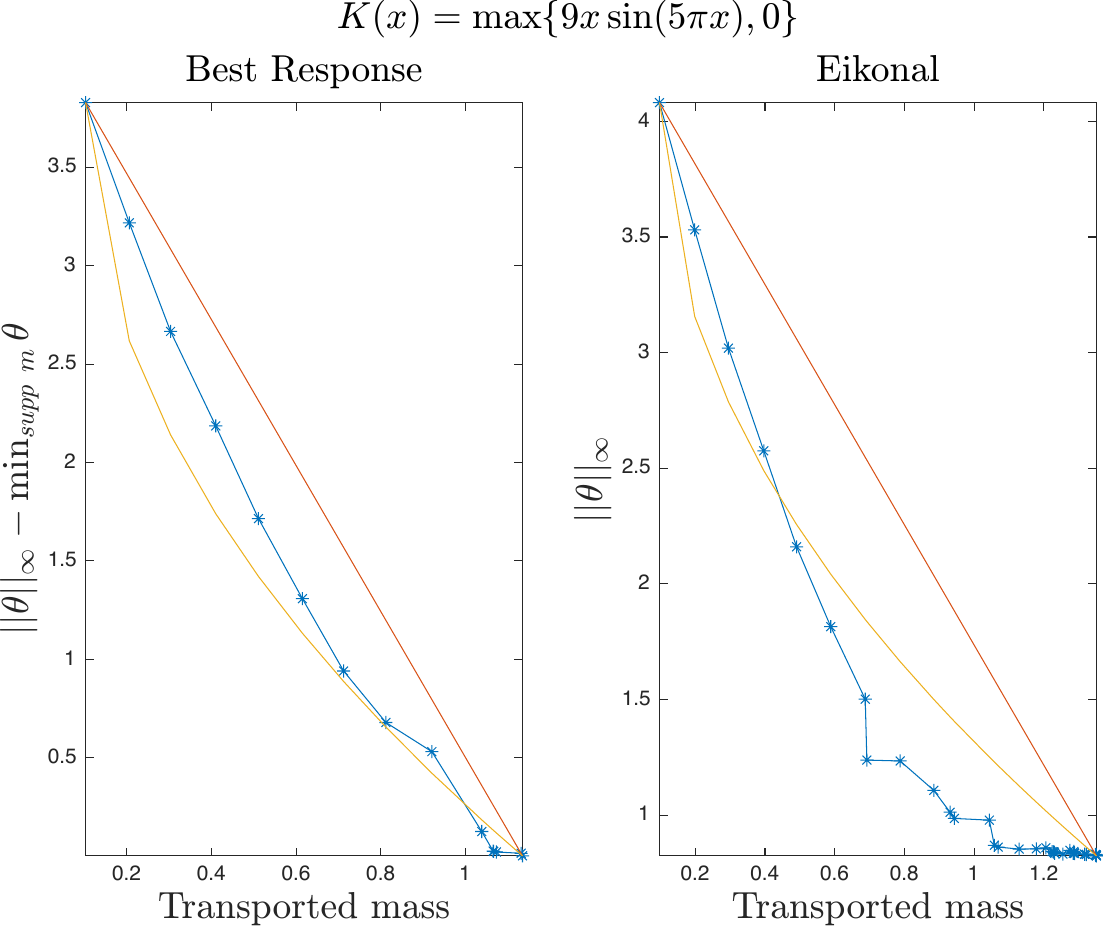}
    \caption{Convergence of the functional to be minimised by \Cref{alg: main} (left) and \Cref{alg: eikonal} (right) in the non-linear case. The dotted blue curve is the one obtained numerically, the red one is a linear interpolant of the two extrema and the yellow one is an interpolant proportional to the square root of the transported mass.}
    \label{fig:funct_conv_NONlinear}
\end{figure}

\begin{figure}[H]
    \centering
   \begin{tabular}{c | c}
    \Cref{alg: main} & \Cref{alg: eikonal} \\
    \includegraphics[width=0.48\linewidth]{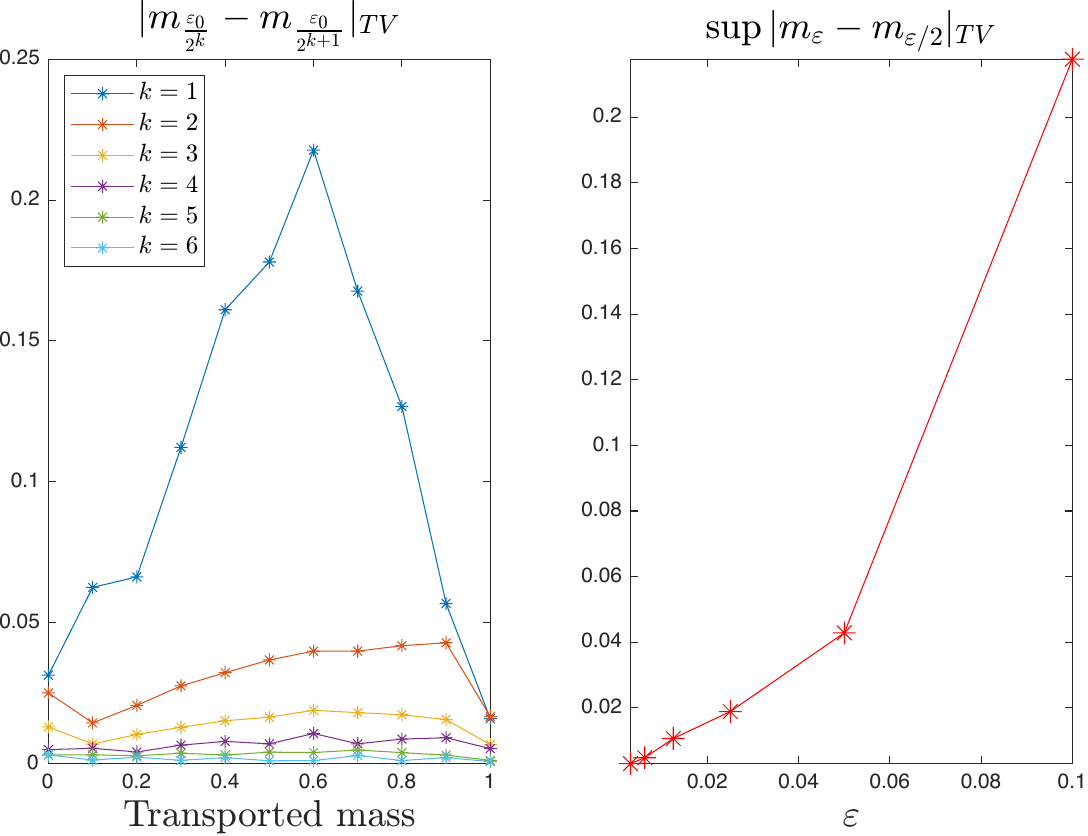} &
    \includegraphics[width=0.48\linewidth]{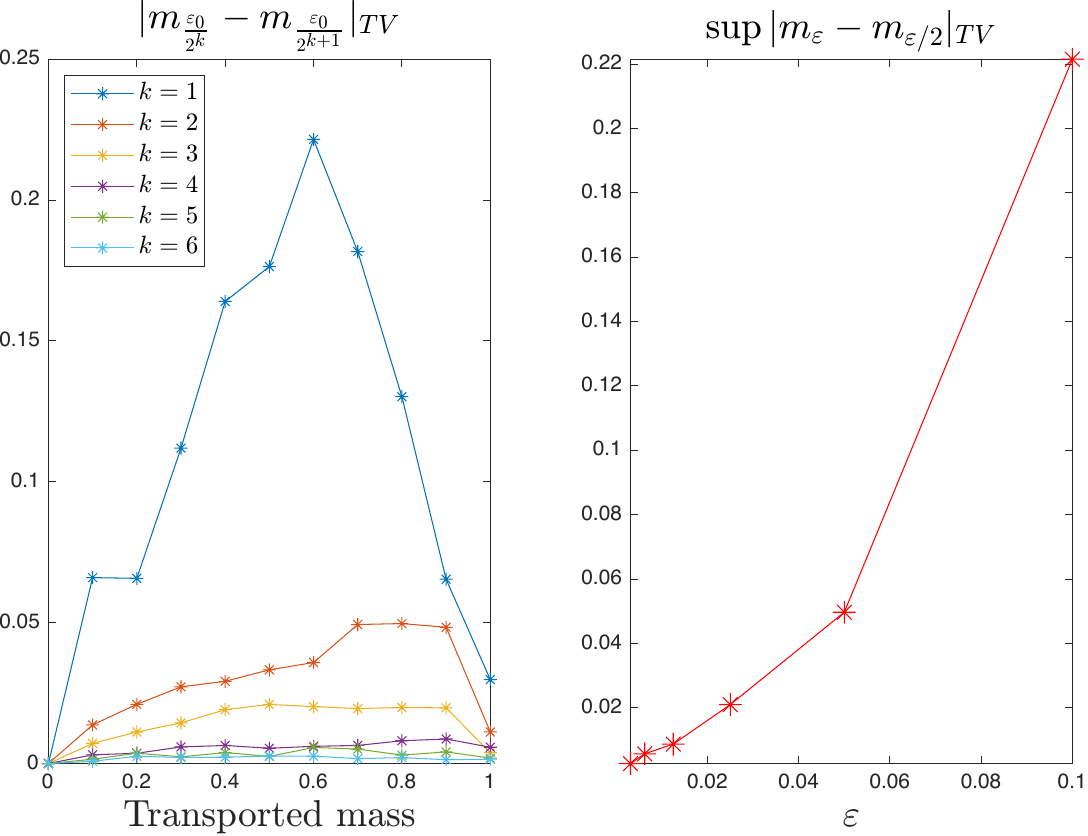} \\
    \end{tabular}
    \caption{Numerical convergence test of the sequence $m^j$ generated by \Cref{alg: main} and \Cref{alg: eikonal} to a gradient flow as the step-size $\varepsilon$ decreases, in the non-linear case.}
    \label{fig:flow_conv_NONlinear}
\end{figure}

Finally, we report the results for the one-dimensional and two-dimensional tests, respectively, in \Cref{tab:1d_NONlinear_sol} and \Cref{tab:2d_NONlinear_sol}, for various choices of $K(x)$. From \Cref{tab:iterations_1d_NONlinear} and \Cref{tab:iterations_2d_NONlinear}, we can see that in this setting the number of iterations needed for convergence is generally higher, compared to the linear case. Nevertheless, both algorithms succeed in finding the $\tau-$Nash equilibrium.

\begin{figure}[H]
    \centering
    \begin{tabular}{c c}
    \Cref{alg: main} & \Cref{alg: eikonal} \\
    \toprule
    \multicolumn{2}{c}{\small $K(x)=4x$} \\
    \includegraphics[width=0.25\linewidth]{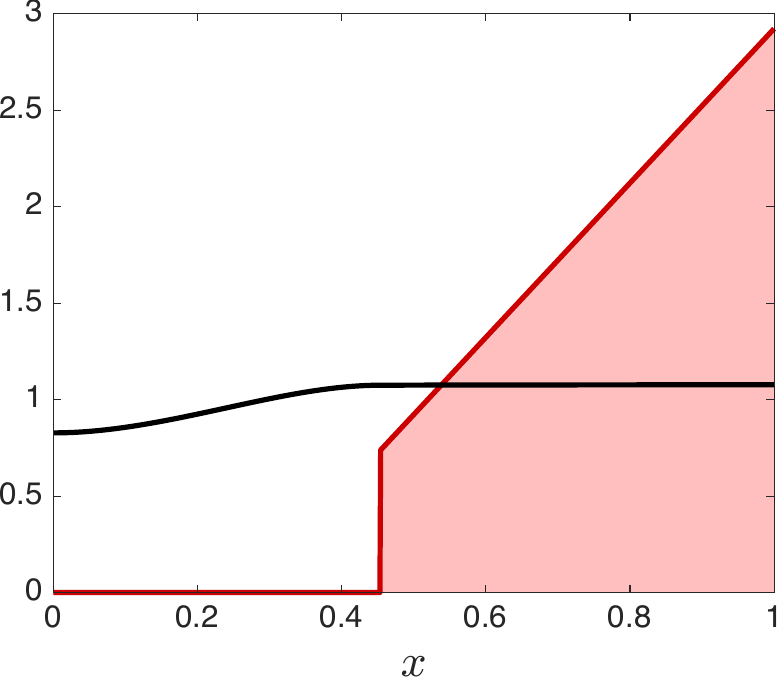} & \includegraphics[width=0.25\linewidth]{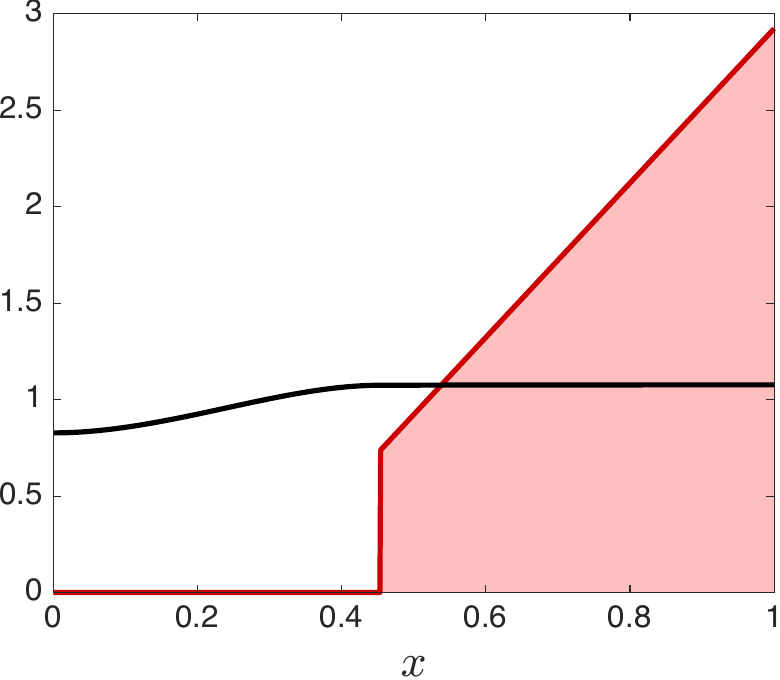} \\
    \multicolumn{2}{c}{\ } \\
    \multicolumn{2}{c}{\small $K(x)=\max(0, 9x\sin(5\pi x))$} \\
    \includegraphics[width=0.25\linewidth]{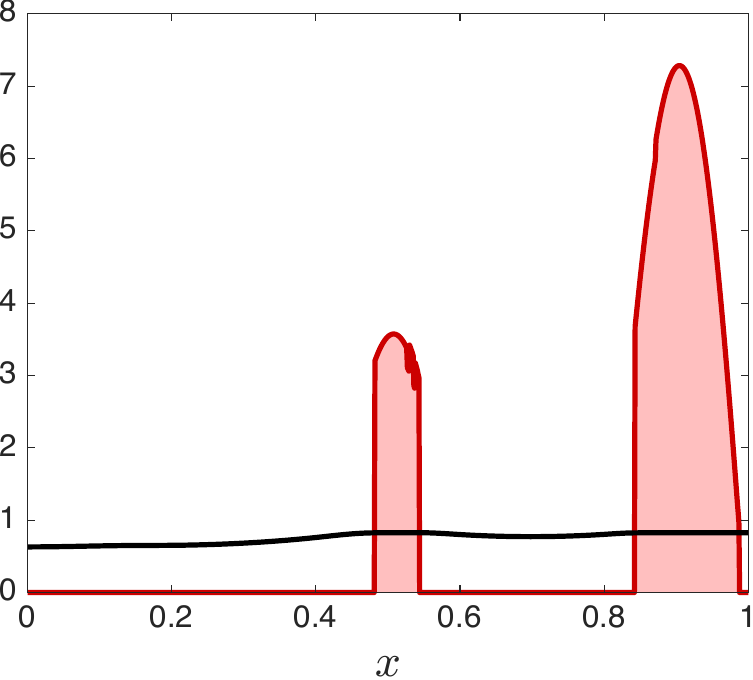} & \includegraphics[width=0.25\linewidth]{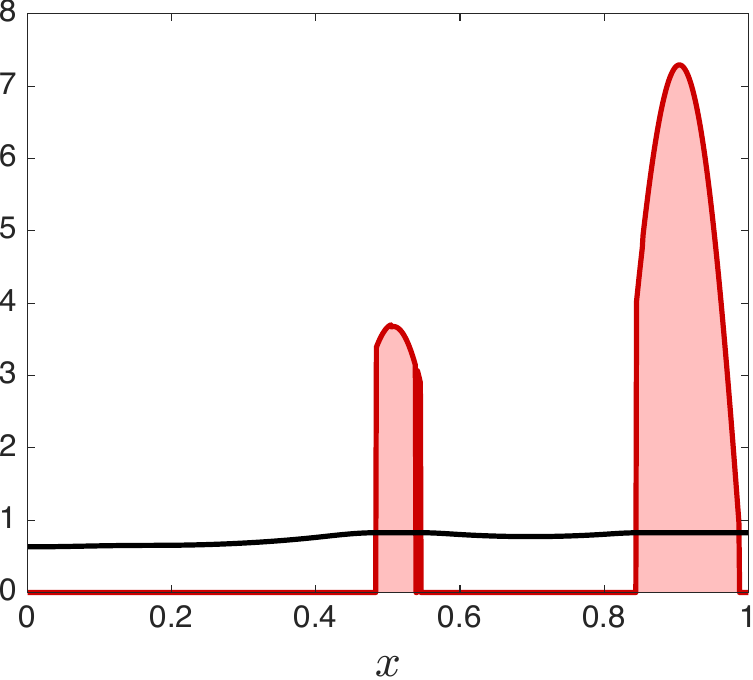} \\
    \multicolumn{2}{c}{\ } \\
    \multicolumn{2}{c}{\small $K(x)=15(\cos(2\pi x)+1)$} \\
    \includegraphics[width=0.25\linewidth]{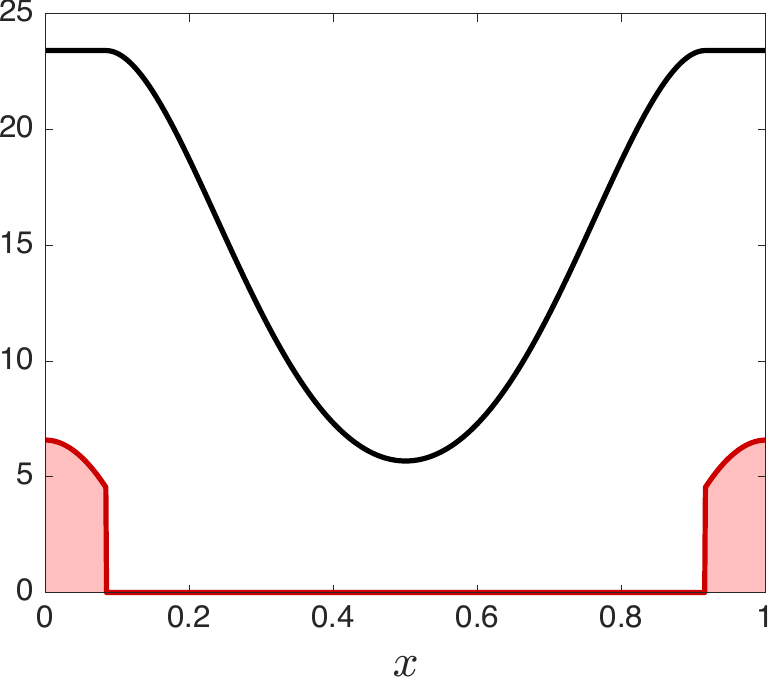} & \includegraphics[width=0.25\linewidth]{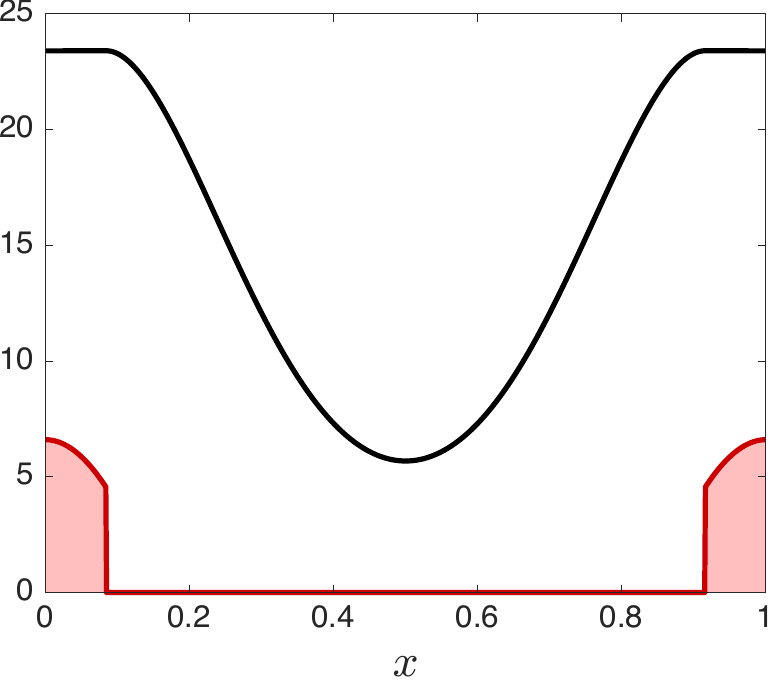} \\
    \bottomrule
    \end{tabular}
    \caption{Solutions in the non linear case given by \Cref{alg: main} (left column) and \Cref{alg: eikonal} (right column) for various choices of $K$ in one dimension. The red curve represents $m(x)$, the black one represents $\theta[m](x)$.}
    \label{tab:1d_NONlinear_sol}
\end{figure}

\begin{table}[H]
    \centering
    \begin{tabular}{r c c}
    \toprule
    \ & \Cref{alg: main} & \Cref{alg: eikonal} \\
    \midrule
    $K(x)=4x$ & 28 iter. & 28 iter. \\ 
    $K(x)=\max(0, 9x\sin(5\pi x))$ & 14 iter. & 24 iter. \\
    $K(x)=15(\cos(2\pi x)+1)$ & 12 iter. & 13 iter. \\
    \bottomrule
    \end{tabular}
    \caption{Number of iterations until convergence for \Cref{alg: main} and \Cref{alg: eikonal} for the simulations in \Cref{tab:1d_NONlinear_sol}.}
    \label{tab:iterations_1d_NONlinear}
\end{table}

\begin{figure}[H]
    \centering
    \begin{tabular}{c c}
    \Cref{alg: main} & \Cref{alg: eikonal} \\
    \toprule
    \multicolumn{2}{c}{\small $K(x,y)=5\exp\left(-\frac{(x-1)^2+(y-1)^2}{0.5}\right)$} \\
    \includegraphics[width=0.3\linewidth]{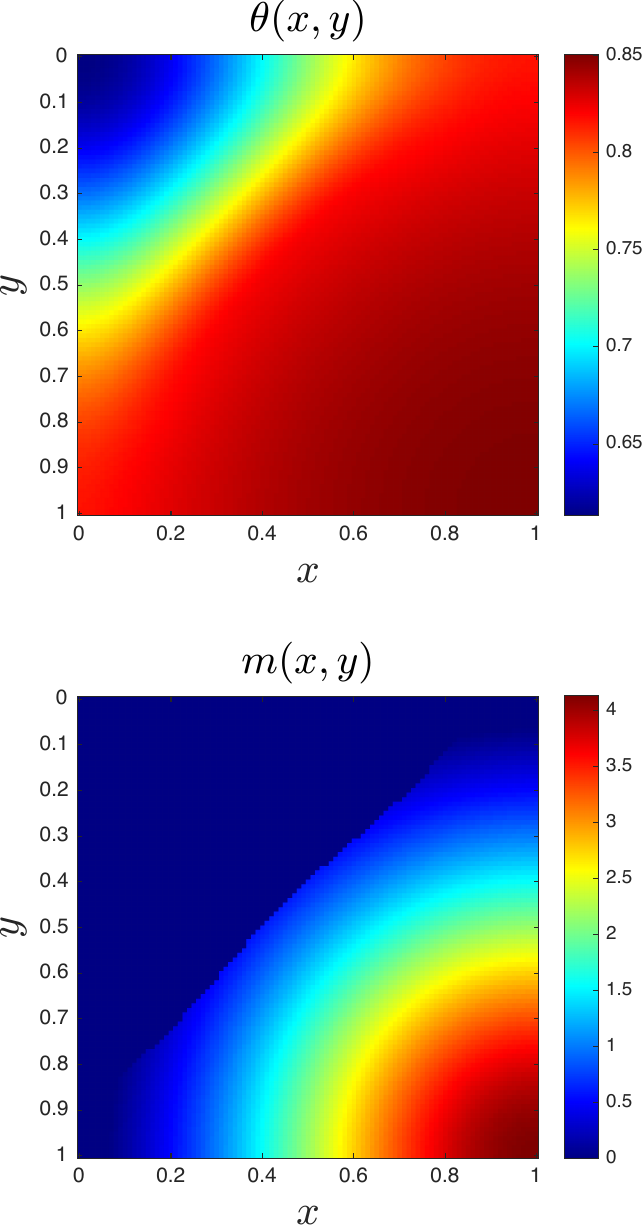} & \includegraphics[width=0.3\linewidth]{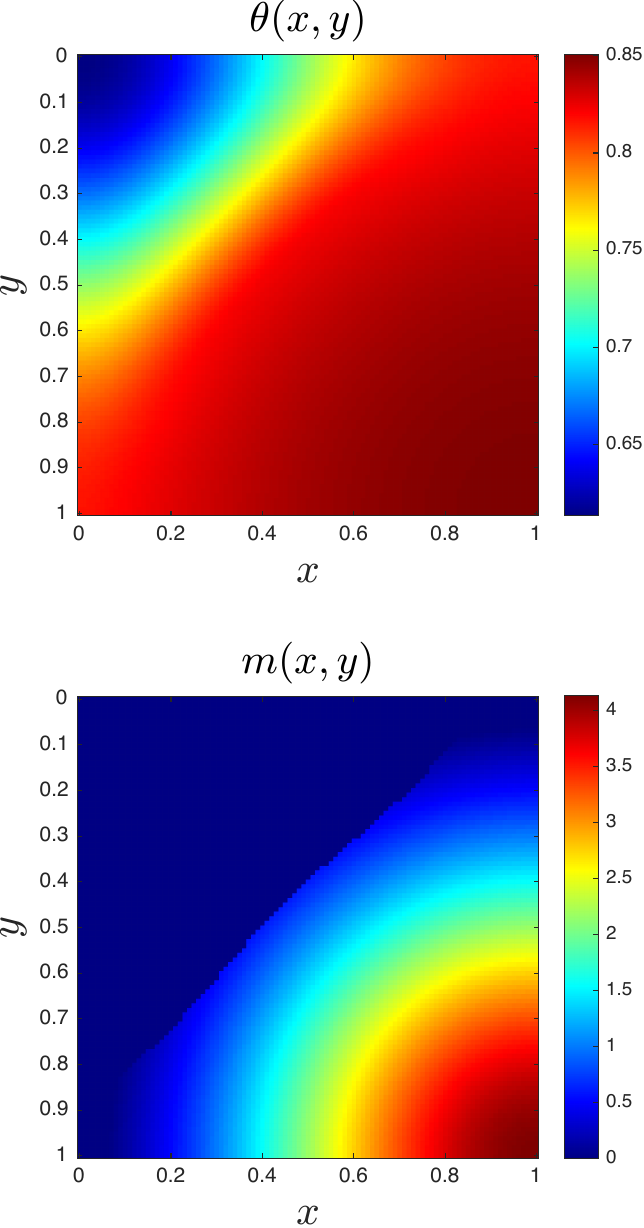} \\
    \multicolumn{2}{c}{\ } \\
    \multicolumn{2}{c}{\small $K(x,y)= \max\left( 0, 4\sum_{i=1}^4 \cos(a_i \pi x)\cos(b_i \pi y) \right), \quad a_i, b_i \sim \mathcal{U}[0,10] $} \\
    \includegraphics[width=0.3\linewidth]{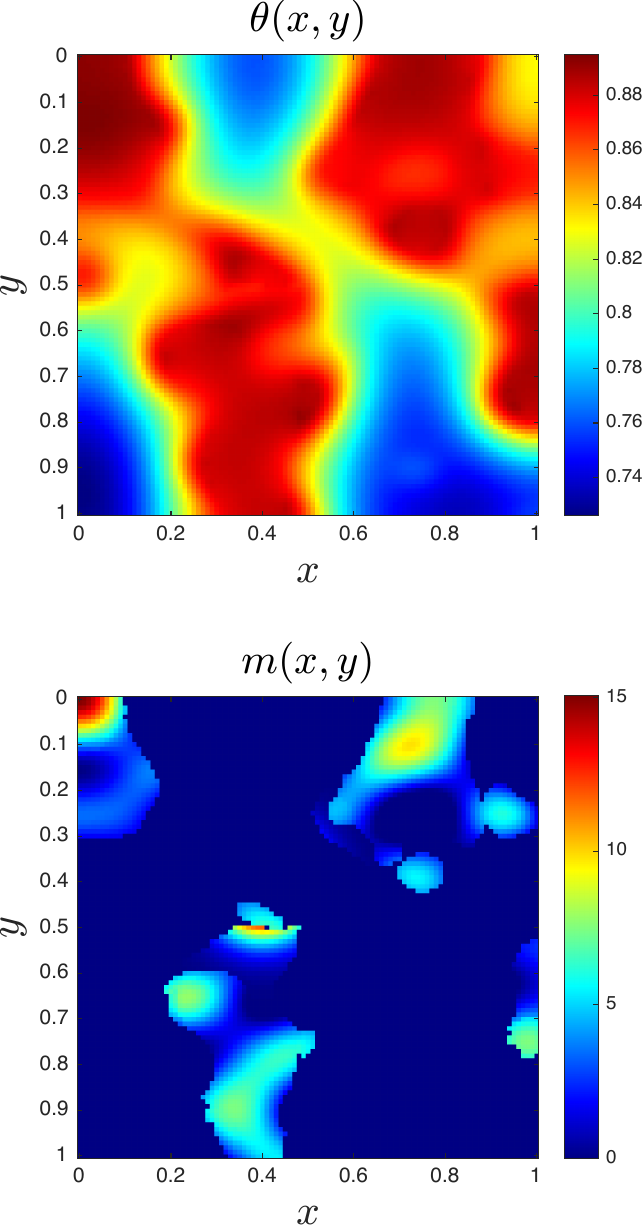} & \includegraphics[width=0.3\linewidth]{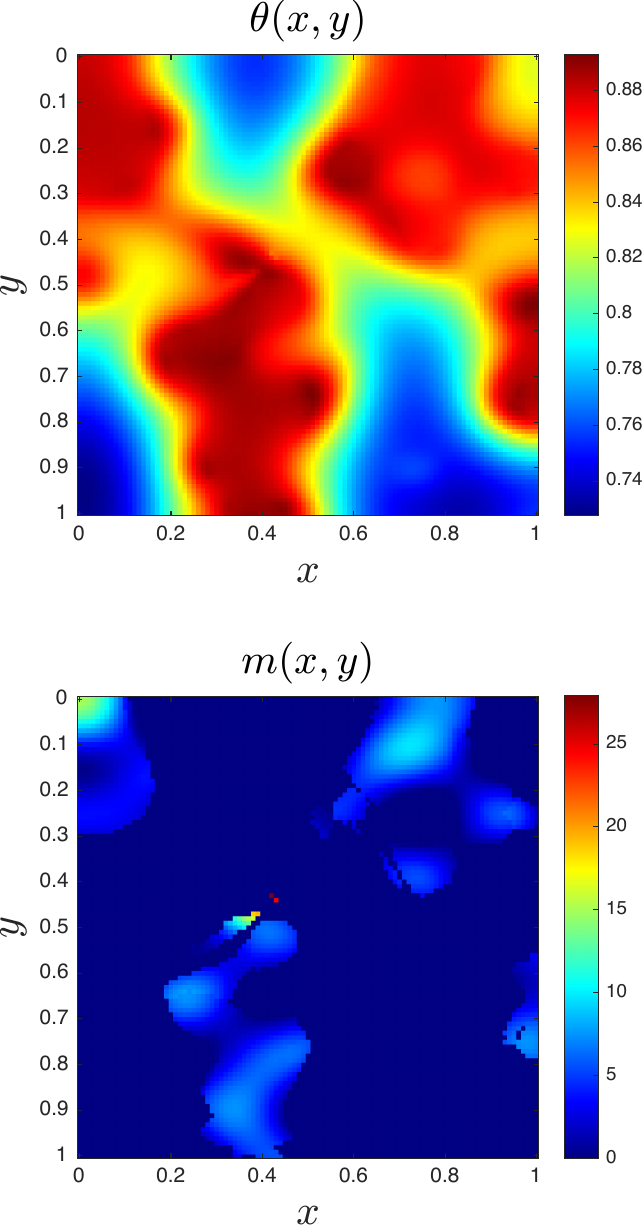} \\
    \bottomrule
    \end{tabular}
    \caption{Solutions for the non linear case given by \Cref{alg: main} (left column) and \Cref{alg: eikonal} (right column) for different choices of $K$ in two dimensions.}
    \label{tab:2d_NONlinear_sol}
\end{figure}

\begin{table}[H]
    \centering
    \begin{tabular}{r c c}
    \toprule
    \ & \Cref{alg: main} & \Cref{alg: eikonal} \\
    \midrule
    {\small $f(x,y)=5\exp\left(-\frac{(x-1)^2+(y-1)^2}{0.5}\right)$} & 31 iter. & 31 iter. \\ 
    {\small \makecell[r]{ $f(x,y)= \max\left( 0, 4\sum_{i=1}^4 \cos(a_i \pi x) \cos(b_i \pi y) \right),$ \\ $a_i, b_i \sim \mathcal{U}[0,10] $}} & 13 iter. & 39 iter. \\
    \bottomrule
    \end{tabular}
    \caption{Number of iterations until convergence for \Cref{alg: main} and \Cref{alg: eikonal} for the simulations in \Cref{tab:2d_NONlinear_sol}. For these numerical tests, the maximum $\varepsilon$ allowed was $0.25$.}
    \label{tab:iterations_2d_NONlinear}
\end{table}

\section{Conclusions and future perspectives}

\bibliographystyle{alpha}
\bibliography{biblio}

\begin{minipage}[t]{.5\textwidth}
{\footnotesize{\bf Dante Kalise}\par
 Department of Mathematics\par
  Imperial College London\par
  Exhibition Rd, South Kensington,\par
  London SW7 2AZ, United Kingdom
 \par
  e-mail: {\scriptsize dante.kalise-balza@imperial.ac.uk}
  }
\end{minipage}
\begin{minipage}[t]{.5\textwidth}
  {\footnotesize{\bf Alessio Oliviero}\par
  MOX, Department of Mathematics\par
   Politecnico di Milano\par
   Via Bonardi, 9\par
   20133 Milan, Italy\par
 \par
  e-mail: {\scriptsize alessio.oliviero@polimi.it}
  }
\end{minipage}%

\begin{center}
\begin{minipage}[t]{.5\textwidth}
  {\footnotesize{\bf Domènec Ruiz-Balet}\par
  Departament de Matemàtiques i Informàtica\par
  Universitat de Barcelona\par
  Gran Via de les Corts Catalanes, 585\par
  08007 Barcelona, Spain\par
 \par
  e-mail: \scriptsize domenec.ruizibalet@ub.edu}
\end{minipage}%
\end{center}

\end{document}